\definecolor{hrefcolor}{rgb}{0.0,0.5,0.8}
\definecolor{hlgreen}{rgb}{0,0.7,0}
\newenvironment{enumroman}
    {
     
     \begin{enumerate}
        \setlength{\leftmargin}{3.0em}
        \setlength{\labelwidth}{2.5em}
        \setlength{\labelsep}{0.5em}
    }
    {\end{enumerate}}
\newcounter{remcount}
\newtheorem{theorem}{Theorem}
\newtheorem{corollary}{Corollary}
\newtheorem{lemma}{Lemma}
\newtheorem{proposition}{Proposition}
\theoremstyle{definition}
\newtheorem{definition}{Definition}
\newtheorem*{assumption*}{Assumption}
\newtheorem{remark}{Remark}
\newtheorem*{remark*}{Remark}
\newtheorem*{definition*}{Definition}
\newtheorem{example}{Example}
\numberwithin{equation}{section}
\numberwithin{lemma}{section}
\numberwithin{theorem}{section}
\numberwithin{proposition}{section}
\numberwithin{definition}{section}
\numberwithin{remark}{section}
\numberwithin{example}{section}
\numberwithin{assumption}{section}
\numberwithin{algorithm}{section}
\numberwithin{corollary}{section}
\newcommand{\term}{\emph}
\newcommand{\field}[1]{\mathbb{#1}}
\newcommand{\N}{\mathbb{N}}
\newcommand{\Z}{\mathbb{Z}}
\newcommand{\R}{\field{R}}
\newcommand{\B}{B}
\newcommand{\norm}[1]{\|#1\|}
\newcommand{\adaptnorm}[1]{\left\|#1\right\|}
\newcommand{\abs}[1]{|#1|}
\newcommand{\adaptabs}[1]{\left|#1\right|}
\newcommand{\grad}{\nabla}
\newcommand{\Union}\bigcup
\newcommand{\Isect}\bigcap
\newcommand{\union}\cup
\newcommand{\isect}\cap
\newcommand{\bigunion}\bigcup
\newcommand{\bigisect}\bigcap
\newcommand{\defeq}{:=}
\newcommand{\downto}{\searrow}
\newcommand{\upto}{\nearrow}
\DeclareMathOperator{\closure}{cl}
\def \uminus@sym{\setbox0=\hbox{$\cup$}\rlap{\hbox 
        to\wd0{\hss\raise0.5ex\hbox{$\scriptscriptstyle{-}$}\hss}}\box0}
    \def \uminus    {\mathrel{\uminus@sym}}
\newcommand{\mathvar}[1]{\textup{#1}}
\renewcommand{\tilde}{\widetilde}
\renewcommand{\ae}{a.e.~}
\newcommand{\iprod}[2]{\langle #1,#2\rangle}
\newcommand{\BVspace}{\mathvar{BV}}
\newcommand{\SBVspace}{\mathvar{SBV}}
\newcommand{\Eabs}{\mathcal{E}}
\newcommand{\Meas}{\mathcal{M}}
\renewcommand{\H}{\mathcal{H}}
\renewcommand{\L}{\mathcal{L}}
\newcommand{\restrict}{\llcorner}
\newcommand{\BD}{\partial}
\renewcommand{\d}{\,d} %\text{d}}
\newcommand{\TGV}{\mathvar{TGV}}
\newcommand{\TV}{\mathvar{TV}}
\DeclareMathOperator{\support}{supp}
\DeclareMathOperator{\divergence}{div}
\newcommand{\weakto}{\mathrel{\rightharpoonup}}
\def \weaktostar@sym{\setbox0=\hbox{$\rightharpoonup$}\rlap{\hbox 
        to\wd0{\hss\raise1ex\hbox{$\scriptscriptstyle{*\,}$}\hss}}\box0}
    \def \weaktostar    {\mathrel{\weaktostar@sym}}
\newcommand{\TODO}[1]{{\sethlcolor{magenta}\color{white}\bfseries\em\hl{#1}}}
\def\TVq{\mathvar{TV}^q}
\def\TVqc{\mathvar{TV}^q_\mathrm{c}}
\def\TVqd{\mathvar{TV}^q_\mathrm{d}}
\def\phi{\varphi}
\newcommand{\TVphi}[1][\phi]{\mathvar{TV}^{#1}}
\newcommand{\TVphic}[1][\phi]{\mathvar{TV}^{#1}_\mathrm{c}}
\newcommand{\TVphics}[1][\phi]{\mathvar{TV}^{#1}_\mathrm{as}}
\newcommand{\TVphid}[1][\phi]{\mathvar{TV}^{#1}_\mathrm{d}}
\def\nonnegR{\R^{0,+}}
\def\pwc{\mathvar{pwc}}
\def\ICTV{\mathvar{ICTV}}
\newcommand{\Wc}{\mathcal{W}_\mathrm{c}}
\newcommand{\Wd}{\mathcal{W}_\mathrm{d}}
\newcommand{\Was}{\mathcal{W}_\mathrm{as}}
\renewcommand{\H}{\mathcal{H}}
\renewcommand{\d}{\,d} %\mathrm{d}}
\DeclareMathOperator{\supp}{supp}
\renewcommand{\closure}[1]{\overline{#1}}
\def\ee{\text{\textsc{e}}}
\title{Limiting aspects of non-convex $\TVphi$ models}
\author{%
    Michael Hinterm\"uller\thanks{
        Institute for Mathematics,
        Humboldt University of Berlin, 
        Germany.
        \\
        E-mail: \texttt{hint@math.hu-berlin.de}
    },
    %\and%
    \ %
    Tuomo Valkonen\thanks{
        Department of Applied Mathematics and Theoretical Physics,
        University of Cambridge, United Kingdom.
        \\
        E-mail: \texttt{tuomo.valkonen@iki.fi}.
    },\ \!\!${ }^\text{, \Letter}$,
    \ %
    and
    \ %
    %\and
    Tao Wu\thanks{
        Institute for Mathematics and Scientific Computing,
        University of Graz, 
        Austria.
        \\
        E-mail: \texttt{tao.wu@uni-graz.at}
    }
}
\begin{document}

\maketitle

\begin{abstract}
    Recently, non-convex regularisation models have been introduced
    in order to provide a better prior for gradient distributions 
    in real images. They are based on using
    concave energies $\phi$ in the total variation type functional
    $\TV^\phi(u) \defeq \int \phi(|\nabla u(x)|) \d x$.
    In this paper, it is demonstrated that for typical choices of $\phi$, functionals
    of this type pose several difficulties when extended to the entire
    space of functions of bounded variation, $\BVspace(\Omega)$. 
    In particular, if $\phi(t)=t^q$ for $q \in (0, 1)$ and $\TV^\phi$ 
    is defined directly for piecewise constant functions
    and extended via weak* lower semicontinuous envelopes
    to $\BVspace(\Omega)$, then still $\TV^\phi(u)=\infty$
    for $u$ not piecewise constant. If, on the other hand,
    $\TV^\phi$ is defined analogously via continuously 
    differentiable functions, then $\TV^\phi \equiv 0$, (!).
    We study a way to remedy the models through
    additional multiscale regularisation and area strict
    convergence, provided that the energy $\phi(t)=t^q$ is 
    linearised for high values. The fact, that this kind of energies
    actually better matches reality and improves reconstructions,
    is demonstrated by statistics and numerical experiments.
    
    \paragraph{Mathematics subject classification:}
        26B30,  % Absolutely continuous functions, functions of bounded variation
        49Q20,  % Variational problems in a geometric measure-theoretic setting
        65J20.  % Improperly posed problems; regularization. (65Jxx Numerical analysis in abstract spaces)

    \paragraph{Keywords:} 
        total variation,
        non-convex,
        regularisation,
        area-strict convergence,
        multiscale analysis.
    
\end{abstract}

\section{Introduction}

Recently introduced non-convex total variation models are based on employing 
concave energies $\phi$, in discrete versions of functionals of the form
\begin{equation}
    \label{eq:tvqc}
    \TVphic(u) \defeq \int_\Omega \phi(|\grad u(x)|) \d x,
    \quad (u \in C^1(\Omega)),
\end{equation}
which we call the \term{continuous model}, or
\begin{equation}
    \label{eq:tvqd}
    \TVphid(u) \defeq \int_{J_u} \phi(\abs{u^+(x)-u^-(x)}) \d \H^{m-1}(x),
    \quad (u \text{ piecewise constant}),
\end{equation}
which we call the \term{discrete model}. Here $\Omega \subset \R^m$ is our image domain, 
and $J_u$ is the jump set of $u$, where the one-sided traces $u^\pm$ from different
sides of $J_u$ differ.
The typical energies include, in particular, $\phi(t)=t^q$ for $q \in (0, 1)$.
The models based on discretisations of \eqref{eq:tvqd} have been proposed for
the promotion of piecewise constant (cartoon-like) images
\cite{geman1984stochastic,nikolova2002minimizers,nikolova2008efficient,chen2010smoothing},
whereas models based on discretisations of \eqref{eq:tvqc} have been proposed for
the better modelling of gradient distributions in real-life images
\cite{huang1999statistics,HiWu13_siims,HiWu14_coap,ochsiterated}. 
To denoise an image $z$, one may then solve the nonconvex Rudin-Osher-Fatemi type problem
\begin{equation}
    \label{eq:tvq-rof}
    \min_u \frac{1}{2} \norm{z-u}^2 + \alpha \TVphi(u)
\end{equation}
for $\TVphi=\TVphic$ or $\TVphi=\TVphid$.
Observe that \eqref{eq:tvqc} is only defined rigorously for differentiable 
functions. In contrast to \eqref{eq:tvqd}, it is in particular not defined for
piecewise constant discretisations, or images with discontinuities. 
The functional has to be extended to the whole space of functions of bounded
variation denoted by $\BVspace(\Omega)$, see \cite{giusti1984minimal} for its definition, in order to obtain a sound model in the
non-discretised setting. Alternatively, we may take \eqref{eq:tvqd}, defined 
for piecewise constant functions, as the basis and extend it to continuous 
functions. We will study the extension of both models
\eqref{eq:tvqc} and \eqref{eq:tvqd} to $\BVspace(\Omega)$. We demonstrate 
that \eqref{eq:tvqc} in particular has severe theoretical difficulties 
for typical choices  of $\phi$. We also demonstrate that some of these
difficulties can be overcome by altering the model to better match reality,
although we also need additional multiscale regularisation in the model for theoretical purposes.

%\paragraph{Discrete model}
Let us consider the discrete model $\TVphid$ first.
We assume that we have a regularly spaced grid $\Omega_h \subset \Omega \isect h\Z^m$,
($h > 0$), and a function $u_h: \Omega_h \to \R$. By $\{e_i\}_{i=1}^m$ we denote the canonical
orthonormal basis of $\R^m$. Then we identify $u_h$ with a function $u$ that
is constant on each cell $k + [0, h]^m$, ($k \in \Omega_h$). Accordingly, we have
\begin{equation}
    \label{eq:tvq-disc}
    \TVphid(u_h) \defeq \sum_{k \in \Omega_h} \sum_{i=1}^m h^{m-1} \phi(|u_h(k+e_i)-u_h(k)|).
\end{equation}
This discrete expression with $h=1$ is essentially what is studied 
in \cite{nikolova2002minimizers,nikolova2008efficient,chen2010smoothing},
although \cite{nikolova2002minimizers} studies also more general discrete models.
In the function space setting, this model has to be extended to all of $\BVspace(\Omega)$,
in particular to smooth functions. The extension naturally has to be lower 
semicontinuous in a suitable topology, in order to guarantee the existence of 
solutions to \eqref{eq:tvq-rof}. 
Therefore, one is naturally confronted with the question whether such an 
extension can be performed meaningfully?

Let us consider a simple motivating example on $\Omega=(0, 1)$ with $\phi(t)=t^q$ 
for $q \in (0, 1)$. We aim to approximate the ramp function
\[
    u(t)=t
\]
by piecewise constant functions. Given $k > 0$, we thus define
\[
    u^k(t)=
    i/k, \quad \text{for~}t \in [(i-1)/k, i/k) \text{~and~}i\in\{1,\ldots,k\}.
%    \begin{cases}
%        i/k, & t \in [(i-1)/k, i/k),\, (i=1,\ldots,k), \\
%    \end{cases}
\]
Clearly %$u^k \to u$ in $L^1(\Omega)$.
we have that $u^k$ converges strongly to $u$ in $L^1(\Omega)$.
Using the discrete model \eqref{eq:tvq-disc} with $h=1/k$, one has
\[
    \TVqd(u^k)=\sum_{i=1}^k (1/k)^0 \cdot (1/k)^q = k^{1-q}.
\]
We see that $\lim_{k \to \infty} \TVqd(u^k) = \infty$! This suggests that the $\TVq$ 
model based on the 
discrete functional might only allow piecewise constant functionals. In other words,
$\TVqd$ would induce pronounced staircasing -- a property desirable when restoring
piecewise constant images, but less suitable for other applications. 
In Section \ref{sec:tvphid}, we will indeed demonstrate that either $u$ is piecewise constant, 
or $u \not\in \BVspace(\Omega)$.

In order to highlight the inherent difficulties, let us then consider the continuous model $\TVphic$, directly given by \eqref{eq:tvqc} 
for differentiable functions. In particular, \eqref{eq:tvqc} also serves as a definition 
of $\TVphic$ for continuous piecewise affine discretisations of $u \in C^1(\Omega)$.
%If we have a discretisation $u_h: \Omega_h \to \R$, we can of course get a function
%$u \in C^1(\Omega)$ by triangulating on the vertices $\Omega_h$. We do not go into
%the details. Instead, 
We observe that  if $u \in C^1(\closure \Omega)$ on a bounded
domain $\Omega$, and we set $u_h(k)=u(k)$ for $k \in \Omega_h$, then
\begin{equation}
    \label{eq:tvq-cont-disc}
    \TVphi_{\mathrm{c},h}(u_h) \defeq \sum_{k \in \Omega_h} h^m \phi(|\grad_h u_h(k)|),
    \quad
    \text{with}
    \quad
    [\grad_h u_h(k)]_i \defeq \bigl(u_h(k+e_i)-u_h(k)\bigr)/h
\end{equation}
satisfies
\[
    \lim_{h \downto 0} \TVphi_{\mathrm{c},h}(u_h) = \TVphic(u).
\]
This approximate model $\TVphi_h$ with $h=1$ is essentially what is considered
in \cite{HiWu14_coap,HiWu13_siims,ochsiterated}. On an abstract level, it is also 
covered by \cite{nikolova2002minimizers}. The question now is whether the definition 
of $\TVphic$ can be extended to functions of bounded variation in a meaningful manner.

To start our investigation, let us try to approximate on $\Omega=(-1, 1)$ 
the step function
\[
    u(t)=\begin{cases}
            0, & t<0, \\
            1, & t \ge 0.
         \end{cases}
\]
Given $k>0$, we define
\[
    u^k(t)=
    \begin{cases}
            0, & t < -1/k, \\
            1, & t \ge 1/k, \\
            \frac{1}{2}(kt+1), & t \in [-1/k, 1/k).
    \end{cases}
\]
Then $u^k \to u$ in $L^1(\Omega)$. However, the continuous model \eqref{eq:tvqc} 
with $\phi(t)=t^q$ for $q \in (0, 1)$ gives
\[
    \TVqc(u^k) = (2/k)^{q-1}.
\]
Thus $\TVqc(u^k) \to 0$ as $k \upto \infty$. This suggests that any extension 
of $\TVqc$ to $u \in \BVspace(\Omega)$ through weak* lower semicontinuous 
envelopes will have $\TVqc(u)=0$, and that jumps in
general will be free. In Section \ref{sec:tvphic} we will prove
this and something more striking. A weak* lower semicontinuous extension
will necessary satisfy $\TVqc \equiv 0$. 

Despite this discouraging property, after discussing the implications
of the above-mentioned results in Section \ref{sec:discuss},
we find appropriate remedies. Our associated principal approach is given in
Section \ref{sec:areastrict}. It utilizes the (stronger) notion
of \term{area-strict convergence} \cite{delladio1991lower,kristensen2010relaxation},
which -- as will be shown -- can be obtained using the multiscale analysis
functional $\eta$ from \cite{tuomov-bd,tuomov-ap1}. In
Section \ref{sec:alternative} we also discuss alternative remedies which are related to
compact operators and the space $\SBVspace(\Omega)$
of special functions of bounded variation. 
In order to keep the flow of the paper, the pertinent proofs are relegated to the Appendix. 

To show existence of solutions to the fixed $\TVphic$ model involving
area-strict convergence, we require that $\phi$ is level coercive,
i.e.~$\lim_{t \to \infty} \phi(t)/t  > 0$. This induces a linear
penalty to edges in the image. %Also the SBV model
%penalises edges differently from the smooth part of the images.
Based on these considerations, one arrives at the question 
whether gradient statistics, such as the ones in \cite{huang1999statistics},
are reliable in dictating the prior term (regularizer).
%This begs the question: do the gradient statistics of 
%\cite{huang1999statistics} tell the whole truth? 
Our experiments
on natural images in Section \ref{sec:stat} suggest that this is not the case. In fact, 
the jump part of the image appears to have different statistics from the smooth part. 
It seems that the
conventional $\TV$ regularization \cite{Rud1992} provides a model for the jump part, which is superior to the nonconvex TV-model.
This statistically validates our model, which is also suitable for a function space setting.
%fixed to work theoretically correctly in the function space setting. 
Our rather theoretical
starting point of making the $\TVphic$ model sound in function
space therefore leads to improved practical models.
Finally, in Section \ref{sec:numerical} we
study image denoising with this model, and finish with conclusions
in Section \ref{sec:concl}. We however begin with notation
and other preliminary matters in the following Section \ref{sec:prelim}.

%\paragraph{Remedies}
%Everything is not lost, however. The problems with both of the models
%depend on the behaviour of $\phi(t)/t$ as either $t \downto 0$ or
%$t \upto \infty$. Problems with $\TVphid$ in particular are avoided
%by Huber regularisation is employed, while problems with $\TVphic$
%can be avoided by altering the behaviour of $\phi$ at infinity.
%We will also provide such positive results at the ends of
%Section \ref{sec:tvphid} an Section \ref{sec:tvphic}
%for $\TVphid$ and $\TVphic$ respectively. 
%We will further discuss the results and their implications 
%in Section \ref{sec:discuss} after our analytical study
%of the models. %We also discuss higher-order extensions
%%in Section \ref{sec:tgvq}. 
%First we however have to introduce
%and recall some notation and concepts in the 
%next Section \ref{sec:prelim}.
%It is, finally, worth noting that the $\TVphi$ prior only attempts to correctly 
%model gradient statistics; the modelling of histogram statistics with 
%Wasserstein distances was recently studied in \cite{rabin2011wasserstein,swoboda2013histogram} 
%together with the conventional $\TV$ gradient prior.

%Some of the above difficulties are, at least in principle,
%avoided by higher-order non-convex models based on the total 
%generalised variation regularisation functional \cite{bredies2009tgv}.
%We will discuss such approaches in Section \ref{sec:tgvq} after
%the study of $\TVq$ models. 

\section{Notation and preliminaries}
\label{sec:prelim}

We write the boundary of a set $A$ as $\BD A$, and the 
closure as $\closure A$. The open ball of radius
$\rho$ centred at $x \in \R^m$ is denoted by $\B(x, \rho)$. 

We denote the set of non-negative reals as $\nonnegR \defeq [0, \infty)$.
If $\phi: \nonnegR \to \nonnegR$, then we write
\[
    \phi_0 \defeq \lim_{t \downto 0} \phi(t)/t,
    \quad
    \text{and}
    \quad
    \phi^\infty \defeq \lim_{t \upto \infty} \phi(t)/t,
\]
implicitly assuming that the (possibly infinite) limits exist.

For $\Omega \subset \R^m$, we denote the space of (signed)
Radon measures on $\Omega$ by $\Meas(\Omega)$, and the space
of $\R^m$-valued Radon measures by $\Meas(\Omega; \R^m)$. 
We use the notation $\abs{\mu}$ for the \term{total variation measure}
of $\mu \in \Meas(\Omega; \R^m)$, and define the \term{total variation 
(Radon) norm} of $\mu$ by
\[
    \norm{\mu}_{\Meas(\Omega; \R^m)} \defeq \abs{\mu}(\Omega).
\]
For a measurable set $A$, we denote by $\mu \restrict A$ the restricted
measure defined by $(\mu \restrict A)(B) \defeq \mu(A \isect B)$.
The restriction of a function $u$ to $A$ is denoted by $u|A$.
On any given ambient space $\R^m$, ($k \le m$), we write $\H^k$ for the
$k$-dimensional \term{Hausdorff measure}, and $\L^m$ for 
the \term{Lebesgue measure}.

If $J \subset \R^m$ %satisfies $\H^{m-1}(J) < \infty$ 
and there exist Lipschitz maps $\gamma_i: \R^{m-1} \to \R$ with
\[
    \H^{m-1}\left(J \setminus \Union_{i=1}^\infty \gamma_i(\R^{m-1})\right)=0,
\]
then we say that $J$ is \term{countably $\H^{m-1}$-rectifiable}.

We say that a function $u: \Omega \to \R$ on an open domain $\Omega \subset \R^m$ 
is of \term{bounded variation} (see, e.g., \cite{ambrosio2000fbv}
for a thorough introduction), denoted $u \in \BVspace(\Omega)$, 
if $u \in L^1(\Omega)$, and the distributional gradient $D u$, given by
\[
    Du(\phi) \defeq \int_\Omega \divergence \phi(x) u(x) \d x,
    \quad
    (\phi \in C_c^\infty(\Omega)),
\]
is a Radon measure, i.e.~$|Du|$ is finite. In this case, we can decompose $Du$ into
\[
    Du = \grad u \L^n + D^j u + D^c u,
\]
where $\grad u \L^n$ is called the \term{absolutely continuous part},
$D^j u$ the \term{jump part}, and $D^c u$ the \term{Cantor part}.
We also denote the \term{singular part} by
\[
    D^s u \defeq D^j u + D^c u 
\]
The density $\grad u \in L^1(\Omega; \R^m)$ corresponds to the
classical gradient if $u$ is differentiable. The jump part may be
written as
\begin{equation}
    %\label{eq:ejump}
    \notag
    D^j u = (u^+ - u^-) \otimes \nu_{J_u} \H^{m-1} \restrict J_u,
\end{equation}
where the \term{jump set} $J_u$ is countably $\H^{m-1}$-rectifiable,
$\nu_{J_u}(x)$ is its normal, and $u^+$ and $u^-$ are one-sided
traces of $u$ on $J_u$. The remaining \term{Cantor part} $D^c u$ 
vanishes on any Borel set which is $\sigma$-finite with respect to $\H^{m-1}$;
in particular $\abs{D^c u}(J_u)=0$.
We declare $u$ an element of the space $\SBVspace(\Omega)$ of
\term{special functions of bounded variation}, if $u \in \BVspace(\Omega)$
and $D^c u = 0$. 

We define the norm
\[
    \norm{u}_{\BVspace(\Omega)} \defeq \norm{u}_{L^1(\Omega)}
        + \norm{D u}_{\Meas(\Omega; \R^m)},
\]
and also denote
\[
    \TV(u) \defeq \norm{D u}_{\Meas(\Omega; \R^m)}.
\]
We say that a sequence $\{u^i\}_{i=1}^\infty \subset \BVspace(\Omega)$, 
\term{converges weakly*} to $u$ in $\BVspace(\Omega)$, denoted by
$u^i \weaktostar u$, if $u^i \to u$ strongly in $L^1(\Omega)$ and
$D u^i \weaktostar D u$  weakly* in $\Meas(\Omega; \R^m)$. 
If in addition $\abs{D u^i}(\Omega) \to \abs{D u}(\Omega)$, 
we say that the convergence is \term{strict}.

\section{Limiting aspects of the discrete $\TVphi$ model}
\label{sec:tvphid}

We begin by rigorously defining and analysing
the discrete $\TVphi$ model \eqref{eq:tvqd} in $\BVspace(\Omega)$. This model is used in
the literature to promote piecewise constant solutions to 
image reconstruction problems. For our analysis we consider
the following class of energies $\phi$.

\begin{definition}
    Define $\Wd$ as the set of increasing, lower semicontinuous,
    subadditive functions  $\phi: \nonnegR \to \nonnegR$ 
    that satisfy $\phi(0)=0$ and $\phi_0=\infty$.
\end{definition}

%Observe that sub-additivity and $\phi(0)=0$ imply that
%$\phi$ is increasing.

\begin{example}
    Examples of $\phi \in \Wd$ include $\phi(t)=t^q$ for $q \in [0, 1)$.
\end{example}

%\TODO{more?}

% concave => subadditive

\begin{definition}
    Denote by $\pwc(\Omega)$ the set of functions
    $u \in \BVspace(\Omega)$ that are piecewise constant 
    in the sense $D u = D^j u$. We then write
    $\abs{D^j u} = \theta_u \H^{m-1} \restrict J_u$.
\end{definition}

\begin{definition}
Given an energy $\phi \in \Wd$, the ``discrete''
non-convex total variation model is defined by
\[
    \widetilde{\TVphid}(u) \defeq \int_{J_u} \phi(\theta_u(x)) \d \H^{m-1}(x),
    \quad
    (u \in \pwc(\Omega)),
\]
and extend this to $u \in \BVspace(\Omega)$ by defining
\[
    \TVphid(u) \defeq \liminf_{\substack{u^i \weaktostar u, \\ u^i \in \pwc(\Omega)}} \widetilde{\TVphid}(u^i),
\]
with the convergence weakly* in $\BVspace(\Omega)$, in order to obtain
a weak* lower semicontinuous functional.
\end{definition}

The functional $\widetilde{\TVphid}$ in particular agrees with \eqref{eq:tvq-disc}.
Our main result regarding this model is the following.

\begin{theorem}
    \label{thm:tvphid}
    Let $\phi \in \Wd$. 
    %, and suppose that $\Omega \subset \R^m$ is bounded with Lipschitz boundary. 
    Then
    \[
        \TVphid(u)=\infty \quad \text{ for } \quad u \in \BVspace(\Omega) \setminus \pwc(\Omega).
    \]
\end{theorem}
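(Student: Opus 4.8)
The plan is to show that if $u \in \BVspace(\Omega) \setminus \pwc(\Omega)$ and $\{u^i\} \subset \pwc(\Omega)$ with $u^i \weaktostar u$, then $\liminf_i \widetilde{\TVphid}(u^i) = \infty$. The key structural observation is that since $u$ is not piecewise constant, either the absolutely continuous part $\grad u \L^m$ or the Cantor part $D^c u$ is nonzero; in either case $Du$ puts mass on a set that is ``spread out'' relative to $\H^{m-1}$, whereas each $u^i$ concentrates all its derivative mass on the rectifiable jump set $J_{u^i}$. First I would reduce, by weak* convergence and a localisation argument, to producing a contradiction on a small ball $B = \B(x_0, \rho)$ on which $\abs{Du}(B) > 0$ but $\abs{D^j u}(B) = 0$ (possible because $\grad u \L^m + D^c u$ is a nonzero measure vanishing on $\H^{m-1}$-$\sigma$-finite sets, so it is not supported on any countably rectifiable set). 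On such a ball the total variation of $u$ is achieved with no jump contribution, yet the approximating $u^i|B$ must carry comparable total variation entirely through jumps.

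The heart of the matter is a quantitative lower bound: for a piecewise constant function $v$ on a cube (or ball) $Q$ of sidelength $r$, one has
\[
    \int_{J_v \isect Q} \phi(\theta_v) \d \H^{m-1} \ge c(m)\, r^{m-1}\, \phi\!\left(\frac{\abs{Dv}(Q)}{c(m) r^{m-1}}\right)
\]
or a similar estimate, which I would derive by slicing: integrating over lines parallel to a coordinate axis, each one-dimensional slice $v_y$ is piecewise constant with a finite set of jumps, and subadditivity of $\phi$ together with $\phi_0 = \infty$ forces that splitting a given total jump height into many small jumps is extremely costly — specifically, if the slice has total variation $\sigma$ realised over $N$ jumps then $\sum \phi(\text{jump heights}) \ge \phi_0'$-type blow-up unless $N$ is controlled, and $N$ cannot be controlled because the limit $u$ has its derivative mass spread at scale $\to 0$. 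More precisely, area/coarea slicing reduces the $m$-dimensional estimate to the one-dimensional fact: if $w$ on an interval is piecewise constant, close in $L^1$ to an affine (or Cantor-type) function of total variation $\sigma > 0$, then $w$ must have many jumps whose $\phi$-sum tends to infinity, which is exactly the ramp computation in the introduction made rigorous and uniform.

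The main obstacle is making the slicing argument genuinely uniform over the whole approximating sequence and over the (a priori unknown) jump geometry of the $u^i$: one must show that $L^1$-closeness of $u^i$ to a non-piecewise-constant $u$ on $B$ really does force the slices of $u^i$ to have diverging $\phi$-cost, not merely large total variation. I would handle this by fixing a threshold $\epsilon > 0$, choosing $\rho$ small so that $u$ restricted to $B(x_0,\rho)$ has oscillation comparable to $\abs{Du}(B)/\rho^{m-1}$ but in a way that no single level set dominates (using that $\abs{D^j u}(B)=0$ means $u$ has no approximate jump on $B$, so its one-dimensional slices are non-piecewise-constant $\BVspace$ functions for $\H^{m-1}$-a.e.\ line), then applying Fatou/Fubini: for $\H^{m-1}$-a.e.\ slicing line the slice $u^i_y \to u_y$ in $L^1$, $u_y$ is not piecewise constant, so $\widetilde{\TVphid}(u^i_y) \to \infty$ along the slice by the one-dimensional case; integrating via Fatou over the slicing lines gives $\widetilde{\TVphid}(u^i) \to \infty$. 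The one-dimensional base case itself follows from subadditivity and $\phi_0 = \infty$: if a piecewise constant $w^i \to w_y$ in $L^1$ with $w_y \in \BVspace \setminus \pwc$, then the number of jumps of $w^i$ must diverge (otherwise a subsequence converges to a piecewise constant limit), and $\sum_j \phi(\text{height}_j) \ge \phi(\sum_j \text{height}_j)$ is the wrong direction — instead one uses that for the jumps to sum in $L^1$-sense to a continuous increment, infinitely many have height $\to 0$, and $\phi(t)/t \to \infty$ makes each contribute disproportionately, forcing divergence. Pinning down this last step cleanly — perhaps by a compactness/contradiction argument rather than an explicit estimate — is where I expect the real work to lie.
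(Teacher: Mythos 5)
Your overall strategy --- slice to one dimension and use $\phi_0=\infty$ to show that piecewise constant approximants of a function with nonzero diffuse derivative must have diverging $\phi$-cost --- is viable, but note that it amounts to reproving by hand the tool the paper actually invokes, namely the SBV compactness/closure theorem (Theorem \ref{thm:sbv-compactness}). The paper's proof is three lines: along any sequence $u^i \in \pwc(\Omega)$ with $u^i \weaktostar u$ and $\sup_i \widetilde{\TVphid}(u^i) < \infty$, that theorem (applied with $\psi=\phi$, which has $\psi_0=\infty$) gives $\grad u^i \weakto \grad u$ and $D^j u^i \weaktostar D^j u$; since $\grad u^i \equiv 0$ and $D^c u^i = 0$, this forces $\grad u = 0$ and $D^c u = 0$, i.e.\ $u \in \pwc(\Omega)$. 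There is nothing wrong with wanting a self-contained slicing proof, but as written your proposal has concrete gaps.

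First, the localisation step fails: a ball $B$ with $\abs{Du}(B)>0$ but $\abs{D^j u}(B)=0$ need not exist. That $\grad u\,\L^m + D^c u$ is not concentrated on a rectifiable set does not prevent $J_u$ from being dense; for instance $u(x)=x_1+\sum_k 2^{-k}H(x_1-q_k)$ with $\{q_k\}$ dense and $H$ the Heaviside function has $\abs{D^j u}(B)>0$ for every ball. Fortunately the step is unnecessary: slicing $\Omega$ directly in a suitable coordinate direction, a positive-measure set of lines yields slices $u_y$ with nonzero diffuse derivative, which is all you need. Second, the displayed bound $c(m)\,r^{m-1}\phi\bigl(\abs{Dv}(Q)/(c(m)r^{m-1})\bigr)$ is a finite quantity for fixed $r$ and cannot produce divergence; it is not the mechanism. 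Third --- and this is the real content, which you explicitly leave open --- the one-dimensional base case. The clean argument is the contrapositive: if the $w^i$ are piecewise constant on an interval, $w^i\to w$ in $L^1$ with $\liminf_i\abs{Dw^i}<\infty$ and $\sum_j\phi(h^i_j)\le M$, then for every $\delta>0$ the number of jumps of height $\ge\delta$ is at most $M/\phi(\delta)$, while $\phi_0=\infty$ bounds the total mass of the jumps of height $<\delta$ by $M/C(\delta)$ with $C(\delta)\upto\infty$ as $\delta\downto 0$; a weak* limit of measures consisting of boundedly many atoms plus a part of mass at most $M/C(\delta)$, for every $\delta$, is purely atomic, so $w\in\pwc$. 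Until that lemma and the Fubini/Fatou bookkeeping (choosing subsequences so that a.e.\ slice converges in $L^1$ and has finite liminf of total variation, and relating $\widetilde{\TVphid}(u^i)$ to the slice costs via the rectifiable-slicing formula with the factor $\abs{\iprod{\nu_{J_{u^i}}}{e_k}}$) are written out, the proof is incomplete; alternatively, cite the SBV closure theorem as the paper does.
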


The proof is based on the SBV compactness theorem \cite{ambrosio1989compact}; 
alternatively it can be proved via rectifiability results in the theory 
of currents \cite{white1999rfc}, as used in the study of transportation
networks, e.g., in \cite{paoste05,tuomov-tns}.

\begin{theorem}[SBV compactness \cite{ambrosio1989compact}]
    \label{thm:sbv-compactness}
    Let $\Omega \subset \R^m$ be open and bounded.
    Suppose $\phi, \psi: \nonnegR \to \nonnegR$ are lower semicontinuous and
    increasing with $\phi^\infty=\infty$ and $\psi_0=\infty$.
    Suppose $\{u^i\}_{i=1}^\infty \subset \SBVspace(\Omega)$ 
    and $u^i \weaktostar u \in \SBVspace(\Omega)$ weakly* in $\BVspace(\Omega)$.
    If
    \[
        \sup_{i=1,2,3,\ldots} \left(\int_\Omega \phi(|\grad u^i(x)|) \d x
            + \int_{J_{u^i}} \psi(\theta_{u^i}(x)) \d \H^{m-1}(x)\right) < \infty,
    \]
    then there exists a subsequence of $\{u^i\}_{i=1}^\infty$,
    unrelabelled, such that
    \begin{gather}
        \label{eq:sbv-conv-1}
        u^i \to u \text{ strongly in } L^1(\Omega),
        \\
        \label{eq:sbv-conv-2}
        \grad u^i \weakto \grad u \text{ weakly in } L^1(\Omega;\R^m),
        \\
        \label{eq:sbv-conv-3}
        D^j u^i \weaktostar D^j u \text{ weakly* in } \Meas(\Omega; \R^m).
    \end{gather}
    If, moreover, $\psi$ is subadditive with $\psi(0)=0$, then
    \begin{equation}
        \label{eq:sbv-conv-4}
        \int_{J_{u}} \psi(\theta_{u}(x)) \d \H^{m-1}(x)
        \le
        \liminf_{i \to \infty}
        \int_{J_{u^i}} \psi(\theta_{u^i}(x)) \d \H^{m-1}(x).
    \end{equation}
\end{theorem}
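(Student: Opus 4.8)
The plan is to follow Ambrosio's closure argument for $\SBVspace$, exploiting the two super-linearity hypotheses separately: $\phi^\infty=\infty$ to control the absolutely continuous gradients, and $\psi_0=\infty$ to prevent the jump parts from dissolving into a diffuse limit. First I would extract the weak $L^1$ limit of the gradients. Since $\sup_i \int_\Omega \phi(\abs{\grad u^i}) \d x < \infty$ and $\phi^\infty=\infty$, the family $\{\grad u^i\}$ is equi-integrable by the de la Vall\'ee Poussin criterion, and Dunford--Pettis then yields a (unrelabelled) subsequence and $g \in L^1(\Omega;\R^m)$ with $\grad u^i \weakto g$ weakly in $L^1$. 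The strong convergence $u^i \to u$ in $L^1$, i.e.~\eqref{eq:sbv-conv-1}, is already part of the weak* hypothesis. Passing to weak* limits in the decomposition $Du^i = \grad u^i \L^m + D^j u^i$, and using $Du^i \weaktostar Du$ together with $\grad u^i \L^m \weaktostar g\L^m$, I obtain $D^j u^i \weaktostar \mu^j \defeq Du - g\L^m$. Since $u \in \SBVspace(\Omega)$ is assumed, $Du = \grad u\,\L^m + D^j u$ is already a splitting into an absolutely continuous and a singular (purely jump) part; comparing it with $Du = g\L^m + \mu^j$ gives $(\grad u - g)\L^m = \mu^j - D^j u$, so once I know $\mu^j \perp \L^m$, uniqueness of the Lebesgue decomposition forces both sides to vanish, yielding \eqref{eq:sbv-conv-2} and \eqref{eq:sbv-conv-3} simultaneously.

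The crux is therefore to show $\mu^j \perp \L^m$, and this is where $\psi_0=\infty$ is essential. For $\delta>0$ I would split each jump set into $\{\theta_{u^i}\le\delta\}$ and $\{\theta_{u^i}>\delta\}$. On the large-jump part, monotonicity gives $\psi(\delta)\,\H^{m-1}(\{\theta_{u^i}>\delta\}) \le \int_{J_{u^i}}\psi(\theta_{u^i})\,\d\H^{m-1} \le C$, so the large jumps live on sets of uniformly finite $\H^{m-1}$-measure. On the small-jump part, $\psi_0=\infty$ provides, for any $M$, a threshold $\delta$ with $\psi(t)\ge Mt$ for $t\le\delta$, whence the contributed total variation satisfies $\int_{\{\theta_{u^i}\le\delta\}}\theta_{u^i}\,\d\H^{m-1} \le C/M$, uniformly small in $i$. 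The remaining danger is that the large, $\H^{m-1}$-concentrated parts could still diffuse into an absolutely continuous measure under weak* limits; I would rule this out by slicing $\Omega$ along lines parallel to the coordinate axes and invoking the one-dimensional structure of $\SBVspace$, where the jump part of each slice is purely atomic. In one dimension the large jumps form a \emph{bounded} number of atoms (at most $C/\psi(\delta)$), whose weak* limit is again atomic, while the small jumps contribute total variation $\le C/M$ uniformly; hence the sliced limit of $\mu^j$ differs in total-variation norm from a purely atomic measure by at most $C/M$, and letting $\delta\to0$ exhibits it as a total-variation limit of purely atomic measures, so it is itself purely atomic and thus singular. Fubini together with the one-dimensional sections theory of $\BVspace$ then transfers this singularity back to $\mu^j$.

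Finally, for the lower semicontinuity \eqref{eq:sbv-conv-4} under subadditivity and $\psi(0)=0$, I would again reduce to one dimension, where the surface energy becomes a sum $\sum_j \psi(\theta_j)$ over the atoms of the slice. Subadditivity ensures that when several atoms of $u^i$ coalesce into a single limiting atom the energy cannot drop, since $\psi(\sum_j \theta_j) \le \sum_j \psi(\theta_j)$, while $\psi_0=\infty$ (already used above) forbids any atomic mass from escaping into a diffuse part. Localising the weak* convergence of the atomic measures over disjoint neighbourhoods of the limit atoms and applying Fatou gives the inequality on almost every slice; integrating over the base points and slicing directions, and using the countable $\H^{m-1}$-rectifiability of $J_u$ to reconstruct $\int_{J_u}\psi(\theta_u)\,\d\H^{m-1}$ from the sliced contributions, then yields \eqref{eq:sbv-conv-4}.

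I expect the main obstacle to be the singularity step: proving that no mass leaks from the jump parts into the absolutely continuous part of the limit. This is the genuine content of the closure theorem and is exactly the place where $\psi_0=\infty$ must be invoked. The slicing machinery needed to make ``atoms stay atomic'' rigorous, and the transfer of the \emph{nonlinear} surface energy between the one-dimensional slices and $J_u$ (where $\psi(\theta\,\abs{\iprod{\nu}{\xi}}) \ne \psi(\theta)\abs{\iprod{\nu}{\xi}}$ obstructs a naive Fubini), are the technically heaviest ingredients.
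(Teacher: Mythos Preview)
The paper does not prove this theorem. It is quoted as a known result from \cite{ambrosio1989compact}, with only a remark afterwards that subadditivity together with $\psi(0)=0$ suffices for \eqref{eq:sbv-conv-4} in place of the usually stated convexity, referring to \cite[Chapter~5]{ambrosio1989compact} or the currents approach of \cite{white1999rfc}. There is therefore no in-paper proof to compare your proposal against.

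For what it is worth, your sketch is a faithful outline of Ambrosio's original closure argument: equi-integrability of $\grad u^i$ via de la Vall\'ee Poussin and Dunford--Pettis, identification of the limit splitting by proving that the weak* limit of the jump parts remains $\L^m$-singular, and the slicing reduction to one dimension both for the singularity step and for the lower semicontinuity of the surface energy. You have also correctly identified the two genuinely hard points---the no-leakage step (jump mass cannot diffuse into an absolutely continuous part) and the nonlinear reconstruction of $\int_{J_u}\psi(\theta_u)\,\d\H^{m-1}$ from slices---and these are exactly where the substance of the full proof lies. One small comment: in the standard statement, $u\in\SBVspace(\Omega)$ is a \emph{conclusion} rather than a hypothesis; the paper's phrasing is slightly ambiguous, but your argument actually yields this once $\mu^j\perp\L^m$ is established, since then $D^c u=0$ follows from the Lebesgue decomposition.
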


\begin{remark}
    As is typically stated in the SBV compactness theorem, convexity
    of $\psi$ is required for \eqref{eq:sbv-conv-4}.
    The fact, that subadditivity and $\psi(0)=0$ suffices,
    follows from \cite[Chapter 5]{ambrosio1989compact},
    or from mapping to currents and using \cite{white1999rfc}.
\end{remark}

\begin{proof}[Proof of Theorem \ref{thm:tvphid}]
    Given $u \in \BVspace(\Omega)$, let $u^i \in \pwc(\Omega)$
    satisfy $u^i \weaktostar u$ weakly* in $\BVspace(\Omega)$.
    Then the SBV compactness theorem shows that
    $\grad u=\grad u^i=0$ and $D^c u=0$.
    Thus $u \in \pwc(\Omega)$.
\end{proof}

\begin{remark}
    The functions $\phi(t) = \alpha t/(1+\alpha t)$ and 
    $\phi(t) = \log(\alpha t + 1)$ for $\alpha > 0$,
    considered in \cite{nikolova2008efficient} for
    reconstruction of piecewise constant images, 
    do not have the property $\phi(t)/t \to \infty$
    as $t \downto 0$. The above result therefore does
    not apply, and indeed $\TVphid$ defined using these
    functions will not force $u$ with $\TVphid(u) < \infty$ 
    to be piecewise constant, as the following 
    result states.
\end{remark}

\begin{proposition}
    \label{prop:tvphid-remedy}
    Let $\phi: \nonnegR \to \nonnegR$ be continuously differentiable and satisfy $\phi(0)=0$.
    Then the following hold.
    \begin{enumroman}
    \item
    \label{item:tvphid-remedy:1}
    If $\phi_0 < \infty$ and $\phi$ is subadditive, then there exist a constant $C>0$ such that
    \[
        \TVphid(u) \le C\,\TV(u),
        \quad (u \in \BVspace(\Omega)).
    \]
    \item
    \label{item:tvphid-remedy:2}
    If $\phi_0 > 0$ and $\phi$ is increasing, then for every $M>0$
    there exists also a constant $c=c(M) > 0$ such
    that
    \[
        c\,\TV(u) \le \TVphid(u),
        \quad (u \in \BVspace(\Omega),\ \norm{u}_{L^\infty(\Omega)} \le M).
    \]
    \end{enumroman}
\end{proposition}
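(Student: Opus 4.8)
The plan is to reduce both estimates to elementary linear bounds on $\phi$ on the relevant range of arguments, and then to transport them from the piecewise constant functions, where $\widetilde{\TVphid}$ is an explicit integral, to all of $\BVspace(\Omega)$ by approximation, using that $\TVphid$ is the weak* lower semicontinuous envelope of $\widetilde{\TVphid}$.

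For \ref{item:tvphid-remedy:1}, I would first observe that subadditivity together with $\phi(0)=0$ and the existence of the finite limit $\phi_0$ already forces $\phi(t)\le\phi_0\,t$ for all $t\ge 0$: for $t>0$ and $n\in\N$, subadditivity gives $\phi(t)=\phi\bigl(n(t/n)\bigr)\le n\,\phi(t/n)$, i.e.~$\phi(t)/t\le\phi(t/n)/(t/n)$, and letting $n\to\infty$ yields $\phi(t)/t\le\phi_0$. Hence for every $u\in\pwc(\Omega)$, $\widetilde{\TVphid}(u)=\int_{J_u}\phi(\theta_u)\,\d\H^{m-1}\le\phi_0\int_{J_u}\theta_u\,\d\H^{m-1}=\phi_0\,\TV(u)$. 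Next I would invoke the strict density of $\pwc(\Omega)$ in $\BVspace(\Omega)$ — a standard consequence of the coarea formula, approximating $u$ by finite linear combinations of characteristic functions of its finite-perimeter super-level sets (see, e.g., \cite{ambrosio2000fbv}) — to produce $u^i\in\pwc(\Omega)$ with $u^i\weaktostar u$ and $\TV(u^i)\to\TV(u)$. Feeding this sequence into the definition of $\TVphid$ gives $\TVphid(u)\le\liminf_i\widetilde{\TVphid}(u^i)\le\phi_0\lim_i\TV(u^i)=\phi_0\,\TV(u)$, so $C\defeq\max\{\phi_0,1\}$ works; if $\phi_0=0$, the bound above shows $\phi\equiv0$ and the claim is trivial.

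For \ref{item:tvphid-remedy:2}, fix $M>0$ and put $c\defeq\inf_{0<t\le 2M}\phi(t)/t$. Since $\phi\in C^1$ and $\phi(0)=0$, the map $t\mapsto\phi(t)/t$ extends continuously to $[0,2M]$ with value $\phi_0$ at $0$; as $\phi$ is increasing with $\phi(0)=0$ and $\phi_0>0$, one has $\phi(t)>0$ for $t>0$, so this extension is strictly positive on the compact interval $[0,2M]$ and hence $c>0$. Now let $u\in\BVspace(\Omega)$ with $\norm{u}_{L^\infty(\Omega)}\le M$ and let $\{u^i\}\subset\pwc(\Omega)$ be an arbitrary sequence with $u^i\weaktostar u$; we may assume $L\defeq\liminf_i\widetilde{\TVphid}(u^i)<\infty$ and, passing to a subsequence, $\widetilde{\TVphid}(u^i)\to L$. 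With $T(s)\defeq\max\{-M,\min\{M,s\}\}$, set $\hat u^i\defeq T\circ u^i$. The chain rule in $\BVspace$ shows that $D\hat u^i$ inherits the vanishing of the absolutely continuous and Cantor parts of $Du^i$, so $\hat u^i\in\pwc(\Omega)$. Since $T$ is $1$-Lipschitz, $J_{\hat u^i}\subseteq J_{u^i}$ with $\theta_{\hat u^i}\le\theta_{u^i}$ there, so $\phi$ increasing and nonnegative yields $\widetilde{\TVphid}(\hat u^i)\le\widetilde{\TVphid}(u^i)$; and because $\norm{\hat u^i}_{L^\infty(\Omega)}\le M$ we have $\theta_{\hat u^i}\le 2M$ on $J_{\hat u^i}$, whence $c\,\TV(\hat u^i)\le\widetilde{\TVphid}(\hat u^i)\le\widetilde{\TVphid}(u^i)$; in particular $\sup_i\TV(\hat u^i)<\infty$. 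Finally $\hat u^i=T\circ u^i\to T\circ u=u$ in $L^1(\Omega)$, so weak* lower semicontinuity of $\TV$ gives $c\,\TV(u)\le c\liminf_i\TV(\hat u^i)\le\liminf_i\widetilde{\TVphid}(u^i)=L$. Since $\{u^i\}$ was arbitrary, $\TVphid(u)\ge c\,\TV(u)$.

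The linear bounds on $\phi$ and the truncation step are routine; the point demanding real care is the strict density of $\pwc(\Omega)$ in $\BVspace(\Omega)$ used in \ref{item:tvphid-remedy:1} — one must verify that the approximants genuinely satisfy $Du^i=D^ju^i$ and that $\TV(u^i)\to\TV(u)$, not merely $u^i\weaktostar u$ — and, relatedly, the use of the $\BVspace$ chain rule to keep $T\circ u^i$ piecewise constant in \ref{item:tvphid-remedy:2}; both facts are classical.
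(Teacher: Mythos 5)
Your proof is correct and follows essentially the same route as the paper's: a linear bound on $\phi$ over the relevant range of jump heights, strict approximation by piecewise constant functions for the upper bound, and an arbitrary weak* approximating sequence combined with truncation and lower semicontinuity of $\TV$ for the lower bound. The only differences are cosmetic --- you derive $\phi(t)\le\phi_0\,t$ by iterating subadditivity rather than by bounding $\phi'$, and you spell out the truncation step (correctly using $2M$ as the bound on the jump heights) that the paper only sketches.
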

\begin{proof}
    We first prove the upper bound.
    To begin with, we observe that
    \begin{equation}
        \label{eq:tvphid-remedy-phi1}
        \phi(t) \le \varphi_0 t.
    \end{equation}
    Indeed, since $\phi$ is sub-additive we have
    \[
        \lim_{\delta \downto 0} \frac{\phi(t+\delta)-\phi(t)}{\delta} \le \lim_{\delta \downto 0} \frac{\phi(\delta)}{\delta} = \varphi_0 < \infty
    \]
    Thus $\phi'(t) \le \varphi_0 $. As $\phi(0)=0$, it follows that $\phi(t) \le \varphi_0 t$.
    
    Now, with $u \in \BVspace(\Omega)$, we pick a sequence
    $\{u^k\}_{k=1}^\infty$ in $\pwc(\Omega)$
    converging to $u$ strictly in $\BVspace(\Omega)$;
    for details see \cite{ckp1999regularization}.
    Then by \eqref{eq:tvphid-remedy-phi1} we have
    \[
        \widetilde\TVphid(u^k) \le \varphi_0 \TV(u^k), \quad (k=1,\ldots,\infty).
    \]
    %By lower semicontinuity of $\TV$, we obtain
    Then, by the definition of $\TVphid(u)$ and the strict convergence
    \[
        \TVphid(u)
        \le \liminf_{k \to \infty} \widetilde\TVphid(u^k)
        \le \liminf_{k \to \infty} \varphi_0 \TV(u^k)
        =\varphi_0 \TV(u).
    \]
    The claim in \ref{item:tvphid-remedy:1} follows.
    
    Let us now prove the lower bound in \ref{item:tvphid-remedy:2}.
    First of all, we observe the existence of $c>0$ with
    \begin{equation}
    	\label{eq:phi-estimate}
        \phi(t) \ge c t, \quad (0 \le t \le M).
    \end{equation}
    Indeed, by the definition of $\phi_0$, there exists $t_0>0$ such that
    $\phi(t) > (\phi_0/2) t$ for $t \in (0, t_0)$.
    Since $\phi$ is increasing, we have $\phi(t) \ge \phi(t_0) \ge (\phi_0/2) t_0$ for $t\ge t_0$.
    This yields $c = \phi_0 t_0/(2M)$.
    
    Assuming that $\norm{u}_{L^\infty(\Omega)} \le M < \infty$,
    we now let $\{u_k\}_{k=1}^\infty \subset \pwc(\Omega)$ 
    approximate $u$ weakly* in $\BVspace(\Omega)$.
    We may assume that
    \begin{equation}
        \label{eq:tvphid-remedy-bound}
        \norm{u^k}_{L^\infty(\Omega)} \le M,
    \end{equation}
    because if this would not hold, then we could truncate $u^k$,
    and the modified sequence $\{u^k_M\}_{k=1}^\infty$ would still
    converge to $u$ weakly* in $\BVspace(\Omega)$ with
    $\widetilde\TVphid(u^k_M) \le \widetilde\TVphid(u^k)$.
    Thanks to \eqref{eq:phi-estimate} and \eqref{eq:tvphid-remedy-bound}, we have
    \[
        c\TV(u^k) \le \widetilde\TVphid(u^k), \quad (k=1,\ldots,\infty).
    \]
    By the lower semicontinuity of $\TV(\cdot)$, we obtain
    \[
        c\TV(u) 
        \le
        \liminf_{k \to \infty} \TV(u^k)
        \le
        \liminf_{k \to \infty} \widetilde\TVphid(u^k).
    \]
    Since the approximating sequence $\{u^k\}_{k=1}^\infty$ 
    was arbitrary, the claim follows.
\end{proof}

\section{Limiting aspects of the continuous $\TVphi$ model}
\label{sec:tvphic}

We now consider the continuous model \eqref{eq:tvqc} or \eqref{eq:tvq-cont-disc}.
Both are common in works aiming to model real image statistics. We initially restrict our 
attention to the following energies $\phi$.

\begin{definition}
    We denote by $\Wc$ the class of increasing, subadditive, 
    continuous functions $\phi: \nonnegR \to \nonnegR$ 
    with $\phi^\infty=0$.
\end{definition}

\begin{example}
    Examples of $\phi \in \Wc$ include in particular $\phi(t)=t^q$ 
    for $q \in (0, 1)$, as well as $\phi(t)=\alpha t/(1+\alpha t)$ and
    $\phi(t)=\log(\alpha t + 1)$ for $\alpha > 0$.
\end{example}

\begin{definition}    
Given an energy $\phi$, we start with the $C^1$ model \eqref{eq:tvqc},
which we now denote by
\[
    \widetilde{\TVphic}(u) \defeq \int_\Omega \phi(|\grad u(x)|) \d x,
    \quad
    (u \in C^1(\Omega)).
\]
In order to extend this to $u \in \BVspace(\Omega)$, we take the weak*
lower semicontinuous envelope
\[
    \TVphic(u) \defeq \liminf_{\substack{u^i \weaktostar u,\\ u^i \in C^1(\Omega)}} \widetilde{\TVphid}(u^i).
\]
In the definition, the convergence is weakly* in $\BVspace(\Omega)$. 
\end{definition}

We emphasise that it is crucial to define $\TVphic$ through this 
limiting process in order to obtain weak* lower semicontinuity. This is 
useful to show the existence of solutions to variational problems 
with the regulariser $\TVphic$ in $\BVspace(\Omega)$ -- or a larger space,
as there is no guarantee that $\TVphic(u) < \infty$ would imply
$u \in \BVspace(\Omega)$. 

Our main result on the $\TVphic$ model states the following theorem.

\begin{theorem}
    \label{thm:tvphic}
    Let $\phi \in \Wc$, and suppose that $\Omega \subset \R^m$ 
    has a Lipschitz boundary. Then
    \[
        \TVphic(u) = 0 \quad \text{ for } \quad u \in \BVspace(\Omega).
    \]
\end{theorem}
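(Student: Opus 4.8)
The plan is to show that any $u \in \BVspace(\Omega)$ can be approximated weakly* by $C^1$ functions $u^i$ with $\widetilde{\TVphic}(u^i) \to 0$. The motivating one-dimensional example already reveals the mechanism: a jump of size $s$ smeared over a layer of width $\delta$ costs $\delta \cdot \phi(s/\delta) = s \cdot \phi(s/\delta)/(s/\delta)$, which tends to $0$ as $\delta \downto 0$ precisely because $\phi^\infty = 0$. The task is to push this through in $m$ dimensions for arbitrary $\BV$ functions, not just step functions, and to control the boundary behaviour, which is why the Lipschitz boundary hypothesis appears.

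First I would reduce to a convenient dense class. Since $C^\infty(\closure\Omega) \cap \BVspace(\Omega)$ is dense in $\BVspace(\Omega)$ with respect to \emph{strict} convergence (standard, using mollification and the Lipschitz boundary to handle a neighbourhood of $\BD\Omega$), and since the weak* lower semicontinuous envelope only gets smaller when we enlarge the admissible sequences, it suffices to show $\TVphic(u) = 0$ for $u \in C^\infty(\closure\Omega)$; then a diagonal argument over a strictly convergent sequence $u_k \to u$ with $\TVphic(u_k) = 0$ gives $\TVphic(u) = 0$ in general (strict convergence implies weak* convergence). Actually even simpler: for $u$ itself smooth, $u \in C^1(\Omega)$ is already admissible in the envelope, so I need to exhibit \emph{other} $C^1$ approximants with vanishing energy — the constant sequence won't do. So the real content is: given $u \in C^1(\closure\Omega)$ (or even just Lipschitz), construct $u^i \weaktostar u$ in $C^1(\Omega)$ with $\int_\Omega \phi(|\grad u^i|) \to 0$.

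The construction I would use is a "sawtooth" oscillation that flattens $u$ at the cost of many small jumps, which are then themselves smoothed. Concretely, partition $\Omega$ (up to a small boundary layer) into cubes of side $h$; on each cube replace $u$ by a function that is constant on most of the cube and reconnects to neighbouring values across thin transition layers of width $\delta \ll h$. The value of $\phi$ on the flat pieces is $\phi(0) = 0$; on the transition layers, where the gradient is of order $\mathrm{Lip}(u) \cdot h / \delta$, the contribution is at most (total layer volume) $\cdot \phi(Ch/\delta) \le C h^{m-1} \delta \cdot h^{-1} \cdot \phi(Ch/\delta) \cdot (\text{number of cubes})$. Choosing $\delta = \delta(h)$ so that $h/\delta \to \infty$ but $h^{m-1}\delta^{} \cdot (\text{cube count}) \cdot \phi(Ch/\delta) \to 0$ — possible because $\phi(t)/t \to 0$, so $\phi(t) = o(t)$ and the product $\delta \cdot \phi(Ch/\delta) = (h/C) \cdot \phi(Ch/\delta)/(Ch/\delta) \to 0$ — kills the energy. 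Simultaneously, because the oscillation has amplitude $O(h)$, $u^i \to u$ in $L^1$, and the gradient measures stay bounded and converge weakly* to $Du$ (one should arrange the sawtooth to have mean-zero correction so that $\int \grad u^i \to \int \grad u$ against test functions — or, more robustly, just prove $L^1$ convergence of $u^i$ and $L^\infty$ boundedness of $\grad u^i$ after a further mollification; weak* convergence in $\BVspace$ then follows from the $L^1$ convergence together with uniform $\BV$ bounds, using that weak* limits are unique). Finally each $u^i$, being only piecewise affine / Lipschitz, is mollified at a scale $\ll \delta$ to land in $C^1(\Omega)$; the mollification does not increase $\int\phi(|\grad\cdot|)$ by more than a factor via Jensen if $\phi$ were convex, but $\phi$ is concave, so instead one controls it by the gradient bound and the small measure of the smoothing region, or simply by replacing the sharp sawtooth with an a priori $C^1$ sawtooth profile from the start.

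The main obstacle I anticipate is the boundary: the sawtooth construction must match $u$ on (a neighbourhood of) $\BD\Omega$, or at least be close enough in $L^1$ and have controlled mass there, and this is exactly where the Lipschitz boundary assumption is needed — it lets one either (i) extend $u$ to a slightly larger domain and restrict, or (ii) carry out the cube decomposition so that the uncontrolled boundary layer has measure $o(1)$ and, crucially, so that no oscillation is introduced across $\BD\Omega$ that would spoil weak* convergence of the gradient measures (which see $\Omega$ as an open set, so boundary jumps are not directly penalised, but mass escaping to $\BD\Omega$ must be prevented). A secondary technical point is verifying $D u^i \weaktostar Du$ rather than merely $L^1$ convergence; the cleanest route is to note $\sup_i \TV(u^i) < \infty$ (the sawtooth has $\TV$ comparable to $\TV(u)$ up to the boundary layer), extract a weak* limit, and identify it as $Du$ using the $L^1$ convergence $u^i \to u$ and the distributional definition of the gradient.
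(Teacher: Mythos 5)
Your argument is correct in outline, but it takes a genuinely different route from the paper. The paper never builds an explicit oscillating competitor; instead it proceeds in two stages: first Lemma \ref{lemma:tvphic} establishes the general upper bound $\TVphic(u) \le \int_\Omega \phi(|\grad u(x)|)\d x$ for \emph{every} $u \in \BVspace(\Omega)$ by mollifying the zero-extension $u_0$ and showing, via Besicovitch differentiation and Egorov, that the singular part of $Du_0$ contributes nothing in the limit (this is where $\phi^\infty=0$ enters); then, for $u \in C^1(\Omega)$, it approximates $u$ strictly by piecewise constant functions $u_h$, notes that the Lemma forces $\TVphic(u_h)=\phi(0)\L^m(\Omega)=0$ since $\grad u_h \equiv 0$, and concludes by two applications of weak* lower semicontinuity. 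Your sawtooth construction is essentially what you would obtain by unfolding that chain --- a mollified piecewise constant approximation of a Lipschitz $u$ \emph{is} a smoothed sawtooth with transition layers of width comparable to the mollification scale --- so the witnessing $C^1$ sequences are morally the same, and your key identity $\delta\,\phi(Ch/\delta) = (h/C)\,\phi(Ch/\delta)/(Ch/\delta) \to 0$ is exactly the mechanism the paper exploits on the singular part. What your route buys is a more elementary and self-contained proof of the theorem: all estimates are explicit on a concrete competitor and you avoid the measure-theoretic machinery needed to handle the singular part of a general $\BVspace$ function. What you lose is the reusable quantitative bound of Lemma \ref{lemma:tvphic}, which the paper needs again (in modified form) for Theorem \ref{thm:tvphic-lin}. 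Two points in your write-up deserve tightening but are not gaps: (i) the verification that $Du^i \weaktostar Du$ is immediate from $u^i \to u$ in $L^1$, the distributional definition of $Du^i$, and the uniform bound $\sup_i\TV(u^i) \le C\,\TV(u)$, so your worry about mass escaping to $\BD\Omega$ is unfounded ($\Omega$ is open and weak* convergence only tests against $C_0(\Omega;\R^m)$); (ii) the reduction to globally Lipschitz $u$ does require strict density of $C^\infty(\closure\Omega)$ rather than merely $C^\infty(\Omega)$, which is where the Lipschitz boundary is genuinely used in your version, matching its use for the trace/extension estimates in the paper's version.
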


The main ingredient of the proof is Lemma \ref{lemma:tvphic}, 
which is provided by a simple result.

\begin{lemma}
    \label{lemma:phi-wc-approx}
    Let $\phi \in \Wc$. Then there exist $a, b >0$ such that
    \[
        \phi(t) \le a + b t, \quad (t \in \nonnegR).
    \]
\end{lemma}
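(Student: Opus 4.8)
The plan is to split the domain $\nonnegR$ into a bounded part $[0,T]$ and a tail $[T,\infty)$, bound $\phi$ by a constant on the former using continuity together with compactness, bound it by a linear function on the latter using $\phi^\infty=0$, and then glue the two estimates together.

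First I would use the hypothesis $\phi^\infty = \lim_{t\upto\infty}\phi(t)/t = 0$: since this limit exists and equals $0$, there is some $T>0$ with $\phi(t)/t \le 1$, hence $\phi(t)\le t$, for every $t\ge T$. Next, $\phi$ is continuous on the compact interval $[0,T]$, so it is bounded there; set $a := 1 + \sup_{t\in[0,T]}\phi(t) < \infty$, which is strictly positive, and set $b := 1$. Then for $t\in[0,T]$ we get $\phi(t) \le a \le a + bt$, while for $t>T$ we get $\phi(t)\le t \le a + bt$. In both cases $\phi(t)\le a+bt$, which is the claim.

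There is essentially no obstacle here: neither subadditivity nor monotonicity of $\phi$ is actually used, only continuity of $\phi$ on $\nonnegR$ and the finiteness (indeed, vanishing) of $\lim_{t\upto\infty}\phi(t)/t$. The only point requiring a moment's care is that $a$ and $b$ come out strictly positive as asserted in the statement; this is handled by the harmless ``$+1$'' in the definition of $a$ and by taking $b=1$.
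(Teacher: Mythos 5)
Your proof is correct and follows essentially the same route as the paper: use $\phi^\infty=0$ to get a linear bound $\phi(t)\le t$ on a tail $[T,\infty)$, and a constant bound on the compact initial segment. The only (inessential) difference is that the paper bounds $\phi$ on $[0,t_0]$ by $\phi(t_0)$ using monotonicity, whereas you use continuity plus compactness; both work, and your remark that monotonicity and subadditivity are not needed is accurate.
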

\begin{proof}
    Since $\phi^\infty=0$, 
    we can find $t_0 > 0$ such that $\phi(t)/t \le 1$ for $t \ge t_0$.
    Thus, because $\phi$ is increasing, we have $\phi(t) \le \phi(t_0)+t$ for every $t \in \nonnegR$.
\end{proof}

\begin{lemma}
    \label{lemma:tvphic}
    Let $\phi \in \Wc$, and suppose that $\Omega \subset \R^m$ is bounded with 
    Lipschitz boundary. Then
    \begin{equation}
        \label{eq:tvphic-bound}
        \TVphic(u) \le \int_\Omega \phi(|\grad u(x)|) \d x,
        \quad
        (u \in \BVspace(\Omega)).
    \end{equation}
    %In particular $\TVphic(u)=0$ if $u$ is piecewise constant.
\end{lemma}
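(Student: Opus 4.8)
The plan is to construct, for a given $u\in\BVspace(\Omega)$, an explicit sequence of admissible competitors in the envelope defining $\TVphic(u)$ — mollifications of a $\BVspace(\R^m)$-extension of $u$ — and to show that along this sequence $\int_\Omega\phi(|\grad u_\epsilon|)\d x$ converges to $\int_\Omega\phi(|\grad u|)\d x$, with the singular part of $Du$ being smeared out into regions where, because $\phi^\infty=0$, it carries asymptotically no $\phi$-energy. Concretely, since $\Omega$ is bounded with Lipschitz boundary, extend $u$ to some $\tilde u\in\BVspace(\R^m)$ with $\tilde u|\Omega=u$ (so $D\tilde u\restrict\Omega=Du$ and $|D\tilde u|(\R^m)<\infty$), take a standard mollifier $\rho_\epsilon$, and set $u_\epsilon\defeq(\tilde u*\rho_\epsilon)|\Omega\in C^\infty(\closure\Omega)\subset C^1(\Omega)\isect\BVspace(\Omega)$. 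Then $u_\epsilon\to u$ in $L^1(\Omega)$; and since $\sup_\epsilon|Du_\epsilon|(\Omega)\le|D\tilde u|(\R^m)<\infty$ while $\int_\Omega\varphi\,dDu_\epsilon\to\int_\Omega\varphi\,dDu$ for every $\varphi\in C_c(\Omega)$ (move the mollification onto $\varphi$ and use uniform continuity), we get $u_\epsilon\weaktostar u$ in $\BVspace(\Omega)$. Hence each $u_\epsilon$ is admissible and it suffices to bound $\limsup_{\epsilon\downto 0}\int_\Omega\phi(|\grad u_\epsilon|)\d x$.

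Write $D\tilde u=\grad\tilde u\,\L^m+D^s\tilde u$. By linearity of convolution, $\grad u_\epsilon=(\grad\tilde u)*\rho_\epsilon+(D^s\tilde u)*\rho_\epsilon$ on $\Omega$, so $|\grad u_\epsilon|\le|v_\epsilon|+g_\epsilon$ with $v_\epsilon\defeq(\grad\tilde u)*\rho_\epsilon$ and $g_\epsilon\defeq|D^s\tilde u|*\rho_\epsilon\ge|(D^s\tilde u)*\rho_\epsilon|$; monotonicity and subadditivity of $\phi$ then give
\[
    \int_\Omega\phi(|\grad u_\epsilon|)\d x\le\int_\Omega\phi(|v_\epsilon|)\d x+\int_\Omega\phi(g_\epsilon)\d x.
\]
For the first term: as $\grad\tilde u\in L^1(\R^m;\R^m)$ we have $v_\epsilon\to\grad\tilde u$ in $L^1$, hence $|v_\epsilon|\to|\grad u|$ in $L^1(\Omega)$ and in measure; by Lemma~\ref{lemma:phi-wc-approx}, $0\le\phi(|v_\epsilon|)\le a+b|v_\epsilon|$, so $\{\phi(|v_\epsilon|)\}$ is equi-integrable on the bounded set $\Omega$, and since $\phi$ is continuous, Vitali's theorem yields $\int_\Omega\phi(|v_\epsilon|)\d x\to\int_\Omega\phi(|\grad u|)\d x<\infty$.

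The crux is the second term. Since $|D^s\tilde u|\perp\L^m$, a standard differentiation argument ($\rho_\epsilon*|D^s\tilde u|(x)\lesssim\epsilon^{-m}|D^s\tilde u|(\B(x,\epsilon))\to0$ a.e.) gives $g_\epsilon\to 0$ $\L^m$-a.e., while $\int_{\R^m}g_\epsilon\d x=|D^s\tilde u|(\R^m)=:m_s$ for all $\epsilon$. Fix $\eta>0$. Using $\phi(0)=0$ and continuity pick $\delta>0$ with $\phi(\delta)\,\L^m(\Omega)\le\eta/3$; using $\phi^\infty=0$ pick $K>0$ with $\phi(t)\le\frac{\eta}{3(m_s+1)}t$ for $t\ge K$, so that (by monotonicity for $t\le K$) $\phi(t)\le\phi(K)+\frac{\eta}{3(m_s+1)}t$ for all $t\ge0$; and pick $\sigma>0$ with $\phi(K)\sigma\le\eta/3$. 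By Egorov's theorem there is $E\subset\Omega$ with $\L^m(\Omega\setminus E)<\sigma$ on which $g_\epsilon\to0$ uniformly, so $g_\epsilon\le\delta$ on $E$ for $\epsilon$ small, whence
\[
    \int_\Omega\phi(g_\epsilon)\d x\le\phi(\delta)\L^m(E)+\phi(K)\L^m(\Omega\setminus E)+\tfrac{\eta}{3(m_s+1)}\!\int_{\R^m}\!g_\epsilon\d x\le\eta.
\]
Thus $\int_\Omega\phi(g_\epsilon)\d x\to0$, and combining the three displays gives $\TVphic(u)\le\limsup_{\epsilon\downto0}\int_\Omega\phi(|\grad u_\epsilon|)\d x\le\int_\Omega\phi(|\grad u|)\d x$.

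I expect the main obstacle to be exactly this last step: one must certify that the diffused jump/Cantor mass leaves no non-negligible residual $\phi$-energy, and this is where the sublinear growth $\phi^\infty=0$ is indispensable — it is precisely what makes the large gradients, into which $D^s\tilde u$ is spread at scale $\epsilon$, asymptotically free. The boundary handling in the construction (where the Lipschitz hypothesis enters, via the $\BVspace$ extension operator) is routine by comparison, as is the equi-integrability/Vitali argument for the absolutely continuous part.
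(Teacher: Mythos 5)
Your proof is correct and follows essentially the same route as the paper's: mollify a $\BVspace$ extension of $u$, split $|\grad u_\epsilon|$ into the mollified absolutely continuous and singular parts via subadditivity, and use $\phi^\infty=0$ together with Egorov and the a.e.\ vanishing of $\rho_\epsilon*\abs{D^s\tilde u}$ to show the diffused singular mass carries no $\phi$-energy in the limit. The only cosmetic differences are your use of a general extension operator in place of the zero-extension and of Vitali's convergence theorem in place of the paper's reverse-Fatou/almost-uniform-convergence argument for the absolutely continuous part.
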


Observe the difference between Lemma \ref{lemma:tvphic} 
and Theorem \ref{thm:tvphid}. The former shows that in the limit
of $\widetilde\TVphic$, the singular part is completely free, 
whereas the latter shows that in the limit of $\widetilde\TVphid$, 
only the jump part is allowed at all!

\begin{proof}
    We may assume that
    \begin{equation}
        %\label{eq:phi-gradu-bnd-ass}
        \notag
        \int_\Omega \phi(|\grad u(x)|) \d x < \infty,
    \end{equation}
    because otherwise there is nothing to prove.
    We let $u_0 \in \BVspace(\R^m)$ denote the zero-extension
    of $u$ from $\Omega$ to $\R^m$. Then
    \[
        Du_0=Du - \nu_{\BD \Omega} u^- \H^{n-1} \restrict \BD \Omega
    \]
    for $u^-$ the interior trace of $u$ on $\BD \Omega$, and $\nu_{\BD \Omega}$
    the exterior normal of $\Omega$.
    In fact \cite[Section 3.7]{ambrosio2000fbv} there exists a constant $C=C(\Omega)$ such that
    \[
        \norm{\nu_{\BD \Omega} u^- \H^{n-1} \restrict \BD \Omega}_{\Meas(\R^m; \R^m)}
        \le C \norm{u}_{\BVspace(\Omega)}.
    \]
    We pick some $\rho \in C_c^\infty(\R^m)$ with $0 \le \rho \le 1$, 
    $\int \rho \d x=1$, and $\supp \rho \subset \B(0, 1)$.
    We then define the family of mollifiers
    $\rho_\epsilon(x) \defeq \epsilon^{-n} \rho(x/\epsilon)$
    for $\epsilon >0$, and let
    \[
        u_\epsilon \defeq (\rho_\epsilon * u_0)|\Omega.
    \]
    Then $u_\epsilon \in C^\infty(\closure \Omega)$, and 
    $u_\epsilon \to u$ strongly in $L^1(\Omega)$ as $\epsilon\downto0$.
    As $\abs{D u_\epsilon}(\omega) \le \abs{D u_0}(\Omega)$,
    it follows that $u_\epsilon \weaktostar u$ weakly* in
    $\BVspace(\Omega)$; see, e.g., \cite[Proposition 3.13]{ambrosio2000fbv}. Thus
    \[
        \TVphic(u) \le \liminf_{\epsilon \downto 0} \widetilde \TVphic(u_\epsilon).
    \]
    In order to obtain the conclusion of the theorem, 
    we just have to calculate the right hand side.
    
    We have
    \begin{equation}
        \label{eq:grad-u-epsilon-split}
        \begin{split}
        |\grad u_\epsilon(x)|
        &
        = \left|\int_{\R^m} \rho_\epsilon(x-y) \d D u_0(y)\right|
        \le
        \int_{\R^m} \rho_\epsilon(x-y) \d \abs{D u_0}(y)
        \\
        &
        \le
        \int_{\R^m} \rho_\epsilon(x-y) |\grad u_0(y)| \d y
        +
        \int_{\R^m} \rho_\epsilon(x-y) \d \abs{D^s u_0}(y).
        %+
        %\int_{\R^m} \rho_\epsilon(x-y) \d \abs{D^j u_0}(y).
        \end{split}
    \end{equation}
    We approximate the terms for the absolutely continuous 
    and singular parts differently.
    Starting with the absolutely continuous part, we let
    $K$ be a compact set such that $\Omega + \B(0, 1) \subset K$,
    and define
    \[
        g_0(x) \defeq |\grad u_0(x)|
        \quad
        \text{and}
        \quad
        g_\epsilon(x) 
        \defeq 
        \int_{\R^m} \rho_\epsilon(x-y) |\grad u_0(y)| \d y.
    \]
    Then $g_\epsilon \to g_0$ in $L^1(K)$,
    and $g_\epsilon|(\R^m \setminus K)=0$ for $\epsilon \in (0, 1)$.
    By the $L^1$ convergence, we can find a sequence $\epsilon^i \downto 0$
    such that $g_{\epsilon^i} \to g_0$ almost uniformly. 
    Consequently, given $\delta > 0$, we may find a set 
    $E \subset K$ with $\L^m(K \setminus E) < \delta$ and
    $g_{\epsilon^i} \to g_0$ uniformly on $E$. 
    We may assume that each $\epsilon^i$ is small enough such that
    \begin{equation}
        \label{eq:k-l1-bound}
        \norm{g_{\epsilon^i} - g_0}_{L^1(K)} \le \delta.
    \end{equation}
    %
    %we have $g_\epsilon \to \grad u_0$ in $L^1_\text{loc}(\R^m)$,
    %i.e., in $L^1(K)$ for all compact $K \subset \R^m$.
    %Let us pick $\delta > 0$. Then we may find $K$ such that
    %$\int_{\R^m \setminus K} \phi(\norm{\grad u_0(y)}) \d y \le \delta$.
    %Since $g_\epsilon \to \grad u_0$ in $L^1(K)$, we also have almost
    %uniform convergence of $g_\epsilon$ to $\grad u_0$ to $K$. Consequently,
    %we may find a set $E \subset K$ with $\L^m(K \setminus E) < \delta$ and
    %$g_\epsilon \to \grad u_0$ uniformly on $E$. 
    %
    Lemma \ref{lemma:phi-wc-approx} provides for some $a, b > 0$ the estimate
    \begin{equation}
        \label{eq:phi-a-b-approx}
        \phi(t) \le a + bt.
    \end{equation}
    From the uniform convergence on $E$, it follows that for large enough $i$, 
    we have
    \[
        \phi(g_{\epsilon^i}(x)) \le
        \phi(1+g_0(x)) \le v(x) \defeq a + b(1+g_0(x)),
        \quad
        (x \in E).
    \]
    Since $E \subset K$ is bounded, $v \in L^1(E)$.
    The reverse Fatou inequality on $E$ gives the estimate
    \begin{equation}
        \label{eq:tvqc-ac-est1}
        \limsup_{i \to \infty}
        \int_E
        \phi(g_{\epsilon^i}(x)) \d x
        \le
        \int_E
            \limsup_{i \to \infty}
            \phi(g_{\epsilon^i}(x)) \d x
        \le
        \int_E
            \phi(g_0(x))
        \d x.
    \end{equation}
    On $K \setminus E$, we obtain the estimate
    \begin{equation}
        \label{eq:tvqc-ac-est2}
        \begin{aligned}
        \int_{K \setminus E}
        \phi(g_{\epsilon^i}(x)) 
        \d x
        &
        \le
        \int_{K \setminus E}
        \phi(g_0(x)) 
        +
        \phi(\abs{g_{\epsilon^i}(x)-g_0(x)}) 
        \d x
        &&
        \text{(by subadditivity)}
        \\
        &
        \le
        \int_{K \setminus E}
            \phi(g_0(x))
            \d x
        +
        a \L^m(K \setminus E) + b \norm{g_{\epsilon^i}- g_0}_{L^1(K)}
        &&
        \text{(by \eqref{eq:phi-a-b-approx})}
        \\
        &
        \le
        \int_{K \setminus E}
            \phi(g_0(x))
            \d x
            +(a+b)\delta.
        &&
        \text{(by \eqref{eq:k-l1-bound})}
        \end{aligned}
    \end{equation}
    Combining the estimates \eqref{eq:tvqc-ac-est1} and \eqref{eq:tvqc-ac-est2}, we
    have
    \[
        \limsup_{i \to \infty}
        \int_\Omega
        \phi(g_{\epsilon^i}(x)) \d x
        \le
        \int_{K}
            \phi(g_0(x))
            \d x
        +(a+b)\delta.
    \]
    Since $\delta > 0$ was arbitrary, and we may always find an almost uniformly 
    convergent subsequence of any subsequence of $\{g_\epsilon\}_{\epsilon > 0}$, 
    we conclude that
    \begin{equation}
        \label{eq:tvqc-ac-estim}
        \limsup_{\epsilon \downto 0}
        \int_\Omega
        \phi(g_{\epsilon}(x)) \d x
        \le
        \int_{K}
            \phi(|\grad u_0(x)|)
            \d x
        =
        \int_{\Omega}
            \phi(|\grad u(x)|)
            \d x.
    \end{equation}

    Let us then consider the singular part in \eqref{eq:grad-u-epsilon-split}.
    We observe that $\int_{\R^m} \rho_\epsilon(x-y) \d \abs{D^s u_0(y)}=0$,
    for $x \in \R^m \setminus K$. 
    %Thus we get for $\epsilon \in (0, 1)$ the estimate
    If we define
    \[
        f_\epsilon(x) \defeq \epsilon^{-m} \abs{D^s u_0}(\B(x, \epsilon)),
        \quad (x \in K),
    \]
    then by Fubini's theorem
    \begin{equation}
        \label{eq:f3-ds-u-k}
        \begin{split}
        \int_K f_\epsilon(x) \d x
        %&
        =
        \epsilon^{-m} \int_K \int_K \chi_{\B(x, \epsilon)}(y) \d \abs{D^s u_0}(y) \d x
        %\\
        &
        =
        %\epsilon^{-m} \int_{\B(0, \epsilon)} \int_{x+K} \d \abs{D^s u_0}(y) \d x
        \epsilon^{-m} \int_K \int_K \chi_{\B(y, \epsilon)}(x) \d x \d \abs{D^s u_0}(y)
        \\
        &
        \le \omega_m \abs{D^s u_0}(K).
        \end{split}
    \end{equation}
    Here $\omega_m$ is the volume of the unit ball in $\R^m$.
    Moreover, by the Besicovitch derivation theorem 
    (discussed, for example, in \cite{ambrosio2000fbv,mattila1999geometry}), we have
    \[
        \lim_{\epsilon \downto 0} f_\epsilon(x) = 0, \quad (\L^m\text{-\ae} x \in K).
    \]
    Because $\L^m(K) < \infty$, Egorov's theorem shows that
    $f_\epsilon \to 0$ almost uniformly. 
    Thus, for any $\delta>0$, there exists a set $K_\delta \subset K$
    with $\L^m(K \setminus K_\delta) \le \delta$ and
    $f_\epsilon \to 0$ uniformly on $K_\delta$. 
    
    Next we study $K \setminus K_\delta$.
    We pick an arbitrary $\sigma > 0$. 
    Because $\phi(t)/t \to 0$
    as $t \to \infty$, there exists $t_0 > 0$ such that
    $\phi(t) \le \sigma t$ for $t \ge t_0$.
    In fact, because $\phi$ is lower semicontinuous and $\phi(0)=0$, 
    if we choose
    \[
        t_0 \defeq \inf\{t \ge 0\mid \phi(t) < \sigma t\},
    \]
    then  $\phi(t_0) = \sigma t_0$. Thus, because $\phi$ is increasing
    \begin{equation}
        \label{eq:phi0}
        \phi(t) \le \tilde \phi(t) 
                %\defeq \phi(t_0)+\delta\max\{0, t-t_0\}
                %\defeq \phi(t_0)+\max\{0, \delta t-\phi(t_0)\}
                \defeq
                 %\sigma \max\{t_0, t\},
                %\le 
                \sigma(t_0+t),
        \quad (t \in \nonnegR).
    \end{equation}
    Choosing $\epsilon \in (0, 1)$ such that
    $f_\epsilon \le \delta$ on $K_\delta$,
    and using $\rho_\epsilon \le \epsilon^{-m} \chi_{\B(0, \epsilon)}$,
    we may approximate
    \begin{equation}
        \label{eq:tvqc-singular-estim-1}
        \begin{aligned}
        \int_{\R^m} \phi\left(\int_{\R^m} \rho_\epsilon(x-y) \d \abs{D^s u_0}(y)\right) \d x
        &
        \le
        \int_{\R^m} \phi\left(f_\epsilon(x)\right) \d x
        \\
        &
        \le
        \int_{K_\delta} \phi\left(f_\epsilon(x)\right) \d x
        +
        \int_{K \setminus K_\delta} \tilde \phi\left(f_\epsilon(x)\right) \d x
        &&
        \\
        &
        \le
        \int_{K_\delta} \phi(\delta) \d x
        +
        \int_{K \setminus K_\delta} \sigma\left(t_0 + f_\epsilon(x)\right) \d x
        &&
        \\
        &
        \le
        \L^m(K) \phi(\delta)
        +
        \delta\sigma t_0 + \sigma \omega_m \abs{D^s u_0}(K)
        &
        &
        \text{(by \eqref{eq:f3-ds-u-k}).}
        \end{aligned}
    \end{equation}
    Thus
    \[
        \liminf_{\epsilon \downto 0}
        \int_{\R^m} \phi\left(\int_{\R^m} \rho_\epsilon(x-y) \d \abs{D^s u_0}(y)\right) \d x
        \le
        \L^m(K) \phi(\delta)
        +
        \delta\sigma t_0 + \sigma \omega_m \abs{D^s u_0}(K).
    \]
    Observe that the choices of $\sigma$ and $t_0$ are independent of $\delta$.
    Therefore, because $\delta > 0$ was arbitrary, using the continuity of $\phi$ 
    we deduce that we may set $\delta=0$ above. But then, because $\sigma > 0$ was
    also arbitrary, we deduce
    \begin{equation}
        \label{eq:tvqc-singular-estim}
        \lim_{\epsilon \downto 0}
        \int_{\R^m} \phi\left(\int_{\R^m} \rho_\epsilon(x-y) \d \abs{D^s u_0}(y)\right) \d x
        = 0.
    \end{equation}
    
    Finally, combining the estimate \eqref{eq:tvqc-ac-estim} for the absolutely continuous part
    and the estimate \eqref{eq:tvqc-singular-estim} for the singular part
    with \eqref{eq:grad-u-epsilon-split}, we deduce that
    \[
        %\begin{split}
        \limsup_{\epsilon \downto 0}
        \widetilde \TVphic(u_\epsilon)
        %&
        =
        \limsup_{\epsilon \downto 0}
        \int_{\R^m} \phi\left(\int_{\R^m} \rho_\epsilon(x-y) \d \abs{D u_0}(y)\right) \d x 
        %\\
        %&
        \le 
        \int_{\Omega}
            \phi(|\grad u(x)|)
            \d x.
        %\end{split}
    \]    
    This concludes the proof of \eqref{eq:tvphic-bound}.
    %The claim on piecewise constant functions is immediate, as then $D^c u=0$ and $\grad u=0$.
\end{proof}

\begin{proof}[Proof of Theorem \ref{thm:tvphic}]
    We employ the bound \eqref{eq:tvphic-bound} of
    Lemma \ref{lemma:tvphic}, but still have to
    extend it to a possibly unbounded domain $\Omega$.
    For this purpose, we let $R > 0$ be arbitrary, and apply
    the lemma to $u_R \defeq u|\B(0, R)$. Then
    \begin{equation*}
        \TVphic(u_R)
        \le \int_\Omega \phi(|\grad u_R(x)|) \d x
        \le \int_\Omega \phi(|\grad u(x)|) \d x.
    \end{equation*}
    But $u_R \weaktostar u$ weakly* in $\BVspace(\Omega)$ as $R \upto \infty$;
    indeed $L^1$ convergence is obvious, and for any 
    $\phi \in C_c^\infty(\Omega; \R^m)$, we have $\support \phi \in \B(0, R)$
    for large enough $R$, so that $Du_R(\phi)=Du(\phi)$.
    Therefore, because $\TVphic$ is weakly* 
    lower semicontinuous by construction, we conclude that
    \begin{equation}
        \label{eq:tvphic-bound-full}
        \TVphic(u) \le \int_\Omega \phi(|\grad u(x)|) \d x.
    \end{equation}

    Given any $u \in C^1(\Omega)$, we may find $u_h \in \pwc(\Omega)$, ($h > 0$),
    strictly convergent to $u$ in $\BVspace(\Omega)$ 
    \cite{ckp1999regularization}. But \eqref{eq:tvphic-bound-full} 
    shows that
    \[
        \TVphic(u_h) = 0.
    \]
    By the weak* lower semicontinuity of $\TVphic$ we conclude
    \[
        \TVphic(u) \le \liminf_{h \downto 0} \TVphic(u_h) = 0,
        \quad
        (u \in C^1(\Omega)).
    \]
    Another referral to lower-semicontinuity now shows that $\TVphic(u)=0$ 
    for any $u \in \BVspace(\Omega)$.
\end{proof}

Similarly to Proposition \ref{prop:tvphid-remedy}
for $\TVphid$, we have the following more positive result.

\begin{proposition}
    \label{prop:tvphic-remedy}
    Let $\phi: \nonnegR \to \nonnegR$ be lower semicontinuous and satisfy $\phi(0)=0$.
    Then the following hold.
    \begin{enumroman}
    \item
    If $\phi_0 < \infty$ and $\phi$ is subadditive, then there exist a constant $C>0$ such that
    \[
        \TVphic(u) \le C\,\TV(u),
        \quad (u \in \BVspace(\Omega)).
    \]
    \item
    \label{item:tvphic-remedy-ii}
    If $\phi_0, \phi^\infty >0$ and $\phi$ is increasing, 
    then there exists also a constant $c > 0$ such that
    \[
        c\,\TV(u) \le \TVphic(u),
        \quad (u \in \BVspace(\Omega)).
    \]
    \end{enumroman}
\end{proposition}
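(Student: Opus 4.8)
The plan is to follow the two-sided pattern of Proposition~\ref{prop:tvphid-remedy}, replacing the piecewise constant approximants by smooth ones: for the upper bound one uses a sequence converging \emph{strictly} to $u$ in $\BVspace(\Omega)$, so that $\TV$ passes to the limit from above, and for the lower bound one uses an \emph{arbitrary} weakly* convergent sequence of $C^1$ functions together with the weak* lower semicontinuity of $\TV$.

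\textbf{Part (i).} First I would extract from subadditivity the pointwise linear bound $\phi(t) \le \phi_0\,t$ for all $t\in\nonnegR$, which plays the role of \eqref{eq:tvphid-remedy-phi1}. Here one cannot differentiate, but $\phi(0)=0$ and subadditivity give $\phi(t) \le n\,\phi(t/n)$, hence $\phi(t)/t \le \phi(t/n)\big/(t/n)$, and letting $n\to\infty$ yields $\phi(t)/t \le \phi_0$. Next, given $u\in\BVspace(\Omega)$, I would take $\{u^k\}\subset C^\infty(\Omega)\cap\BVspace(\Omega)\subset C^1(\Omega)$ with $u^k\to u$ in $L^1(\Omega)$ and $\abs{Du^k}(\Omega)\to\abs{Du}(\Omega)$, i.e.\ the classical smooth strict approximation (valid on any open $\Omega$; see \cite{ambrosio2000fbv}), which in particular forces $u^k\weaktostar u$. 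Then $\widetilde\TVphic(u^k)=\int_\Omega\phi(\abs{\grad u^k})\d x \le \phi_0\,\TV(u^k)$, and since strictness gives $\TV(u^k)\to\TV(u)$, the definition of $\TVphic$ as a weak* lower semicontinuous envelope yields
\[
    \TVphic(u) \le \liminf_{k\to\infty}\widetilde\TVphic(u^k) \le \phi_0\,\TV(u),
\]
so $C=\phi_0$ works.

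\textbf{Part (ii).} Symmetrically, I would first produce $c>0$ with $\phi(t)\ge c\,t$ for all $t\in\nonnegR$, refining \eqref{eq:phi-estimate}: near $0$, $\phi_0>0$ gives $t_0>0$ with $\phi(t)\ge(\phi_0/2)t$ on $(0,t_0)$; near infinity, $\phi^\infty>0$ gives $t_1>t_0$ and $c_1>0$ with $\phi(t)\ge c_1 t$ for $t\ge t_1$; on $[t_0,t_1]$ monotonicity gives $\phi(t)\ge\phi(t_0)\ge(\phi_0/2)t_0\ge\bigl((\phi_0 t_0)/(2t_1)\bigr)\,t$; hence $c$ equal to the smallest of the three slopes works, with $\phi(0)=0$ covering $t=0$. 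This is exactly where the level-coercivity hypothesis $\phi^\infty>0$ is indispensable: without it the linear lower bound, and thus the conclusion, must fail, in keeping with Theorem~\ref{thm:tvphic}. Then, for any $\{u^i\}\subset C^1(\Omega)$ with $u^i\weaktostar u$, we have $\widetilde\TVphic(u^i)\ge c\,\TV(u^i)$, and weak* lower semicontinuity of $\TV$ gives $\liminf_i\widetilde\TVphic(u^i)\ge c\,\liminf_i\TV(u^i)\ge c\,\TV(u)$; since the approximating sequence was arbitrary, the definition of $\TVphic$ yields $c\,\TV(u)\le\TVphic(u)$.

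Both relaxation arguments are routine, so I do not anticipate a genuine obstacle. The only points needing care are (a) obtaining $\phi(t)\le\phi_0 t$ from subadditivity alone rather than from differentiability, and (b) choosing the right approximating sequence in each part — a \emph{strictly} convergent one for the upper bound, an \emph{arbitrary} weakly* convergent $C^1$ sequence for the lower bound. If one prefers not to invoke smooth strict approximation on a possibly unbounded $\Omega$, one can instead restrict to balls $\B(0,R)$ and let $R\upto\infty$, exactly as in the proof of Theorem~\ref{thm:tvphic}.
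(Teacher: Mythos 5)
Your proof is correct and follows essentially the same route as the paper: a pointwise linear upper bound $\phi(t)\le\phi_0 t$ combined with smooth \emph{strict} approximation for part (i), and a three-piece linear lower bound $\phi(t)\ge ct$ combined with weak* lower semicontinuity of $\TV$ along an \emph{arbitrary} approximating $C^1$ sequence for part (ii). Your derivation of $\phi(t)\le\phi_0 t$ from subadditivity alone (via $\phi(t)\le n\phi(t/n)$) is in fact a small improvement, since the paper merely points back to the proof of Proposition \ref{prop:tvphid-remedy}, whose argument uses the differentiability of $\phi$ — a hypothesis not assumed here.
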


\begin{remark}
    If we assume that $\phi$ is concave, the condition $\phi_0>0$ 
    in \ref{item:tvphic-remedy-ii} follows from the other assumptions.
\end{remark}

\begin{proof}
    The proof of the upper bound follows exactly as the
    upper bound in Proposition \ref{prop:tvphid-remedy},
    just replacing approximation in $\pwc(\Omega)$ by $C^1(\Omega)$.
    
    For the lower bound, first of all, we observe that 
    there exists $t^\infty>0$ such that
    $\phi(t) \ge (\phi^\infty/2) t$, ($t \ge t^\infty$).
    Secondly, there exists $t_0>0$ such that
    $\phi(t) \ge (\phi_0/2) t$, ($0 \le t \le t_0$).
    Since $\phi$ is increasing,
    $\phi(t) \ge \phi(t_0) \ge t \phi(t_0)/t_0$, ($t_0 \le t \le t_\infty$).
    Consequently
    \[
        \phi(t) \ge c t, \quad (t\ge 0),
        \quad
        \text{for } c \defeq \min\{\phi^\infty/2, \phi(t_0)/t_\infty, \phi^0/2\}.
    \]
    Therefore
    \[
        c\TV(u') \le \widetilde\TVphic(u'), \quad (u \in C^1(\Omega)).
    \]
    The claim now follows from the weak* lower semicontinuity of $\TV$
    as in the proof of Proposition \ref{prop:tvphid-remedy}.
\end{proof}

In fact, in most of the interesting cases we may prove a slightly stronger result.

\begin{theorem}
    \label{thm:tvphic-lin}
    Let $\phi: \nonnegR \to \nonnegR$ be concave 
    with $\phi(0)=0$ and $0 < \phi^\infty < \infty$.
    Suppose that $\Omega \subset \R^m$ has a Lipschitz boundary. Then
    \begin{equation}
        \label{eq:tvphic-form}
        \TVphic(u) = \phi^\infty\TV(u).
        %\int_\Omega \phi(\norm{\grad u(x)}) \d x + \phi^\infty\abs{D^s u}(\Omega),
        \quad
        (u \in \BVspace(\Omega)).
    \end{equation}
    %In particular $\TVphic(u)=0$ if $u$ is piecewise constant.
\end{theorem}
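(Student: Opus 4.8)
The plan is to prove the two inequalities $\TVphic(u)\ge\phi^\infty\TV(u)$ and $\TVphic(u)\le\phi^\infty\TV(u)$ separately. The lower bound is the easy half: concavity of $\phi$ together with $\phi(0)=0$ forces $t\mapsto\phi(t)/t$ to be nonincreasing on $(0,\infty)$, hence $\phi(t)\ge\phi^\infty t$ for all $t\ge0$. Consequently, for every sequence $u^i\in C^1(\Omega)$ with $u^i\weaktostar u$ one has $\widetilde\TVphic(u^i)=\int_\Omega\phi(\abs{\grad u^i})\d x\ge\phi^\infty\int_\Omega\abs{\grad u^i}\d x=\phi^\infty\TV(u^i)$, and the weak* lower semicontinuity of $\TV$ gives $\liminf_i\widetilde\TVphic(u^i)\ge\phi^\infty\TV(u)$. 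Taking the infimum over all admissible sequences yields $\TVphic(u)\ge\phi^\infty\TV(u)$.

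For the upper bound one cannot simply mollify $u$: the mollification keeps the absolutely continuous part essentially intact, contributing $\int_\Omega\phi(\abs{\grad u})\d x$, which is strictly larger than $\phi^\infty\int_\Omega\abs{\grad u}\d x$ whenever $\grad u\neq0$. The remedy is to first convert the absolutely continuous part into jump part. Using the strict density of $\pwc(\Omega)$ in $\BVspace(\Omega)$ \cite{ckp1999regularization}, pick $w_h\in\pwc(\Omega)$ with $w_h\weaktostar u$ and $\TV(w_h)\to\TV(u)$. Since $\TVphic$ is weakly* lower semicontinuous by construction, it then suffices to establish $\TVphic(w)\le\phi^\infty\TV(w)$ for every $w\in\pwc(\Omega)$; indeed this gives $\TVphic(u)\le\liminf_h\TVphic(w_h)\le\phi^\infty\lim_h\TV(w_h)=\phi^\infty\TV(u)$.

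To bound $\TVphic(w)$ for $w\in\pwc(\Omega)$, I would mollify as in the proof of Lemma \ref{lemma:tvphic}, but replace the zero-extension by a reflection-type BV extension $\bar w$ of $w$ across the Lipschitz boundary $\partial\Omega$; this $\bar w$ is again piecewise constant in a neighbourhood of $\closure\Omega$ and, crucially, satisfies $\abs{D\bar w}(\partial\Omega)=0$, so that no spurious boundary mass is created. Set $w_\epsilon\defeq(\rho_\epsilon*\bar w)|\Omega\in C^\infty(\closure\Omega)$; then $w_\epsilon\to w$ in $L^1(\Omega)$ and, since $\abs{Dw_\epsilon}(\Omega)$ stays bounded, $w_\epsilon\weaktostar w$. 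Decompose $\phi=\phi^\infty\,\id+\psi$ with $\psi(t)\defeq\phi(t)-\phi^\infty t$; concavity makes $\psi$ nonnegative, increasing, subadditive and lower semicontinuous with $\psi(0)=0$ and $\psi^\infty=0$ (so $\psi\in\Wc$ when $\phi$ is continuous). Because $Dw=D^jw$ is purely jump, $\abs{\grad w_\epsilon}\le\rho_\epsilon * \abs{D^j\bar w}$, and therefore
\[
    \int_\Omega\phi(\abs{\grad w_\epsilon})\d x
    \le\phi^\infty\int_\Omega\bigl(\rho_\epsilon * \abs{D^j\bar w}\bigr)\d x
      +\int_\Omega\psi\bigl(\rho_\epsilon * \abs{D^j\bar w}\bigr)\d x .
\]
By Fubini's theorem the first integral is at most $\abs{D^j\bar w}(\Omega+\B(0,\epsilon))\to\abs{D^j\bar w}(\closure\Omega)=\TV(w)$, where $\abs{D\bar w}(\partial\Omega)=0$ is used; and the second integral tends to $0$ by the singular-part estimate \eqref{eq:tvqc-singular-estim} from the proof of Lemma \ref{lemma:tvphic}, now applied to $\psi$ in place of $\phi$. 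Hence $\limsup_{\epsilon\downto0}\widetilde\TVphic(w_\epsilon)\le\phi^\infty\TV(w)$, which gives $\TVphic(w)\le\phi^\infty\TV(w)$ and, combined with the reduction above and the lower bound, completes the proof.

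The step I expect to require the most care is the boundary bookkeeping. In Lemma \ref{lemma:tvphic} the zero-extension produced the extra term $\phi^\infty\int_{\partial\Omega}\abs{w^-}\d\H^{m-1}$, which was invisible there because $\phi^\infty=0$ but which would spoil the sharp constant here; hence one genuinely needs an extension carrying no jump mass on $\partial\Omega$, which is where the Lipschitz regularity of $\partial\Omega$ enters (by locally flattening and reflecting). A lesser point is checking that $\psi=\phi-\phi^\infty\,\id$ is admissible for the singular-part estimate: all of the required properties follow from concavity of $\phi$, and if $\phi$ is merely concave and possibly discontinuous at $0$ one uses that that estimate relies only on lower semicontinuity of $\psi$, not on continuity.
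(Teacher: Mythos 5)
Your proposal is correct, and its overall skeleton matches the paper's: the lower bound from $\phi(t)\ge\phi^\infty t$ (concavity plus $\phi(0)=0$) combined with weak* lower semicontinuity of $\TV$, and the upper bound by strict approximation with piecewise constant functions followed by mollification and the weak* lower semicontinuity of $\TVphic$. Where you genuinely diverge is in how the sharp constant $\phi^\infty$ is extracted from the mollification step, and here your version is actually the more careful one. The paper simply reruns Lemma \ref{lemma:tvphic} with $\sigma>\phi^\infty$ and asserts \eqref{eq:tvqc-lin-singular-estim}; but ``proceeding as before'' literally yields $\sigma\,\omega_m\abs{D^s u_0}(K)$, where $\abs{D^s u_0}$ is the singular part of the \emph{zero extension} and hence contains the boundary jump $\abs{u^-}\H^{m-1}\restrict\BD\Omega$ as well as the dimensional factor $\omega_m$ -- both harmless in Lemma \ref{lemma:tvphic} where $\sigma\downto 0$, but fatal to the exact identity \eqref{eq:tvphic-form} once $\sigma$ only decreases to $\phi^\infty>0$. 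Your two fixes address precisely these points: the splitting $\phi=\phi^\infty\,\id+\psi$ with $\psi\in\Wc$ (up to continuity at $0$, where lower semicontinuity indeed suffices) lets you evaluate the linear part \emph{exactly} by Fubini, eliminating $\omega_m$, while the reflection-type extension with $\abs{D\bar w}(\BD\Omega)=0$ removes the spurious boundary mass (a test with $w\equiv 1$ shows the zero extension really does produce a nonzero contribution $\sim\phi^\infty\H^{m-1}(\BD\Omega)$, so this is not pedantry). A further minor economy in your route is that, by reducing directly to $\pwc(\Omega)$, you never need the absolutely continuous estimate \eqref{eq:tvqc-ac-estim} for the upper bound, whereas the paper first proves the intermediate bound \eqref{eq:tvphic-lin-up} for general $u$ and only then specialises. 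In short: same strategy, but your treatment of the boundary and of the constant repairs a genuine looseness in the paper's own argument.
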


\begin{proof}
    We first suppose that $\Omega$ is bounded.
    The proof of the upper bound
    \begin{equation}
        \label{eq:tvphic-lin-up}
        \TVphic(u) \le \int_\Omega \phi(|\grad u(x)|) \d x + \phi^\infty\abs{D^s u}(\Omega),
    \end{equation}
    is then a modification of Lemma \ref{lemma:tvphic}.
    The estimate \eqref{eq:tvqc-ac-estim} for the absolutely continuous part
    follows as before.
    For the singular part, we observe that \eqref{eq:phi0} holds for any $\sigma > \phi^\infty$. 
    Therefore, proceeding as before, we obtain in place of
    \eqref{eq:tvqc-singular-estim} the estimate
    \begin{equation}
        \label{eq:tvqc-lin-singular-estim}
        \lim_{\epsilon \downto 0}
        \int_{\R^m} \phi\left(\int_{\R^m} \rho_\epsilon(x-y) \d \abs{D^s u_0}(y)\right) \d x
        \le \sigma\abs{D^s u}(\Omega).
    \end{equation}
    Letting $\sigma \downto \phi^\infty$ and combining \eqref{eq:tvqc-ac-estim} 
    with \eqref{eq:tvqc-lin-singular-estim} we get \eqref{eq:tvphic-lin-up}.
    As in Theorem \ref{prop:tvphic-remedy}, we may extend this bound to a
    possibly unbounded $\Omega$.

    If $u \in C^1(\Omega)$, we may again approximate $u$ strictly in $\BVspace(\Omega)$ 
    by piecewise constant functions $\{u^i\}_{i=1}^\infty$. 
    By the lower semicontinuity of $\TVphic$ and \eqref{eq:tvphic-lin-up}, we then have
    \begin{equation}
        \label{eq:tvphic-linfty-up}
        \TVphic(u)
        \le
        \liminf_{i \to \infty} \phi^\infty \abs{D^s u}(\Omega)=\phi^\infty \abs{Du}(\Omega).
    \end{equation}
    Finally, we observe that by concavity
    \[
        \phi(t) \ge \phi^\infty t.
    \]
    Thus $\widetilde \TVphic(u) \ge \phi^\infty \abs{Du}(\Omega)$. 
    We immediately obtain \eqref{eq:tvphic-form} for $u \in C^1(\Omega)$.
    By strictly convergent approximation, we then extend the result to $u \in \BVspace(\Omega)$.
\end{proof}

\section{Discussion}
\label{sec:discuss}

Theorem \ref{thm:tvphic} and Theorem \ref{thm:tvphic-lin} show that we cannot
hope to have a simple weakly* lower semicontinuous non-convex total variation model 
as a prior for image gradient distributions. In fact, it follows from \cite{bouchitte1990new}, 
see also \cite[Section 5.1]{ambrosio2000fbv} and \cite[Theorem 5.14]{fonseca2007mmc}, 
that lower semicontinuity of the continuous $\TVphic$ model is only possible for 
convex $\phi$. The problem is: if $\phi^\infty$ is less than $\phi'(t)$, then 
image edges are always cheaper than smooth transitions. 
If $\phi^\infty=0$, they are so cheap that  we get a zero functional 
at the limit for a general class of functions. 
If $\phi^\infty > 0$ and $\phi$ is concave, then we get a factor of $\TV$ as result.
If $\phi$ is not concave, we still have the upper bound \eqref{eq:tvphic-linfty-up};
it may however be possible that some gradients are cheaper than jumps. This would in
particular be the case with Huber regularisation of $\phi(t)=t$. More about the jump
set of solutions to Huber-regularised as well as non-convex total variation
models may be read in \cite{tuomov-jumpset}.

In fact, in \cite{HiWu13_siims} Huber regularisation 
was used with $\phi(t)=t^q$ for $q \in (0, 1)$ for algorithmic reasons.
For small $\gamma>0$, this is defined as
\begin{equation}
    \label{eq:huber}
    \widetilde \phi(t) \defeq
    \begin{cases}
        \frac{1}{q}t^q-\frac{2-q}{2q}\gamma^q, & t > \gamma, \\
        \frac{1}{2}\gamma^{q-2}t^2, & t \in [0, \gamma].
    \end{cases}
\end{equation}
Then $\widetilde \phi(t) \le \phi(t)$, so that
\[
    \TVphic[\widetilde\phi] \le \TVphi[\phi] = 0.
\]
Therefore Huber regularisation provides no remedy in this case.

In contrast to the continuous $\TVphic$ model, according to Theorem \ref{thm:tvphid}, 
the discrete model works correctly for $\phi(t)=t^q$ and generally
$\phi \in \Wd$, if the desire is to force piecewise constant solutions 
to \eqref{eq:tvq-rof}. As we saw in the comments preceding
Proposition \ref{prop:tvphid-remedy}, it however does not
force piecewise constant solutions for some of the energies
$\phi$ typically employed in this context. Generally,
what causes piecewise constant solutions is the property
$\phi^0=\infty$. If one does
not desire piecewise constant solutions, one can therefore
use Huber regularisation or linearise $\phi$ for $t < \delta$.
The latter employs
\[
    \widetilde \phi(t)=\begin{cases}
                \phi(t)-\phi(\delta)+\phi'(\delta)\delta, & t > \delta, \\
                \phi'(\delta)t, & t \le \delta.
            \end{cases}
\]
Then $\phi(t) \le C t$ for some $C > 0$, so that $\TVphid(u) < \infty$ for
every $u \in \BVspace(\Omega)$. 
We also note that although this approach
defines a regularisation functional on all of $\BVspace(\Omega)$, it
cannot be used for modelling the distribution of gradients in
real images, the purpose of the $\TVphic$ model. 
In fact, as in the the $\TVphid$ model we cannot control the penalisation of $\grad u$ 
beyond a constant factor.

%Regarding Huber regularisation, we observe that $\widetilde\phi$
%defined in \eqref{eq:huber} also satisfies
%\[
%    \widetilde\phi(t) \le \frac{1}{2}\gamma^{q-1} t.
%\]
%Therefore
%\[
%    \TVphid[\widetilde\phi] \le \TV.
%\]
%Thus Huber-regularisation does not force piecewise constant 
%solutions either.

In summary, the $\TVphid$ model works as intended for $\phi \in \Wd$
-- it enforces piecewise constant solutions. The $\TVphic$ model however
is not theoretically sound in function spaces.
We will therefore next seek ways to fix it.

\section{Multiscale regularisation and area-strict convergence}
\label{sec:areastrict}

The problem with the $\TVphic$ model is that weak* lower semicontinuity
is too strong a requirement. We need a weaker type of lower semicontinuity,
or, in other words, a stronger type of convergence. Norm convergence in
BV is too strong; it would not be possible at all to approximate edges. Strict convergence
is also still too weak, as can be seen from the proof of Lemma \ref{lemma:tvphic}.
Strong convergence in $L^2$, which we could in fact obtain from
strict convergence for $\Omega \subset \R^2$ (see \cite{lions1985concentration,rindler2013strictly}), is also not enough, as a stronger form of gradient convergence is the important part.
A suitable mode of convergence is the so-called \emph{area-strict convergence}
\cite{delladio1991lower,kristensen2010relaxation}. For our purposes,
the following definition is the most appropriate one.

\begin{definition}
    Suppose $\Omega \subset \R^n$ with $n \ge 2$.
    The sequence $\{u^i\}_{i=1}^\infty \subset \BVspace(\Omega)$ converges
    to $u \in \BVspace(\Omega)$ \emph{area-strictly} if the sequence $\{U_i\}_{i=1}^\infty$ with
    $U^i(x) \defeq (x/|x|, u^i(x))$ converges strictly in $\BVspace(\Omega; \R^{n+1})$
    to $U(x) \defeq (x/|x|, u(x))$.
\end{definition}

In other words, $\{u^i\}_{i=1}^\infty$ converges to $u$ area-strictly if
$u^i \to u$ strongly in $L^1(\Omega)$, 
$Du^i \weaktostar Du$ weakly* in $\Meas(\Omega; \R^n)$, 
and $\mathcal{A}(u^i) \to \mathcal{A}(u)$ for the \term{area functional}
\[
    \mathcal{A}(u) \defeq
    \int_\Omega \sqrt{1+|\grad u(x)|^2} \d x
    + \abs{D^s u}(\Omega).
\]
Here we recall that $D^s u$ is the singular part of $D u$.
It can be shown that area-strict convergence is stronger than strict convergence,
but weaker than norm convergence. 

%In fact, we do not have to switch to $\SBVspace(\Omega)$ with the functional $\eta$.
In order to state a continuity result with respect to area-strict
convergence, we need a few definitions.
Specifically, we denote the Sobolev conjugate
\[
    1^* \defeq \begin{cases}
        n/(n-1), & n>1, \\
        \infty, & n=1,
    \end{cases}
\]
and define
\[
    u^\theta(x) \defeq
    \begin{cases}
        \theta u^+(x) + (1-\theta)u^-(x), & x \in J_u, \\
        \tilde u(x), & x \not\in S_u.
    \end{cases}
\]

In \cite{rindler2013strictly}, see also \cite{kristensen2010relaxation}, 
the following result is proved.

\begin{theorem}
    Let $\Omega$ be a bounded domain with Lipschitz boundary,
    $p \in [1, 1^*]$ if $n\ge 2$ and $p \in [1, 1^*)$ if $n = 1$.
    Let $f \in C(\Omega \times \R \times \R^n)$ satisfy
    \[
        \abs{f(x, y, A)} \le C(1+\abs{y}^p + \abs{A}),
        \quad
        ((x, y, A) \in \Omega \times \R \times \R^n),
    \]
    and assume the existence of $f^\infty \in C(\Omega \times \R \times \R^n)$,
    defined by
    \[
        f^\infty(x, y, A) \defeq \lim_{\substack{x' \to x\\y' \to y\\ A' \to A\\ t \to \infty}} \frac{f(x', y', tA')}{t}.
    \]
    Then the functional 
    \[
        \mathcal{F}(u) \defeq \int_\Omega f(x, u(x), \grad u(x)) \d x
        + \int_\Omega \int_0^1 f^\infty(x, u^\theta(x), \frac{d D^s u}{d\abs{D^s u}} (x)) \d \abs{D^s u}(x).
    \]
    is area-strictly continuous on $\BVspace(\Omega)$.
\end{theorem}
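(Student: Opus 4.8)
Although this is a cited result \cite{rindler2013strictly,kristensen2010relaxation}, let me indicate the argument. The plan is to rewrite $\mathcal{F}$ as the integral of a single continuous, positively $1$-homogeneous integrand against a lifted graph measure of $u$, and then to read off the conclusion from a Reshetnyak-type continuity theorem for strictly convergent vector measures — area-strict convergence being exactly the notion that makes that theorem applicable. The basic device is the \emph{perspective integrand}: writing points of $\R^{n+1}$ as $z=(z',z_{n+1})$ and letting $\mathbb{H}\defeq\{z\in\R^{n+1}\mid z_{n+1}>0\}$, set
\[
    \bar f(x,y,z)\defeq
    \begin{cases}
        z_{n+1}\,f\bigl(x,y,z'/z_{n+1}\bigr), & z_{n+1}>0,\\[2pt]
        f^\infty(x,y,z'), & z_{n+1}=0.
    \end{cases}
\]
From the growth bound on $f$, the positive $1$-homogeneity of $A\mapsto f^\infty(x,y,A)$, and the \emph{jointly} continuous limit defining $f^\infty$, one verifies that $\bar f$ extends to a continuous function on $\closure\Omega\times\R\times\closure{\mathbb{H}}$, positively $1$-homogeneous in $z$, with
\[
    \abs{\bar f(x,y,z)}\le C\bigl(\abs{z'}+(1+\abs{y}^p)\abs{z_{n+1}}\bigr).
\]
Thus the $\abs{y}^p$-growth touches only the component $z_{n+1}$, and it drops out entirely on $\{z_{n+1}=0\}$. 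This is the only place the hypotheses on $f$ and $f^\infty$ enter.

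Next I would attach to $u\in\BVspace(\Omega)$ the $\R^{n+1}$-valued Radon measure $\lambda_u$ on $\closure\Omega\times\R$ built by pushing $(\grad u,1)\,\L^n$ and $\bigl(\tfrac{d D^c u}{d\abs{D^c u}},0\bigr)\abs{D^c u}$ forward under $x\mapsto(x,\tilde u(x))$, and by laying over each jump point $x\in J_u$ the vertical segment $\theta\mapsto(x,u^\theta(x))$ with constant direction $\bigl(\tfrac{d D^s u}{d\abs{D^s u}}(x),0\bigr)$ and total weight $\abs{u^+(x)-u^-(x)}$. By construction $\abs{\lambda_u}(\closure\Omega\times\R)=\mathcal{A}(u)$, and one obtains the identity
\[
    \mathcal{F}(u)=\int_{\closure\Omega\times\R}\bar f\Bigl(x,y,\tfrac{d\lambda_u}{d\abs{\lambda_u}}(x,y)\Bigr)\d\abs{\lambda_u}(x,y).
\]
Indeed, $1$-homogeneity turns the absolutely continuous part into $\int_\Omega f(x,u,\grad u)\d x$; over a jump point the direction has $z_{n+1}=0$, so $\bar f$ is evaluated through $f^\infty$, and the parametrisation of the segment by $\theta$ produces precisely the inner integral $\int_0^1 f^\infty(x,u^\theta,\tfrac{d D^s u}{d\abs{D^s u}})\d\theta$ (the Cantor contribution being the degenerate-segment limit, where $u^\theta=\tilde u$). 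With this identity, area-strict convergence $u^i\to u$ is equivalent to $u^i\to u$ in $L^1(\Omega)$ — hence $\L^n$-a.e.\ along a subsequence — together with \emph{strict} convergence $\lambda_{u^i}\weaktostar\lambda_u$ in $\Meas(\closure\Omega\times\R;\R^{n+1})$ with $\abs{\lambda_{u^i}}(\closure\Omega\times\R)\to\abs{\lambda_u}(\closure\Omega\times\R)$; the substantive half — weak-$*$ convergence of the \emph{lifted} graphs, not merely of $Du^i$ — is exactly what the area-strict machinery of \cite{rindler2013strictly,kristensen2010relaxation} provides, and it is where $\mathcal{A}(u^i)\to\mathcal{A}(u)$ is used in place of $\abs{Du^i}(\Omega)\to\abs{Du}(\Omega)$.

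It then remains to show $\int\bar f\bigl(x,y,\tfrac{d\lambda_{u^i}}{d\abs{\lambda_{u^i}}}\bigr)\d\abs{\lambda_{u^i}}\to\int\bar f\bigl(x,y,\tfrac{d\lambda_u}{d\abs{\lambda_u}}\bigr)\d\abs{\lambda_u}$. For a bounded, $y$-independent integrand this is Reshetnyak's continuity theorem applied to the strictly convergent $\lambda_{u^i}$. To remove those two restrictions, approximate $\bar f$ uniformly on $\closure\Omega\times[-M,M]\times(\Sphere^n\cap\closure{\mathbb{H}})$ by finite sums $\sum_k\alpha_k(y)\beta_k(x,\zeta)$ (Stone--Weierstrass), apply Reshetnyak to each $\beta_k$, and pass $\alpha_k(u^i)\to\alpha_k(u)$ using the $\L^n$-a.e.\ convergence plus Egorov on the small-measure set where it is not uniform. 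The error from large values $\abs{u^i}>M$ is handled by the growth bound: against $\abs{\lambda_{u^i}}$ the term $(1+\abs{y}^p)\abs{z_{n+1}}$ collapses — on the absolutely continuous part $\abs{z_{n+1}}\d\abs{\lambda_{u^i}}$ is just $\L^n$ (the $\sqrt{1+\abs{\grad u^i}^2}$ cancelling) and $\abs{z_{n+1}}=0$ on the singular part — leaving $C\int_{\{\abs{u^i}>M\}}(1+\abs{u^i}^p)\d x$, which for $p<1^*$ tends to $0$ uniformly in $i$, since $\{\abs{u^i}^p\}$ is bounded in $L^{1^*/p}$ with $1^*/p>1$ (Sobolev embedding $\BVspace(\Omega)\hookrightarrow L^{1^*}(\Omega)$ on the Lipschitz domain), hence uniformly $L^1$-integrable; the term $\abs{z'}$ contributes at most $C\,\abs{\lambda_{u^i}}(\{\abs{y}>M\})$, uniformly small because strict convergence of $\abs{\lambda_{u^i}}$ forbids escape of mass. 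Assembling the pieces yields $\mathcal{F}(u^i)\to\mathcal{F}(u)$. (Alternatively, one may use that area-strict convergence forces $\{u^i\}$ to generate the \emph{elementary} generalised Young measure of $u$, and then quote the Young-measure representation of $\lim\mathcal{F}(u^i)$.)

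I expect the real difficulty to lie in this last reduction — in reconciling the \emph{merely strict}, not strong, convergence of the lifted measures with an integrand genuinely depending on the $L^1$-variable $u$ and carrying $\abs{y}^p$-growth. Two matters need care. First, the endpoint $p=1^*$: there $\{\abs{u^i}^{1^*}\}$ is only $L^1$-bounded, not automatically uniformly integrable, so one must upgrade $L^1$-convergence of $u^i$ to $L^{1^*}$-convergence of its absolutely continuous part — for $n=2$ this follows from the known fact that strict $\BVspace$-convergence implies strong $L^2$-convergence \cite{lions1985concentration,rindler2013strictly}, and in general it is a further consequence of area-strict convergence; for $n=1$ the endpoint is excluded by hypothesis. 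Second, the Stone--Weierstrass error must remain controlled when tested against the non-strongly-convergent family $\{\abs{\lambda_{u^i}}\}$; this again relies on the no-escape/no-concentration property of strictly convergent measures — including the exclusion of mass escaping to $\partial\Omega$, which is why the domain is assumed to have Lipschitz boundary.
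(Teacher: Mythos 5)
The paper does not prove this theorem at all: it is imported verbatim from \cite{rindler2013strictly,kristensen2010relaxation}, so there is no internal argument to compare yours against. Your outline is, for what it is worth, a faithful reconstruction of the proof strategy in those references --- perspective (recession) integrand $\bar f$, lifting of $u$ to a graph measure $\lambda_u$ with $\abs{\lambda_u}=\mathcal{A}(u)$, Reshetnyak continuity for the strictly convergent lifted measures, and a Stone--Weierstrass/Egorov/uniform-integrability argument to handle the $y$-dependence and the $\abs{y}^p$ growth --- and you correctly isolate the two delicate points (the endpoint $p=1^*$ and the equivalence of area-strict convergence with strict convergence of the liftings). The one step you defer entirely to the cited machinery, namely that $\mathcal{A}(u^i)\to\mathcal{A}(u)$ together with $u^i\to u$ in $L^1$ forces $\lambda_{u^i}\to\lambda_u$ strictly on $\closure\Omega\times\R$, is genuinely the technical core of the cited proofs and would need to be supplied to make this self-contained; as a reading of the literature rather than a replacement for it, the sketch is sound.
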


Applied to non-convex total variation, we immediately obtain the following.

\begin{corollary}
    \label{cor:tvphics-areastrict}
    Suppose $\phi \in C(\nonnegR)$, $\phi^\infty$ exists, and $\phi(t) \le C(1+ t)$, ($t \in \nonnegR$).
    Then the functional
    \[
        \TVphics(u) \defeq
        \int_\Omega \phi(|\grad u(x)|) \d x
        + \phi^\infty \abs{D^s u}(\Omega),
        \quad
        (u \in \BVspace(\Omega)),
    \]
    is area-strictly continuous on $\BVspace(\Omega)$.
\end{corollary}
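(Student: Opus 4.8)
The plan is to invoke the area-strict continuity theorem quoted just above with the integrand $f(x,y,A) \defeq \phi(\abs{A})$, and then to recognise that the functional $\mathcal{F}$ it produces is exactly $\TVphics$. First I would check the hypotheses on $f$. Continuity $f \in C(\Omega \times \R \times \R^n)$ is immediate since $\phi \in C(\nonnegR)$ and $A \mapsto \abs{A}$ is continuous. For the growth condition, the assumption $\phi(t) \le C(1+t)$ gives $\abs{f(x,y,A)} = \phi(\abs{A}) \le C(1+\abs{A}) \le C(1+\abs{y}^p+\abs{A})$; since area-strict convergence is only considered for $n \ge 2$, we may simply take $p = 1 \in [1, 1^*]$. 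The domain hypothesis of the theorem (that $\Omega$ is bounded with Lipschitz boundary) is exactly the standing assumption here.

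The key step is to compute the recession function and verify it exists and is continuous. I claim $f^\infty(x,y,A) = \phi^\infty\abs{A}$. For $A \ne 0$, writing $f(x',y',tA') = \phi(t\abs{A'})$, we factor $\phi(t\abs{A'})/t = \abs{A'}\cdot\phi(t\abs{A'})/(t\abs{A'})$; as $A' \to A \ne 0$ the factor $\abs{A'}$ stays bounded away from $0$, hence $t\abs{A'} \to \infty$, and the joint limit equals $\abs{A}\phi^\infty$. For $A = 0$ one must show the limit is $0$ regardless of the relative rates: passing to a subsequence, either $t\abs{A'} \to \infty$, in which case $\phi(t\abs{A'})/t = \abs{A'}\cdot\phi(t\abs{A'})/(t\abs{A'}) \to 0\cdot\phi^\infty = 0$, or $t\abs{A'}$ remains bounded, in which case $\phi(t\abs{A'})$ remains bounded by continuity of $\phi$ and is divided by $t \to \infty$. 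Either way the value is $0 = \phi^\infty\abs{0}$. Thus $f^\infty(x,y,A) = \phi^\infty\abs{A}$, which is plainly continuous, so the last hypothesis of the theorem holds.

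It then remains to evaluate $\mathcal{F}$ at this $f$. The bulk term is $\int_\Omega f(x,u(x),\grad u(x))\d x = \int_\Omega \phi(\abs{\grad u(x)})\d x$. Since $f^\infty$ does not depend on its middle argument, the inner variable $u^\theta$ plays no role; and because the Radon--Nikodym density $\frac{dD^s u}{d\abs{D^s u}}$ has modulus $1$ for $\abs{D^s u}$-almost every $x$, we get $f^\infty\bigl(x, u^\theta(x), \frac{dD^s u}{d\abs{D^s u}}(x)\bigr) = \phi^\infty$, so the singular term of $\mathcal{F}$ equals $\int_\Omega \int_0^1 \phi^\infty \d\theta\,\d\abs{D^s u}(x) = \phi^\infty\abs{D^s u}(\Omega)$. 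Adding the two contributions gives precisely $\TVphics(u)$, and the theorem then delivers area-strict continuity on $\BVspace(\Omega)$.

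The only non-routine point, hence the main obstacle, is the treatment of the recession limit at $A = 0$: one has to argue that the joint limit defining $f^\infty(x,y,0)$ exists and vanishes no matter how fast $A' \to 0$ relative to $t \to \infty$, which is where the subsequence split above is needed. Everything else is a direct match against the cited theorem.
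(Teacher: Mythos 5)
Your proof is correct and follows exactly the route the paper intends: the paper offers no written argument for this corollary (it simply says the result is ``immediately'' obtained from the preceding area-strict continuity theorem), and your verification of the hypotheses with $f(x,y,A)=\phi(\abs{A})$, the computation $f^\infty(x,y,A)=\phi^\infty\abs{A}$ including the careful joint-limit analysis at $A=0$, and the evaluation of the singular term via the unit-modulus polar density are precisely the omitted details. Nothing to add.
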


But how could we obtain area-strict convergence of an infimising sequence
of a variational problem? In \cite{tuomov-bd,tuomov-ap1} 
the following multiscale analysis functional $\eta$ was introduced
for scalar-valued measures $\mu \in \Meas(\Omega)$. 
Given $\eta_0>0$ and $\{\rho_\epsilon\}_{\epsilon>0}$,
a family of mollifiers satisfying the semigroup property
$\rho_{\epsilon+\delta} =\rho_\epsilon * \rho_\delta$,
$\eta$ can be defined as
\[
    \eta(\mu) \defeq \eta_0 \sum_{\ell=1}^\infty \int_{\R^n} (\abs{\mu} * \rho_{2^{-i}})(x) - \abs{\mu * \rho_{2^{-i}}}(x) \d x,
    \quad
    (\mu \in \Meas(\Omega)).
\]
If the sequence of measures
$\{\mu^i\}_{i=1}^\infty \subset \Meas(\Omega)$ satisfies 
$\sup_i \eta(\mu) < \infty$ and $\mu^i \weaktostar \mu$ 
weakly* in $\Meas(\Omega)$, then we have
$\abs{\mu^i}(\Omega) \to \abs{\mu}(\Omega)$. In essence,
the functional $\eta$ penalises the type of complexity of 
measures such as two approaching $\delta$-spikes of different sign,
which prohibits strict convergence.
%Strict convergence is however not enough to obtain
%lower semicontinuity with concave energies, as can 
%be seen from the proof of Lemma \ref{lemma:tvphic}
In Appendix \ref{app:vecteta}, we extend the strict
convergence results of \cite{tuomov-bd,tuomov-ap1} 
to vector-valued $\mu \in \Meas(\Omega; \R^N)$,
in particular the case $\mu=DU$ for $U$ the lifting
of $u$ as discussed above.

In order to bound in $\BVspace(\Omega)$ an infimising sequence 
of problems using $\TVphics$ as a regulariser,
we require slightly stricter assumptions on $\phi$.
These can usually, and particularly in the interesting case
$\phi(t)=t^q$, be easily satisfied by linearising $\phi$ 
above a cut-off point $M$ with respect to the function value. This will force $\phi^\infty > 0$,
which is not required for continuity with respect to 
area-strict convergence in its own right. We will later see that
such a cut-off can be justified by real gradient 
distributions and also argued in numerical experiments.

\begin{definition}
    We denote by $\Was$ the set of functions
    $\phi \in C(\nonnegR)$ such that $\phi^\infty$ exists, and
    for some $c, C > 0$ and $b \ge 0$ the following estimates hold true:
    \[
        c t - b \le \phi(t) \le C(1+ t), \quad (t \in \nonnegR).
    \]
\end{definition}

Now we may prove the following result, which shows that area-strict convergence
and the multiscale analysis functional $\eta$ provide a remedy for the
theoretical difficulties associated with the $\TVphic$ model.

\begin{theorem}
    \label{thm:tvphics}
    Suppose $\Omega \subset \R^n$ is bounded with Lipschitz boundary,
    and $\phi \in \Was$.
    Define $U(x) \defeq (1, u(x))$. Then the functional
    \[
        F(u) \defeq \TVphics(u) + \eta(DU)
    \]
    is weak* lower semicontinuous on $\BVspace(\Omega)$, and any sequence
    $\{u^i\}_{i=1}^\infty \subset L^1(\Omega)$ with
    \[
        \sup_i F(u^i) < \infty
    \]
    admits an area-strictly convergent subsequence.
\end{theorem}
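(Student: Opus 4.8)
The plan is to establish the two assertions separately, the common device being the promotion of weak* convergence to area-strict convergence once a bound on $\eta$ is available. First, I would extract coercivity in $\BVspace(\Omega)$ from the lower bound $ct - b \le \phi(t)$ built into $\phi \in \Was$: since it forces $\phi^\infty = \lim_{t \upto \infty}\phi(t)/t \ge c > 0$, we get $\TVphics(u) = \int_\Omega \phi(\abs{\grad u})\d x + \phi^\infty\abs{D^s u}(\Omega) \ge c'\,\TV(u) - b\L^n(\Omega)$ with $c' \defeq \min\{c, \phi^\infty\} > 0$, while $\eta(DU) \ge 0$ because $\abs{DU * \rho_\epsilon} \le \abs{DU} * \rho_\epsilon$ pointwise. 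Hence $\sup_i F(u^i) < \infty$ forces $\sup_i \TV(u^i) < \infty$; together with a uniform $L^1(\Omega)$ bound on the sequence — which in the intended use is supplied by a data-fidelity term added to $F$, and which one should otherwise include among the hypotheses — this yields boundedness in $\BVspace(\Omega)$, and by the compactness theorem for $\BVspace$ on the bounded Lipschitz domain $\Omega$ a subsequence converges weakly* to some $u \in \BVspace(\Omega)$, so in particular $u^i \to u$ in $L^1(\Omega)$ and $Du^i \weaktostar Du$.

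Next, along this subsequence also $\sup_i \eta(DU^i) < \infty$ (as $\eta(DU^i) \le F(u^i) + b\L^n(\Omega)$), and since the component of the lifting $U$ other than $u$ is fixed, $u^i \weaktostar u$ entails $DU^i \weaktostar DU$ in $\Meas(\Omega; \R^N)$. Here I would invoke the vector-valued multiscale result of Appendix~\ref{app:vecteta}, extending the scalar strict-convergence theorem of \cite{tuomov-bd,tuomov-ap1}: a weakly* convergent sequence of measures carrying a uniform bound on $\eta$ converges strictly. Applied to $DU^i$ this says precisely that $U^i \to U$ strictly in $\BVspace(\Omega; \R^{n+1})$, i.e. $u^i \to u$ area-strictly, which is the compactness assertion of the theorem.

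For the weak* lower semicontinuity of $F$, take any $u^i \weaktostar u$ in $\BVspace(\Omega)$; we may assume $\liminf_i F(u^i) < \infty$ and, passing to a subsequence realizing the $\liminf$ as a limit, the coercivity estimate gives $\sup_i \TV(u^i) < \infty$ and $\sup_i \eta(DU^i) < \infty$, so the previous step yields $u^i \to u$ area-strictly. Then $\TVphics(u^i) \to \TVphics(u)$ by the area-strict \emph{continuity} of $\TVphics$ (Corollary~\ref{cor:tvphics-areastrict}, whose hypotheses — $\phi$ continuous, $\phi^\infty$ existing, $\phi(t) \le C(1+t)$ — are part of the definition of $\Was$), while $\eta(DU) \le \liminf_i \eta(DU^i)$ by the weak* lower semicontinuity of $\eta$ from \cite{tuomov-bd,tuomov-ap1} and its vector-valued extension. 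Adding the two, $F(u) = \TVphics(u) + \eta(DU) \le \lim_i \TVphics(u^i) + \liminf_i \eta(DU^i) = \liminf_i F(u^i)$, which is the claimed lower semicontinuity.

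The hard part is exactly the promotion of weak* convergence to \emph{area}-strict convergence in the second step: plain strict convergence would be useless, since, as the proof of Lemma~\ref{lemma:tvphic} shows, the singular part becomes free under it, so it is essential both that $\eta$ be evaluated on the graph-type lifting (the one entering the definition of area-strict convergence) and that the multiscale machinery of \cite{tuomov-bd,tuomov-ap1} survive the passage to vector-valued measures — precisely the work relegated to Appendix~\ref{app:vecteta}. Everything else is the standard $\BVspace$ compactness argument of the first step and the routine combination, in the third step, of the already-available continuity of $\TVphics$ and lower semicontinuity of $\eta$.
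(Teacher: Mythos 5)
Your proof is correct and follows essentially the same route as the paper's: coercivity of $\TVphics$ from the lower bound in $\Was$ gives a $\TV$ bound, the vector-valued multiscale result (Theorem \ref{thm:eta}) upgrades weak* convergence with $\sup_i\eta(DU^i)<\infty$ to strict convergence of the liftings, i.e.\ area-strict convergence, and then Corollary \ref{cor:tvphics-areastrict} together with weak* lower semicontinuity of $\eta$ yields lower semicontinuity of $F$. Your remark that a uniform $L^1(\Omega)$ bound is additionally needed for weak* compactness (constants $u^i\equiv i$ have bounded $F$ but no convergent subsequence) is a fair observation about a point the paper's own proof glosses over, and is resolved exactly as you say by the data term in Corollary \ref{cor:tvphics-gmin}.
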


\begin{proof}
    Suppose $\{u^i\}_{i=1}^\infty$ converges weakly* to $u \in \BVspace(\Omega)$.
    Then it follows that $\{U^i\}_{i=1}^\infty$ converge weakly* to $U \in \BVspace(\Omega; \R^{m+1})$.
    If $\liminf_{i \to \infty} \eta(DU^i) = \infty$, we clearly have lower semicontinuity of $F$.
    By switching to an unrelabelled subsequence, we may therefore assume that
    $\sup_i \eta(DU^i) < \infty$. It follows from Theorem \ref{thm:eta} in 
    the Appendix that
    $\abs{DU^i}(\Omega) \to \abs{DU}(\Omega)$. In other words, $\{u^i\}_{i=1}^\infty$ 
    converges area-strictly to $u$.  Applying Corollary \ref{cor:tvphics-areastrict}
    and the weak* lower semicontinuity of $\eta$, we now see that
    \[
        F(u) \le \liminf_{i \to \infty} F(u^i).
    \]
    Thus weak* lower semicontinuity holds true.
    
    Next suppose that $\{u^i\}_{i=1}^\infty \subset L^1(\Omega)$ with
    $\sup_i F(u^i) < \infty$. Since $ct-b \le \phi(t)$ and $\Omega$ is bounded, it follows that
    $\sup_i \TV(u^i) < \infty$. The sequence therefore admits a subsequence, unrelabelled without loss of generality,
    which converges weakly* to some $u \in \BVspace(\Omega)$. Hence, the fact that
    $\{u^i\}_{i=1}^\infty$ admits an area-strictly convergent subsequence now 
    follows as in the previous paragraph.
\end{proof}

%\begin{remark}
%    We require $\phi^\infty > 0$ through $\phi(t) \ge ct - b$.
%    Observe how this is only used to obtain the existence of a weak* and then
%    area-strictly convergent subsequence. It is not needed for area-strict 
%    continuity as such.
%\end{remark}
%
We immediately deduce the following corollary.

\begin{corollary}
    \label{cor:tvphics-gmin}
    %Suppose the assumptions of Theorem \ref{thm:tvphics} hold,
    Suppose $\Omega \subset \R^n$ is bounded with Lipschitz boundary,
    $\phi \in \Was$, $J: \BVspace(\Omega) \to \R$ is convex, 
    proper, and weakly* lower semicontinuous, and $J$ satisfies for some $C>0$
    the coercivity condition
    \[
        J(u) \ge C(\norm{u}_{L^1(\Omega)}-1).
    \]
    Then the functional
    \begin{equation}
        \label{eq:areastrict-model}
        G(u) \defeq J(u) + \alpha \TVphics(u) + \eta(DU),
        \quad
        (u \in \BVspace(\Omega)),
    \end{equation}
    admits a minimiser $u \in \BVspace(\Omega)$.
\end{corollary}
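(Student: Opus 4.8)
The plan is to run the direct method of the calculus of variations on $\BVspace(\Omega)$, using Theorem \ref{thm:tvphics} to supply the compactness and Corollary \ref{cor:tvphics-areastrict} together with the lower semicontinuity of $J$ and $\eta$ to pass to the limit.

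First I would check that $G$ is proper and bounded below on $\BVspace(\Omega)$. Taking $u=0$, the lift $U\equiv(1,0)$ is constant, so $DU=0$, $\eta(DU)=0$, and $\TVphics(0)=\phi(0)\L^m(\Omega)$ is finite (the bounds defining $\Was$ force $-b\le\phi(0)\le C$); since $J$ is real-valued, $G(0)<\infty$. For the lower bound, the estimate $ct-b\le\phi(t)$ yields $\phi^\infty\ge c$, hence
\[
    \TVphics(u)\ge c\,\TV(u)-b\,\L^m(\Omega),
\]
while $\eta(DU)\ge 0$ and $J(u)\ge C(\norm{u}_{L^1(\Omega)}-1)\ge -C$; adding these shows $G$ is bounded below. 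We may therefore pick a minimising sequence $\{u^i\}_{i=1}^\infty\subset\BVspace(\Omega)$ with $G(u^i)\to\inf G$, so in particular $\sup_i G(u^i)<\infty$.

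Next I would extract a convergent subsequence. From $\sup_i G(u^i)<\infty$ and the three one-sided bounds just used, each of $J(u^i)$, $\TVphics(u^i)$, and $\eta(DU^i)$ is bounded above (solve for one term, bound the remaining two below), so $\sup_i\bigl(\TVphics(u^i)+\eta(DU^i)\bigr)<\infty$. Theorem \ref{thm:tvphics} then produces a subsequence, not relabelled, converging area-strictly to some $u\in\BVspace(\Omega)$. Since area-strict convergence is stronger than strict convergence, it implies in particular $u^i\weaktostar u$ weakly* in $\BVspace(\Omega)$, and hence $DU^i\weaktostar DU$ weakly* in $\Meas(\Omega;\R^{m+1})$.

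Finally I would pass to the limit term by term. Because $\phi\in\Was\subset C(\nonnegR)$ with $\phi^\infty$ existing and $\phi(t)\le C(1+t)$, Corollary \ref{cor:tvphics-areastrict} applies and gives $\TVphics(u^i)\to\TVphics(u)$. The functional $\eta$ is weak* lower semicontinuous, so $\eta(DU)\le\liminf_i\eta(DU^i)$, and by hypothesis $J$ is weak* lower semicontinuous, so $J(u)\le\liminf_i J(u^i)$. Adding these three relations yields $G(u)\le\liminf_i G(u^i)=\inf G$, so $u$ is a minimiser. The only non-routine ingredient is the passage $\sup_i\eta(DU^i)<\infty\Rightarrow$ area-strict convergence along a subsequence, but that is precisely the content of Theorem \ref{thm:tvphics} (which in turn rests on the Appendix), so no further obstacle arises in assembling the corollary.
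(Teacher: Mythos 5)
Your proof is correct and is exactly the argument the paper intends: the corollary is stated as an immediate consequence of Theorem \ref{thm:tvphics}, and your direct-method argument (properness and boundedness below via the $\Was$ bounds and the coercivity of $J$, compactness of the minimising sequence from $\sup_i\bigl(\TVphics(u^i)+\eta(DU^i)\bigr)<\infty$ via Theorem \ref{thm:tvphics}, and term-by-term lower semicontinuity using Corollary \ref{cor:tvphics-areastrict} and the weak* lower semicontinuity of $\eta$ and $J$) is precisely the intended deduction. No gaps.
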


\begin{remark}
We can, for example, take $J(u) = \frac{1}{2}\norm{z-u}_{L^2(\Omega)}^2$.
\end{remark}

Observe that
\[
    \eta(DU) = \eta_0 \sum_{\ell=1}^\infty \eta_\ell(DU),
\]
where, for $\epsilon_\ell>0$,
\[
    \begin{split}
    \eta_\ell(DU) 
    & \defeq 
        \int_{\R^n} 
        (\rho_{\epsilon_\ell} * \abs{DU})(x)
         - \abs{\rho_{\epsilon_\ell} * DU}(x) \d x
         \\
         &
         =
         \abs{D^s u}(\Omega)
         +
        \int_{\R^n} 
        \sqrt{1+ \abs{\grad u(x)}^2}
         - \sqrt{1+\abs{(\rho_{\epsilon_\ell} * Du)(x)}^2} \d x.
    \end{split}
\]
In particular, if $u \in W^{1,1}(\Omega)$, then we obtain
with $\grad_\epsilon u \defeq \rho_\epsilon * \grad u$ the 
expression
\[
    \eta_\ell(DU) 
    =
        \int_{\R^n} 
        \sqrt{1+ \abs{\grad u(x)}^2}
         - \sqrt{1+\abs{\grad_{\epsilon_\ell} u(x)}^2} \d x
\]
and the estimate
\[
    \eta_\ell(DU) 
    \le
    \int_{\R^n} \sqrt{\bigl|\abs{\grad u(x)}^2-\abs{\grad_{\epsilon_\ell} u(x)}^2\bigr|} \d x.
    %\le
    %\bar \eta_\ell(\grad u).
\]

%In order to avoid extreme stair-casing, we should still have $\phi^\infty > 0$.

The following proposition shows that, in infimising sequences, we may
ignore terms from $\eta$. This justifies the associated numerical approximation.

\begin{proposition}
    \label{prop:partial-eta-sum-limit}
    %Suppose the assumptions of Corollary \ref{cor:tvphics-gmin} hold.
    Suppose $\Omega \subset \R^n$ is bounded with Lipschitz boundary,
    $\phi \in \Was$, and that $J: \BVspace(\Omega) \to \R$ is as in
    Corollary \ref{cor:tvphics-gmin}.
    Let $K_i \in \N^+$ and $\epsilon_i > 0$, $i=1,2,3,\ldots$, satisfy
    \[
        \lim_{i \to\infty} K_i = \infty,
        \quad
        \text{and}
        \quad
        \lim_{i \to\infty} \epsilon_i = 0.
    \]
    Suppose further that $\{u^i\}_{i=1}^\infty \subset \BVspace(\Omega)$ satisfies
    \[
        J(u^i) + \alpha \TVphics(u^i) + \sum_{\ell=1}^{K_i} \eta_\ell(DU^i)
        \le \inf_{u \in \BVspace(\Omega)} G(u) + \epsilon_i,
        \quad
        (i=1,2,3,\ldots).
    \]
    Then we can find $\hat u \in \BVspace(\Omega)$ and a subsequence
    of $\{u^i\}_{i=1}^\infty$, unrelabelled, 
    such that $u^i \to \hat u$ area-strictly, and $\hat u$ minimises $G$.
\end{proposition}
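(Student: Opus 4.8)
The plan is to follow the proof of Theorem~\ref{thm:tvphics} as closely as possible; the one genuinely new ingredient is that we must extract area-strict convergence from a bound on the \emph{partial} sums $\sum_{\ell=1}^{K_i}\eta_\ell(DU^i)$ rather than on the full functional $\eta(DU^i)$. \emph{Step 1 (a uniform $\BVspace(\Omega)$ bound).} Let $m\defeq\inf_{u\in\BVspace(\Omega)}G(u)$, which is finite and attained by Corollary~\ref{cor:tvphics-gmin}. Since every $\eta_\ell\ge0$, the hypothesis gives $J(u^i)+\alpha\TVphics(u^i)\le m+\epsilon_i\le m+\epsilon_1$ for all $i$. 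From $\phi\in\Was$ we have $\phi(t)\ge ct-b$ with $c>0$, hence $\phi^\infty\ge c>0$ and $\TVphics(u)\ge c\,\TV(u)-b\,\L^n(\Omega)$; together with $J(u)\ge C(\norm{u}_{L^1(\Omega)}-1)$ and the fact that each of $J$ and $\alpha\TVphics$ is bounded below on $\BVspace(\Omega)$, this yields $\sup_i\norm{u^i}_{\BVspace(\Omega)}<\infty$ and a finite constant $C'\ge\sup_i\sum_{\ell=1}^{K_i}\eta_\ell(DU^i)$. Passing to a subsequence (not relabelled) we may assume $u^i\weaktostar\hat u$ in $\BVspace(\Omega)$ with $u^i\to\hat u$ in $L^1(\Omega)$, $DU^i\weaktostar D\hat U$, and $\abs{DU^i}(\Omega)\to M$ for some $M\in\R$.

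\emph{Step 2 (the mass defect vanishes).} Set $\Delta\defeq M-\abs{D\hat U}(\Omega)$; by weak* lower semicontinuity of the total variation, $\Delta\ge0$. Fix a scale $\ell$. As in the proof of Theorem~\ref{thm:eta} (see also \cite{tuomov-bd,tuomov-ap1}), for the fixed mollifier $\rho_{\epsilon_\ell}$ one has $\norm{\rho_{\epsilon_\ell}*DU^i}_{L^1}\to\norm{\rho_{\epsilon_\ell}*D\hat U}_{L^1}$ as $i\to\infty$, since these convolutions converge pointwise, are uniformly bounded, and are supported in a fixed bounded set. Hence
\[
    \eta_\ell(DU^i)=\abs{DU^i}(\Omega)-\norm{\rho_{\epsilon_\ell}*DU^i}_{L^1}\ \to\ M-\norm{\rho_{\epsilon_\ell}*D\hat U}_{L^1}=\eta_\ell(D\hat U)+\Delta,
\]
so the defect $\Delta$ appearing in the limit is the \emph{same at every scale}. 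Now fix $L\in\N$. For all $i$ large enough $K_i\ge L$, so $\sum_{\ell=1}^L\eta_\ell(DU^i)\le\sum_{\ell=1}^{K_i}\eta_\ell(DU^i)\le C'$; letting $i\to\infty$ and using the displayed limits gives $\sum_{\ell=1}^L\eta_\ell(D\hat U)+L\Delta\le C'$. Since each $\eta_\ell(D\hat U)\ge0$, this forces $L\Delta\le C'$ for every $L\in\N$, and therefore $\Delta=0$. Consequently $\abs{DU^i}(\Omega)\to\abs{D\hat U}(\Omega)$, i.e.\ $u^i\to\hat u$ area-strictly; moreover $\sum_{\ell=1}^\infty\eta_\ell(D\hat U)\le C'<\infty$, so $\eta(D\hat U)<\infty$ and $\hat u$ is admissible, and the displayed limit now simplifies to $\eta_\ell(DU^i)\to\eta_\ell(D\hat U)$ for every $\ell$.

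\emph{Step 3 (limit passage and optimality).} By Corollary~\ref{cor:tvphics-areastrict} the functional $\TVphics$ is area-strictly continuous, so $\TVphics(u^i)\to\TVphics(\hat u)$; by weak* lower semicontinuity, $J(\hat u)\le\liminf_i J(u^i)$; and since $\eta_\ell(DU^i)\to\eta_\ell(D\hat U)$ with $\sum_{\ell=1}^L\eta_\ell(DU^i)\le\sum_{\ell=1}^{K_i}\eta_\ell(DU^i)$ for large $i$, letting first $i\to\infty$ and then $L\to\infty$ gives $\eta(D\hat U)\le\liminf_i\sum_{\ell=1}^{K_i}\eta_\ell(DU^i)$. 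Adding these three estimates, and using that $\TVphics(u^i)$ converges so that the $\liminf$s combine,
\[
    G(\hat u)\le\liminf_{i\to\infty}\left(J(u^i)+\alpha\TVphics(u^i)+\sum_{\ell=1}^{K_i}\eta_\ell(DU^i)\right)\le\liminf_{i\to\infty}(m+\epsilon_i)=m.
\]
Since $m=\inf_{u\in\BVspace(\Omega)}G(u)\le G(\hat u)$, we conclude $G(\hat u)=m$, so $\hat u$ minimises $G$.

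\emph{Main obstacle.} The one real difficulty is the vanishing of $\Delta$ in Step~2: the usual route through Theorem~\ref{thm:eta} needs $\sup_i\eta(DU^i)<\infty$, which is not available, only the truncations with $K_i\to\infty$ being controlled. The observation that rescues the argument is that the mass defect $\Delta=M-\abs{D\hat U}(\Omega)$ is scale-independent, so a uniform bound $C'$ on $\sum_{\ell=1}^{K_i}\eta_\ell$ already forces $L\Delta\le C'$ for arbitrarily large $L$. The remaining technical points — the fixed-scale mollification continuity $\norm{\rho_{\epsilon_\ell}*DU^i}_{L^1}\to\norm{\rho_{\epsilon_\ell}*D\hat U}_{L^1}$ and the weak* convergence of the liftings $DU^i\weaktostar D\hat U$ — are inherited verbatim from the proof of Theorem~\ref{thm:eta}.
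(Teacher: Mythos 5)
Your overall architecture is the paper's: coercivity of $\phi\in\Was$ gives a uniform $\TV$ bound and a weak* limit $\hat u$, the $\eta$ bound upgrades weak* to area-strict convergence, and Corollary \ref{cor:tvphics-areastrict} plus lower semicontinuity closes the argument. Where you diverge is that the paper delegates the upgrade step to Lemma \ref{lemma:eta-ki} (which is stated precisely for partial sums $\sum_{\ell=1}^{K_i}\eta_\ell$ with $K_i\to\infty$, after first securing $\eta(D\hat U)<\infty$ from lower semicontinuity of each $\eta_\ell$), whereas you re-derive it inline via the observation that the mass defect $\Delta=M-\abs{D\hat U}(\Omega)$ appears identically at every scale, so $L\Delta\le C'$ for all $L$ forces $\Delta=0$. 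That mechanism is genuinely the same one operating inside the proof of Lemma \ref{lemma:eta-ki} (compare \eqref{eq:divergence-regul-diff} with $\ell$ large), and it is a clean way to see why a bound on truncated sums with $K_i\to\infty$ suffices.

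There is, however, one step that is not justified as written: the claim that $\norm{\rho_{\epsilon_\ell}*DU^i}_{L^1}\to\norm{\rho_{\epsilon_\ell}*D\hat U}_{L^1}$ follows from $DU^i\weaktostar D\hat U$ in $\Meas(\Omega;\R^{n+1})$. The test function $\tau_x\rho_{\epsilon_\ell}$ does not vanish on $\BD\Omega$, so this pointwise convergence requires weak* convergence of the zero-extensions in $\Meas(\R^n;\R^{n+1})$, and the $\Meas(\R^n)$-limit $\bar\mu$ of the zero-extensions may charge $\BD\Omega$; in general one only has $\bar\mu\restrict\Omega=D\hat U$. (Take $\Omega=(0,1)$ and $u^i=\chi_{(0,1/i)}$: then $Du^i=-\delta_{1/i}\weaktostar 0$ in $\Meas(\Omega)$ but $\rho_{\epsilon_\ell}*Du^i\to-\rho_{\epsilon_\ell}(\freevar)\ne 0$, so your displayed limit fails and your $\Delta$ is not what the defect argument actually controls.) This is exactly the distinction Lemma \ref{lemma:eta-ki} draws: the exact limit \eqref{eq:eta-ell-limit} is asserted only \emph{provided the weak* convergences hold in} $\Meas(\R^m;\R^N)$, and in the general case its proof passes to a further subsequence converging in $\Meas(\R^m)$ and settles for lower semicontinuity via a triangle inequality. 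So your remark that the technical points are ``inherited verbatim'' from Theorem \ref{thm:eta} is precisely where the inheritance breaks down. To repair Step 2, either run your defect argument with $\bar\mu$ in place of $D\hat U$ — which yields $\abs{\bar\mu}(\R^n)=M$ and $\eta(\bar\mu)<\infty$ but still leaves you to exclude mass of $\bar\mu$ on $\BD\Omega$ — or follow the paper: first deduce $\sum_{\ell=1}^{K_j}\eta_\ell(D\hat U)\le L$ for every $j$ from lower semicontinuity of each $\eta_\ell$ (hence $\eta(D\hat U)<\infty$), and then invoke Lemma \ref{lemma:eta-ki} wholesale for the area-strict convergence and the limit passage. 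Steps 1 and 3 are otherwise correct and match the paper.
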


\begin{proof}
    Let $L \defeq \inf_{u \in \BVspace(\Omega)} G(u)$.
    Since there is nothing to prove if $L = \infty$,
    we may assume $L < \infty$. Then we have
    \[
        c\TV(u^i) - b \L^n(\Omega) \le \TVphics(u^i).
    \]
    This yields
    \[
        J(u^i) + \alpha c \TV(u^i) \le \alpha b \L^n(\Omega) + L + \epsilon_i.
    \]
    It follows for a subsequence, unrelabelled,
    that $u^i \weaktostar \hat u$ weakly* for some
    $\hat u \in \BVspace(\Omega)$.
    By the weak* lower semicontinuity of $\eta_\ell$, see
    Theorem \ref{thm:eta}, we then have
    \[
        \sum_{\ell=1}^{K_j} \eta_\ell(D\hat U)
        \le \liminf_{i \to\infty} \sum_{\ell=1}^{K_j} \eta(DU^i)
        \le L,
        \quad
        (j=1,2,3,\ldots).
    \]
    It follows that
    \[
        \eta(D\hat U) = \sum_{\ell=1}^\infty \eta_\ell(D\hat U) \le L.
    \]
    Using Lemma \ref{lemma:eta-ki} with $\mu^i=DU^i$ and $\mu=DU$,
    we see that $u^i \to \hat u$ area-strictly, and that
    \[
        u \mapsto
            J(u) + \alpha \TVphics(u) + \sum_{\ell=1}^{K_j} \eta_\ell(DU)
    \]
    is area-strictly lower semicontinuous for
    for each fixed $j = 1,2,3,\ldots$. This shows that
    $G(\hat u) \le L$. As a consequence, $\hat u$ minimises $G$.
\end{proof}

\section{Remarks on alternative remedies}
\label{sec:alternative}

We now discuss two alternative approaches to make the $\TVphic$ model work in
the limit. These are based on compactifying the differential operator and 
on working in $\SBVspace(\Omega)$, respectively. As we only intend to
demonstrate alternative possibilities, we stay brief here. Hence the proofs have been placed
in the appendix.

\begin{remark}[Compact operators]
Area-strict convergence is not the only possibility to make the $\TVphic$ model function; 
another way to understand the problems with the basic $\TVphic$ model is that 
the operator $\grad$ is not compact. One way to obtain 
a compact operator is by convolution. This is the contents of the following
result.

\begin{theorem}
    \label{thm:compact}
    Let $\{\rho_\epsilon\}_{\epsilon >0}$ be a family of mollifiers,
    $\Omega \subset \R^n$ open,
    and $\phi: \nonnegR \to \nonnegR$ increasing, subadditive and
    continuous with $\phi(0)=0$.
    Fix $\epsilon >0$, and define $D_\epsilon: L^1(\Omega) \to L^1(\R^n; \R^n)$
    by
    \[
        D_\epsilon u \defeq \rho_\epsilon * Du.
    \]
    Then
    \[
        \TVphic[\phi,\epsilon](u) \defeq \int_{\R^n} \phi(|D_\epsilon u(x)|) \d x,
        \quad (u \in \BVspace(\Omega)),
    \]   
    is lower semicontinuous with respect
    to weak* convergence in $\BVspace(\Omega)$.
    Moreover
    %\begin{equation}
    %    \label{eq:tvphic-epsilon-bound}
    %    \TVphic[\phi,\epsilon](u) \le \widetilde \TVphic(u),
    %    \quad (u \in C^1(\Omega)).
    %\end{equation}
    %and
    \begin{equation}
        \label{eq:tvphic-epsilon-conv}
        \lim_{\epsilon \downto 0} \TVphic[\phi,\epsilon](u) = \widetilde \TVphic(u),
        \quad (u \in C^1(\Omega)).
    \end{equation}
\end{theorem}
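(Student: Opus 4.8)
The plan is to prove the two assertions in turn, exploiting the concrete form of $D_\epsilon$: writing $u_0\in L^1(\R^n)$ for the extension of $u$ by zero, one has $D_\epsilon u=\rho_\epsilon*Du_0=(\grad\rho_\epsilon)*u_0$, so that $D_\epsilon$ is a \emph{bounded} linear operator from $L^1(\Omega)$ into $L^1(\R^n;\R^n)$ --- and, as $\grad\rho_\epsilon$ is a fixed bounded compactly supported kernel, a \emph{compact} one whenever $\Omega$ is bounded, which is the underlying point of the remark. Only boundedness is needed for the lower semicontinuity. Indeed, if $u^i\weaktostar u$ weakly* in $\BVspace(\Omega)$, then in particular $u^i\to u$ in $L^1(\Omega)$, hence $u^i_0\to u_0$ in $L^1(\R^n)$, and by Young's inequality
\[
    \norm{D_\epsilon u^i-D_\epsilon u}_{L^\infty(\R^n;\R^n)}\le\norm{\grad\rho_\epsilon}_{L^\infty}\norm{u^i_0-u_0}_{L^1(\R^n)}\longrightarrow0 .
\]
Thus $D_\epsilon u^i\to D_\epsilon u$ uniformly, so by continuity of $\phi$ and $\phi(0)=0$ we get $\phi(\abs{D_\epsilon u^i(x)})\to\phi(\abs{D_\epsilon u(x)})$ for every $x\in\R^n$; since $\phi\ge0$, Fatou's lemma gives $\TVphic[\phi,\epsilon](u)\le\liminf_{i\to\infty}\TVphic[\phi,\epsilon](u^i)$, i.e.\ weak* lower semicontinuity.

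For the limit $\epsilon\downto0$, fix $u\in C^1(\Omega)$. For $\L^n$-a.e.\ $x\in\Omega$ one has $\B(x,\epsilon)\subset\Omega$ once $\epsilon$ is small, and then $D_\epsilon u(x)=\int\rho_\epsilon(x-y)\grad u(y)\d y\to\grad u(x)$ by continuity of $\grad u$; while $D_\epsilon u$ vanishes outside an $\epsilon$-neighbourhood of $\closure\Omega$. Hence $\phi(\abs{D_\epsilon u})\to\phi(\abs{\grad u})\chi_\Omega$ $\L^n$-a.e., and Fatou yields the lower bound $\widetilde{\TVphic}(u)=\int_\Omega\phi(\abs{\grad u})\d x\le\liminf_{\epsilon\downto0}\TVphic[\phi,\epsilon](u)$. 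For the matching upper bound I would first reduce to bounded $\Omega$ (apply the estimate to $u|\B(0,R)$ and let $R\upto\infty$, the right-hand side being monotone in $R$), and then use subadditivity and monotonicity of $\phi$ together with the splitting
\[
    \abs{D_\epsilon u(x)}\le\int_{\R^n}\rho_\epsilon(x-y)\d\abs{Du_0}(y)\le(\rho_\epsilon*\abs{\grad u}\chi_\Omega)(x)+(\rho_\epsilon*\abs{D^s u_0})(x)
\]
to estimate the two pieces exactly as in the proof of Lemma \ref{lemma:tvphic}. The absolutely continuous piece is controlled by the reverse-Fatou computation \eqref{eq:tvqc-ac-est1}--\eqref{eq:tvqc-ac-estim}, which applies verbatim because an increasing subadditive $\phi$ with $\phi(0)=0$ automatically satisfies the linear bound $\phi(t)\le\phi(1)(1+t)$ of \eqref{eq:phi-a-b-approx}; the boundary piece is handled like the singular term \eqref{eq:tvqc-singular-estim}, and contributes nothing in the limit in the regime of interest (energies with $\phi^\infty=0$, as for the $\TVphic$ model; if $\phi^\infty>0$ one uses instead the estimate from the proof of Theorem \ref{thm:tvphic-lin}). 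Combining, $\limsup_{\epsilon\downto0}\TVphic[\phi,\epsilon](u)\le\int_\Omega\phi(\abs{\grad u})\d x=\widetilde{\TVphic}(u)$, which with the Fatou lower bound proves \eqref{eq:tvphic-epsilon-conv}.

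The main obstacle is the last upper bound: one must control the mollified measure $\rho_\epsilon*Du_0$ in the shrinking $\epsilon$-collar of $\partial\Omega$, which is precisely the point where the absolutely continuous / singular-part analysis of Lemma \ref{lemma:tvphic} (and of Theorem \ref{thm:tvphic-lin}) must be imported; the remaining ingredients --- Young's inequality, Fatou's lemma, continuity of $\phi$, and the reduction to bounded $\Omega$ --- are routine.
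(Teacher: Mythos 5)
Your argument pivots on the identification $D_\epsilon u=\rho_\epsilon*Du=(\grad\rho_\epsilon)*u_0$, with $u_0$ the zero extension, and this is where it goes wrong. In the statement (and in the paper's proof, which applies Lemma \ref{lemma:meas-moll-conv} to $\mu^i=Du^i$), $Du$ is the measure in $\Meas(\Omega;\R^n)$ extended by zero \emph{as a measure}; by contrast $(\grad\rho_\epsilon)*u_0=\rho_\epsilon*Du_0$ with $Du_0=Du-\nu_{\BD\Omega}u^-\H^{n-1}\restrict\BD\Omega$, so your operator differs from the paper's by the mollified boundary trace. This is not cosmetic: under your reading, claim \eqref{eq:tvphic-epsilon-conv} is false in general. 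Take $\phi(t)=t$ (increasing, subadditive, continuous, $\phi(0)=0$, hence admissible) and $u\equiv1$ on $\Omega=\B(0,1)$; then $\widetilde\TVphic(u)=0$ while $\int_{\R^n}\abs{(\grad\rho_\epsilon)*u_0}\d x\to\H^{n-1}(\BD\B(0,1))>0$. Your own treatment of the upper bound concedes the point: the ``boundary piece'' is dismissed only in the regime $\phi^\infty=0$, and the fallback to the estimate of Theorem \ref{thm:tvphic-lin} leaves a non-vanishing contribution of order $\phi^\infty\abs{D^su_0}(\BD\Omega)$; but the theorem imposes no hypothesis on $\phi^\infty$. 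Read correctly (no trace term), the second claim is in fact \emph{easier} than you make it: for $u\in C^1(\Omega)\cap\BVspace(\Omega)$ one has $Du=\grad u\,\L^n\restrict\Omega$, so $D_\epsilon u=\rho_\epsilon*(\grad u\chi_\Omega)$ is the mollification of an $L^1$ density and only the absolutely continuous estimate \eqref{eq:tvqc-ac-estim} is needed --- there is no singular piece at all. (Your Fatou argument for the matching lower bound is fine, and simpler than the paper's subadditivity route $\widetilde\TVphic(u)-\TVphic[\phi,\epsilon](u)\le\int_\Omega\phi(\abs{\rho_\epsilon*\grad u-\grad u})\d x$.)

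The same misidentification undermines the lower semicontinuity proof. With $Du$ the measure on $\Omega$, $(\rho_\epsilon*Du^i)(x)=\int_\Omega\rho_\epsilon(x-y)\d Du^i(y)$, and for $x$ near $\BD\Omega$ the function $\rho_\epsilon(x-\freevar)$ does not vanish on the boundary; integrating by parts to reach $\norm{u^i_0-u_0}_{L^1(\R^n)}$ reintroduces exactly the trace term you discarded, and boundary traces are not continuous under weak* convergence in $\BVspace(\Omega)$. So Young's inequality on $u_0$ does not yield $D_\epsilon u^i\to D_\epsilon u$. What is needed instead is the other half of weak* convergence, namely $Du^i\weaktostar Du$ in $\Meas(\Omega;\R^n)$ together with $\sup_i\abs{Du^i}(\Omega)<\infty$, fed into an equicontinuity argument: the family $\{\rho_\epsilon*Du^i\}_{i}$ is uniformly bounded and equi-Lipschitz, so Arzel\`a--Ascoli gives uniform convergence and the limit is identified as $\rho_\epsilon*Du$ by testing against mollified test functions. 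This is precisely the paper's Lemma \ref{lemma:meas-moll-conv}; once you have it, your Fatou step is correct and is more elementary than the paper's appeal to Lemma \ref{lemma:tv-compact-k}. A secondary point: the reduction of the upper bound to bounded $\Omega$ via $u|\B(0,R)$ is not justified as written, since truncation creates a new interface and $\TVphic[\phi,\epsilon](u)$ is not obtained as a monotone limit of $\TVphic[\phi,\epsilon](u|\B(0,R))$; for the absolutely continuous estimate one can instead work directly with $g_0=\abs{\grad u}\chi_\Omega\in L^1(\R^n)$.
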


We relegate the proof of this theorem to Appendix \ref{app:thm:compact}.
It should be noted that any $u \in L^1(\Omega)$ satisfies
$\TVphic[\phi,\epsilon](u) < \infty$. In particular
\[
    \sup_i \frac{1}{2}\norm{z-u^i}_{L^2(\Omega)}^2 + \alpha \TVphic[\phi,\epsilon](u^i) < \infty
\]
does not guarantee weak* convergence of a subsequence.
For that, an additional $\TV(u^i)$ term (with small factor) 
is required in a $\TVphic[\phi,\epsilon]$ based variational model in image processing. 
\end{remark}

\begin{remark}[The space $\SBVspace(\Omega)$ and $\eta$]
If we apply the $\eta$ functional of \cite{tuomov-bd,tuomov-ap1}
to a bounded sequence of functions $g^i \in L^1(\Omega; \R^m)$, 
then we get strict convergence in this space. It remains to find whether we get
convergence. Then we could regularise $\grad u$ this way, and,
working in the space $\SBVspace(\Omega)$, penalise the jump part 
separately. It turns out that this is possible if we state the
modification $\bar\eta$ of $\eta$ in $L^p(\R^n; \R^m)$ for $p \in (1, \infty)$.
Then strict convergence is equivalent to strong convergence.

With $\epsilon_\ell \downto 0$, $\eta_0 > 0$, and $p \in (1, \infty)$, we define
\begin{equation}
    \label{eq:bareta}
    \bar \eta(g) \defeq \eta_0 \sum_{\ell=1}^\infty \bar \eta_\ell(g),
    \quad 
    \bar \eta_\ell(g) 
    \defeq 
    %\int_{\R^n} \norm{g * \rho_{\epsilon_\ell} - g}_{L^1(\R^n; \R^m)},
    \norm{g}_{L^p(\R^n; \R^m)} - \norm{g * \rho_{\epsilon_\ell}}_{L^p(\R^n; \R^m)},
    \quad
    (g \in L^p(\Omega; \R^m)).
\end{equation}
Then we have the following result, whose proof is relegated to
Appendix \ref{app:thm:sbveta}.

\begin{theorem}
    \label{thm:sbveta}
    Let $\Omega \subset \R^m$ be open and bounded.
    Suppose $\psi \in \Wd$, and $\phi: \nonnegR \to \nonnegR$ is lower 
    semicontinuous and increasing with $\phi^\infty=\infty$ and
    \begin{equation}
        \label{eq:sbveta-p-bound}
        \norm{g}_{L^p(\Omega; \R^m)}
        \le C\left(1+ \int \phi(|g(x)|) \d x\right),
        \quad (g \in L^p(\Omega; \R^m)),
    \end{equation}
    for some $C>0$, where $p \in (1, \infty)$ is as in \eqref{eq:bareta}. Let
    \[
        F(u)
        \defeq
        \int_{\R^n} \phi(|\grad u(x)|) \d x +
        \bar \eta(\grad u) + \int_{J_u} \psi(\theta_u(x)) \d \H^{m-1}(x).
    \]
    Then $F$ is lower semicontinuous with respect to weak* convergence
    in $\BVspace(\Omega)$.
    In fact, any sequence $\{u^i\}_{i=1}^\infty$ with $\sup_i F(u^i)<\infty$
    admits a subsequence convergent weakly* and in the sense
    \eqref{eq:sbv-conv-1}--\eqref{eq:sbv-conv-3}.
\end{theorem}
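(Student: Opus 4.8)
The plan is to prove Theorem~\ref{thm:sbveta} in two stages: first establishing the compactness/convergence assertion (the existence of a subsequence convergent weakly* and in the sense \eqref{eq:sbv-conv-1}--\eqref{eq:sbv-conv-3}), and then deducing lower semicontinuity. For the compactness part, suppose $\sup_i F(u^i) < \infty$. Each of the three terms in $F$ is nonnegative: the first because $\phi \ge 0$; the third because $\psi \ge 0$; and $\bar\eta(\grad u) \ge 0$ because convolution with a mollifier is norm non-increasing on $L^p$ (Young's inequality, using $\norm{\rho_{\epsilon_\ell}}_{L^1}=1$). Hence each term is individually bounded. From the bound on $\int \phi(|\grad u^i(x)|) \d x$ together with the coercivity hypothesis \eqref{eq:sbveta-p-bound}, we get $\sup_i \norm{\grad u^i}_{L^p(\Omega;\R^m)} < \infty$, so in particular $\sup_i \int_\Omega |\grad u^i(x)| \d x < \infty$ since $\Omega$ is bounded. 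From the bound on $\int_{J_{u^i}} \psi(\theta_{u^i}(x)) \d \H^{m-1}(x)$ together with $\psi_0 = \infty$ (built into $\psi \in \Wd$) we get a bound on $\H^{m-1}(J_{u^i})$, hence on $\abs{D^j u^i}(\Omega)$ after also bounding the jump heights; more carefully, subadditivity and $\psi_0=\infty$ give a lower bound $\psi(t) \ge c_0 t$ for a suitable $c_0 > 0$ on any bounded range, and combined with a uniform $L^\infty$ bound (obtained by truncation, which does not increase any term of $F$) this controls $\abs{D^j u^i}(\Omega)$. Thus $\sup_i \abs{Du^i}(\Omega) < \infty$, and adding an $L^1$ bound — which comes from the coercivity built into the problem or can be assumed as in the surrounding results — we may invoke the SBV compactness theorem, Theorem~\ref{thm:sbv-compactness}, with the two energies $\phi$ (having $\phi^\infty = \infty$ by hypothesis) and $\psi$ (having $\psi_0 = \infty$). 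This yields a subsequence and a limit $u \in \SBVspace(\Omega)$ with \eqref{eq:sbv-conv-1}--\eqref{eq:sbv-conv-3}.

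For the lower semicontinuity statement, suppose $u^i \weaktostar u$ weakly* in $\BVspace(\Omega)$; we must show $F(u) \le \liminf_i F(u^i)$. Passing to a subsequence realising the liminf, and discarding the case where the liminf is $+\infty$, we are in the situation above, so by SBV compactness (applied along a further subsequence) we have $\grad u^i \weakto \grad u$ weakly in $L^1(\Omega;\R^m)$ and $D^j u^i \weaktostar D^j u$. The term $\int_{J_u}\psi(\theta_u)\d\H^{m-1}$ is handled by \eqref{eq:sbv-conv-4}, since $\psi\in\Wd$ is subadditive with $\psi(0)=0$. The term $\int\phi(|\grad u|)\d x$ is lower semicontinuous under weak $L^1$ convergence of the gradients provided $\phi$ is convex; since here we only assume $\phi$ increasing and lower semicontinuous, the appropriate tool is that the functional $g \mapsto \int\phi(|g|)\d x$ is lower semicontinuous with respect to the $\bar\eta$-induced convergence — this is exactly the role of the $\bar\eta$ term. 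Concretely, the key mechanism (the analogue of Theorem~\ref{thm:eta} / the $L^p$ version in Appendix~\ref{app:thm:sbveta}) is: if $\sup_i \bar\eta(g^i) < \infty$ and $g^i \weakto g$ weakly in $L^1$ while staying bounded in $L^p$, then in fact $g^i \to g$ strongly in $L^p(\Omega;\R^m)$, and hence (after a further subsequence) pointwise a.e., so that by lower semicontinuity of $\phi$ and Fatou's lemma $\int\phi(|g|)\d x \le \liminf_i \int\phi(|g^i|)\d x$. Applying this with $g^i = \grad u^i$, and using weak* lower semicontinuity of $\bar\eta$ itself (each $\bar\eta_\ell$ is a difference of a norm and a weakly-lower-semicontinuous-composed-with-continuous-operator term, so lower semicontinuity of $\bar\eta_\ell$ needs the mollification term to be \emph{continuous} under the relevant convergence — which it is, since $g \mapsto g*\rho_{\epsilon_\ell}$ maps weak $L^1$ convergence to strong local convergence), we assemble $F(u) \le \liminf_i F(u^i)$, with the series $\bar\eta = \eta_0\sum_\ell \bar\eta_\ell$ handled by Fatou on the counting measure over $\ell$.

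The heart of the argument — and the main obstacle — is establishing that bounded $\bar\eta$ upgrades weak $L^1$ convergence of $\grad u^i$ to strong $L^p$ convergence. This is the $L^p$-analogue of the strict-convergence phenomenon of \cite{tuomov-bd,tuomov-ap1}, where $\norm{g * \rho_\epsilon}_{L^p} \to \norm{g}_{L^p}$ along the dyadic scales, combined with weak convergence and the uniform convexity of $L^p$ for $p \in (1,\infty)$ (which is precisely why $p=1$ is excluded), forces strong convergence. The delicate points are: (i) controlling the behaviour near $\partial\Omega$, where convolution with $\rho_{\epsilon_\ell}$ pulls in mass from outside — this is why we extend $\grad u$ by zero to $\R^n$ and work on $L^p(\R^n;\R^m)$ in \eqref{eq:bareta}; (ii) making the telescoping/semigroup structure $\rho_{\epsilon_\ell}$ along $\epsilon_\ell \downto 0$ interact correctly with the weak limit so that $\sup_i \bar\eta_\ell(\grad u^i)<\infty$ for all $\ell$ yields $\norm{\grad u}_{L^p} = \lim_i \norm{\grad u^i}_{L^p}$; and (iii) verifying that the lower semicontinuity of each $\bar\eta_\ell$ is genuine, i.e.\ that $\liminf_i \bar\eta_\ell(\grad u^i) \ge \bar\eta_\ell(\grad u)$, which requires the convolved gradients to converge strongly (so their $L^p$-norms converge) while the unconvolved ones only converge weakly (so their $L^p$-norms drop in the limit) — the inequality then falls out in the right direction. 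All of this is the content we defer to Appendix~\ref{app:thm:sbveta}, where the $L^p$ version of the machinery from \cite{tuomov-bd,tuomov-ap1} is developed; the theorem statement itself then follows by the two-stage assembly above.
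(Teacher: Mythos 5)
Your proposal is correct and follows essentially the same route as the paper: bound each (nonnegative) term of $F$ separately, invoke the SBV compactness theorem for the weak* convergence and the lower semicontinuity of the jump term via \eqref{eq:sbv-conv-4}, and use the key fact that a uniform bound on $\bar\eta(\grad u^i)$ upgrades the weak $L^p$ convergence of the gradients (available by \eqref{eq:sbveta-p-bound}) to strong $L^p$ convergence, after which every term of $F$ is lower semicontinuous. The ``heart of the argument'' you defer is exactly Proposition~\ref{prop:remedy2-base} in Appendix~\ref{app:thm:sbveta}, and your description of its mechanism (convergence of the $L^p$ norms along the mollification scales combined with weak convergence and the uniform convexity of $L^p$ for $p\in(1,\infty)$) matches the paper's proof.
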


\end{remark}

\begin{comment}
We can also combine SBV with compact operators instead of $\eta$.
It should however be noted that the following approach does not
guarantee any boundedness in $\BVspace(\Omega)$, and hence no
convergence. An additional $\TV$ regularisation term is required
for that.
\TODO{Verify carefully that weak* in BV suffices for lsc.~instead of
SBV convergence}

\begin{proposition}
    Suppose $\phi: \nonnegR \to \nonnegR$ is lower semicontinuous 
    and increasing with $\phi^\infty=\infty$, and $\psi \in \Wd$.
    Let
    \[
        \widetilde \grad_\epsilon u \defeq \rho_\epsilon * \grad u,
    \]
    and
    \[
        F(u)
        \defeq
        \int_{\R^n} \phi(\norm{\widetilde \grad_\epsilon u(x)}) \d x +
        \int_{J_u} \psi(\theta_u(x)) \d \H^{m-1}(x).
    \]
    Then $F$ is lower semicontinuous with respect to weak* convergence
    in $\BVspace(\Omega)$.
\end{proposition}
\end{comment}

\section{Image statistics and the jump part}
\label{sec:stat}

%It does however point us towards a question about image statistics: 
Our studies in the proceding sections have pointed us to the following question:
Are the statistics of \cite{huang1999statistics} valid when we split
the image into smooth and jump parts? What are the statistics for 
jump heights, and does splitting the gradient into these two parts
alter the distribution for the absolutely continuous part? 
When calculating statistics from discrete images, we do not have
the excuse that the jumps would be negligible, i.e.~$\L^m(J_u)=0$! 

In order to gain some insight, here we did a few experiments with real
photographs, displayed in Figures \ref{fig:dock}--\ref{fig:summer}.
These three photographs represent images with different types of statistics.
The pier photo of Figure \ref{fig:dock} is very simple, with large smooth areas
and some fine structures. The parrot test image in Figure \ref{fig:parrot}
has a good balance of features. The summer lake scene in Figure \ref{fig:summer}
is somewhat more complex, with plenty of fine features.

We split the pixels of each image into edge and smooth parts by a simple 
threshold on the norm $\norm{\grad u(k)}$ of the discrete gradient at each
pixel $k$. Then we find optimal $\alpha$ and $q \in (0, 2)$ for the distribution
\[
    P_t(t) \defeq C_t \exp(-\alpha \phi(t)),
\]    
to match the experimental distribution. This in turn gives rise to the prior
\[
    P_u(u)=C_u \exp\left(-\alpha \int_\Omega \phi(|\grad u(x)|) \d x\right).
\]
Both $C_t, C_u > 0$ are normalising factors.
In practise we do the fitting of $P_t$ to the experimental distribution
by a simple least squares fit on the logarithms of
the distributions. We will comment on the suitability of this approach
later on in this section. In the least squares fit we keep $C$ as a free (unnormalised) 
parameter, and recalculate it after the fit. Observe that the normalisation constant
does not affect the denoising problem (here in the finite-dimensional setting)
\[
    \max_f P_{u|f}(u|f)P_u(u) \propto \max_u \exp\left(-\frac{\sigma^2}{2}\norm{z-u}_{2}^2 - \alpha \widetilde\TVphic(u)\right).
\]
Here we have the Gaussian noise distribution
\[
    P_{f|u}(f|u)=C' \exp\left(-\frac{\sigma^2}{2}\norm{z-u}_{2}^2\right),
\]
for $\sigma$ the noise level.
This gives the statistical interpretation of the denoising model, that of a maximum a posteriori (MAP) estimate.

Finally, in the matter of statistics, we note that the $\TVphi$ prior 
only attempts to correctly model gradient statistics; the modelling of 
histogram statistics with Wasserstein distances was recently studied 
in \cite{rabin2011wasserstein,swoboda2013histogram} together with 
the conventional $\TV$ gradient prior.
It is also worth remarking that our approach of improving the prior based on the statistics of the ground-truth is different from recent approaches that optimise the prior based on the denoising result \cite{delosreyes2014learning,haber2003learning,haber2008numerical,biegler2011large,bui2008model}. These approaches can provide improved results in practise, but no longer have the simple MAP interpretation. It is definitely possible to optimise the parameters of the $\TV^\phi$ model in this manner, but outside the scope of the present already long manuscript.

\newlength{\w}
\newlength{\ws}

\def\minione#1{#1}
\def\minitwo#1{#1}
\setlength{\w}{0.49\textwidth}
\setlength{\ws}{0.32\textwidth}

\newcommand{
    \begin{figure}[!ht]
        \input{img/-data.tex}
        \centering
        \minione{
        \begin{subfigure}[t]{\w}
            \centering
            \includegraphics[width=\w,keepaspectratio]{img/-orig}
            \caption{}
            \label{fig::orig}
        \end{subfigure}
        \begin{subfigure}[t]{\w}
            \centering
            \includegraphics[width=\w,keepaspectratio]{img/-edge}
            \caption{Detected edge pixels (red)}
            \label{fig::edge}
        \end{subfigure}
        }
        \minitwo{
        \begin{subfigure}[t]{\ws}
            \centering
            \includegraphics[width=\ws,keepaspectratio]{img/-fit-full}
            \caption{Log-histogram and $t^q$ model}% (q=\IMGfullq)}
            \label{fig::fit}
        \end{subfigure}
        \begin{subfigure}[t]{\ws}
            \centering
            \includegraphics[width=\ws,keepaspectratio]{img/-fit-part}
            \caption{Separate $t^q$ model for edge part}
            \label{fig::fitsplit}
        \end{subfigure}
        \begin{subfigure}[t]{\ws}
            \centering
            \includegraphics[width=\ws,keepaspectratio]{img/-fit-opt}
            \caption{Linearised models}
            \label{fig::fitlin}
        \end{subfigure}
        }
        \caption{: discrete gradient histogram and least squares models.
            The image intensity in (\subref{fig::orig}) is in the range $[0,255]$, 
            and we have chosen pixels $k$ with $|\grad u(k)| \ge \IMGthres$
            as edges (\subref{fig::edge}).
            The log-histogram of $|\grad u(k)|$ with
            optimal fit of $t \mapsto t^q$ is displayed in (\subref{fig::fit}).
            This is done separately for the edge pixels
            in (\subref{fig::fitsplit}).
            The linearised model is fitted in (\subref{fig::fitlin})
            for the cut-off point $M=\IMGthres$ (manual edge detection),
            $M=\IMGlinoptthres$ (optimal least squares fit).
            We moreover show the empirically best linearised model.
            }
        \label{fig:}
    \end{figure}
}[2]{
    \begin{figure}[!ht]
        \input{img/#1-data.tex}
        \centering
        \minione{
        \begin{subfigure}[t]{\w}
            \centering
            \includegraphics[width=\w,keepaspectratio]{img/#1-orig}
            \caption{#2}
            \label{fig:#1:orig}
        \end{subfigure}
        \begin{subfigure}[t]{\w}
            \centering
            \includegraphics[width=\w,keepaspectratio]{img/#1-edge}
            \caption{Detected edge pixels (red)}
            \label{fig:#1:edge}
        \end{subfigure}
        }
        \minitwo{
        \begin{subfigure}[t]{\ws}
            \centering
            \includegraphics[width=\ws,keepaspectratio]{img/#1-fit-full}
            \caption{Log-histogram and $t^q$ model}% (q=\IMGfullq)}
            \label{fig:#1:fit}
        \end{subfigure}
        \begin{subfigure}[t]{\ws}
            \centering
            \includegraphics[width=\ws,keepaspectratio]{img/#1-fit-part}
            \caption{Separate $t^q$ model for edge part}
            \label{fig:#1:fitsplit}
        \end{subfigure}
        \begin{subfigure}[t]{\ws}
            \centering
            \includegraphics[width=\ws,keepaspectratio]{img/#1-fit-opt}
            \caption{Linearised models}
            \label{fig:#1:fitlin}
        \end{subfigure}
        }
        \caption{#2: discrete gradient histogram and least squares models.
            The image intensity in (\subref{fig:#1:orig}) is in the range $[0,255]$, 
            and we have chosen pixels $k$ with $|\grad u(k)| \ge \IMGthres$
            as edges (\subref{fig:#1:edge}).
            The log-histogram of $|\grad u(k)|$ with
            optimal fit of $t \mapsto t^q$ is displayed in (\subref{fig:#1:fit}).
            This is done separately for the edge pixels
            in (\subref{fig:#1:fitsplit}).
            The linearised model is fitted in (\subref{fig:#1:fitlin})
            for the cut-off point $M=\IMGthres$ (manual edge detection),
            $M=\IMGlinoptthres$ (optimal least squares fit).
            We moreover show the empirically best linearised model.
            }
        \label{fig:#1}
    \end{figure}
}

    \begin{figure}[!ht]
        \input{img/dock-data.tex}
        \centering
        \minione{
        \begin{subfigure}[t]{\w}
            \centering
            \includegraphics[width=\w,keepaspectratio]{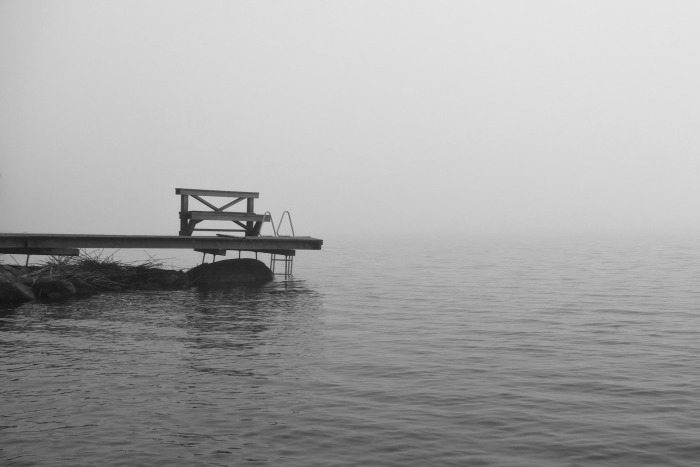}
            \caption{Pier photo}
            \label{fig:dock:orig}
        \end{subfigure}
        \begin{subfigure}[t]{\w}
            \centering
            \includegraphics[width=\w,keepaspectratio]{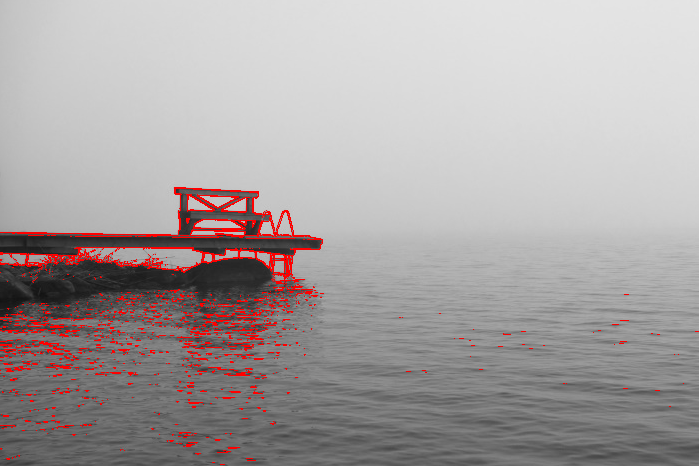}
            \caption{Detected edge pixels (red)}
            \label{fig:dock:edge}
        \end{subfigure}
        }
        \minitwo{
        \begin{subfigure}[t]{\ws}
            \centering
            \includegraphics[width=\ws,keepaspectratio]{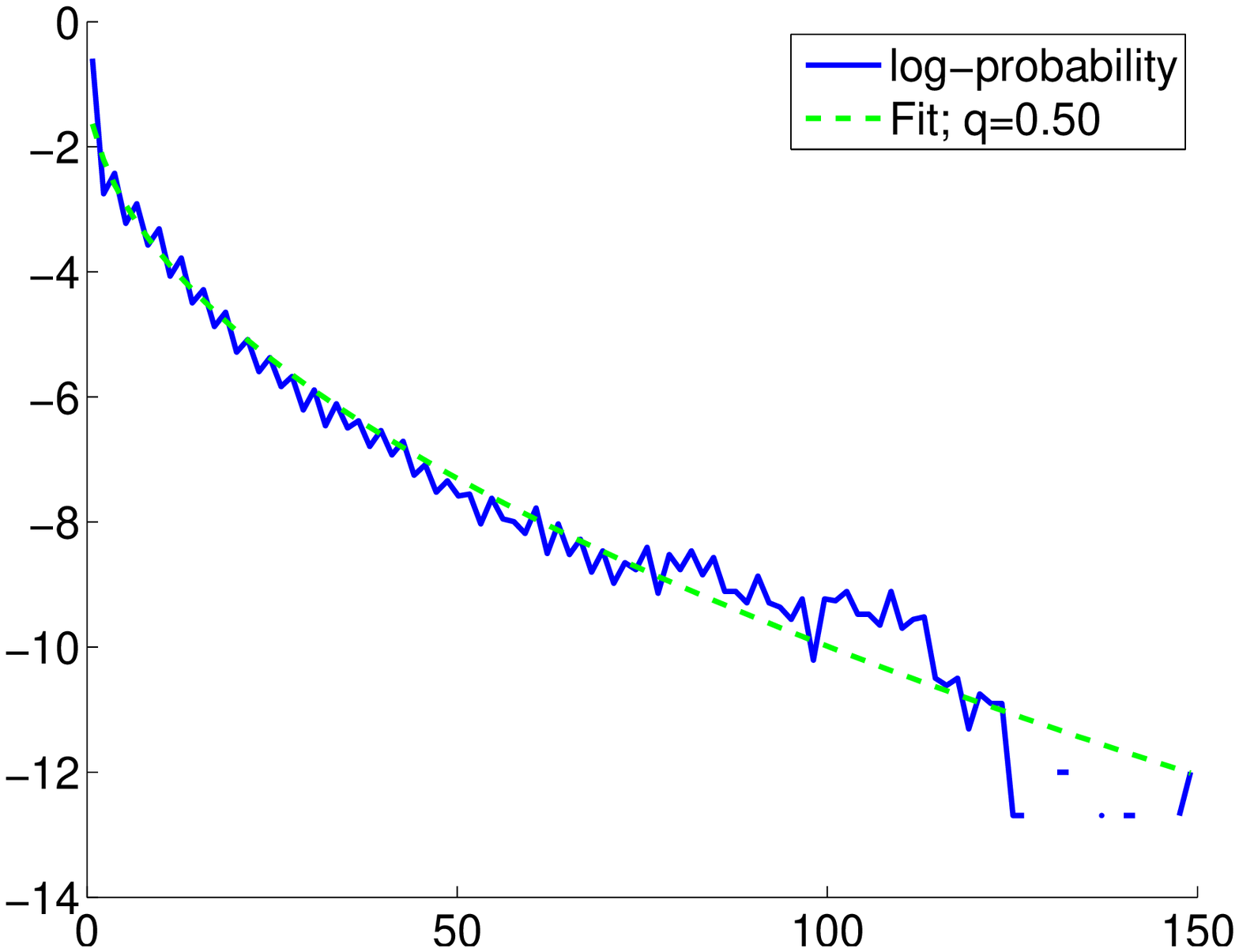}
            \caption{Log-histogram and $t^q$ model}% (q=\IMGfullq)}
            \label{fig:dock:fit}
        \end{subfigure}
        \begin{subfigure}[t]{\ws}
            \centering
            \includegraphics[width=\ws,keepaspectratio]{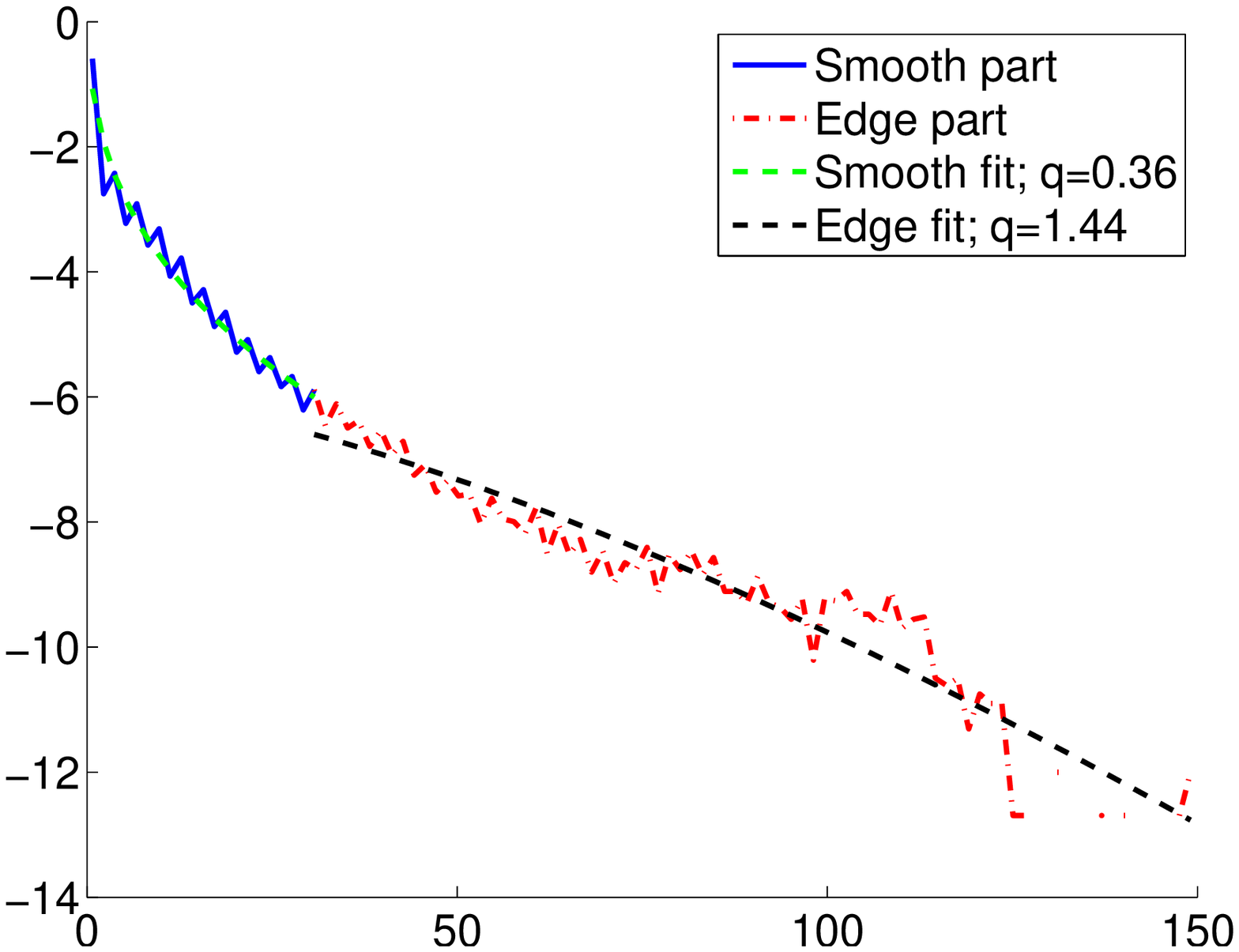}
            \caption{Separate $t^q$ model for edge part}
            \label{fig:dock:fitsplit}
        \end{subfigure}
        \begin{subfigure}[t]{\ws}
            \centering
            \includegraphics[width=\ws,keepaspectratio]{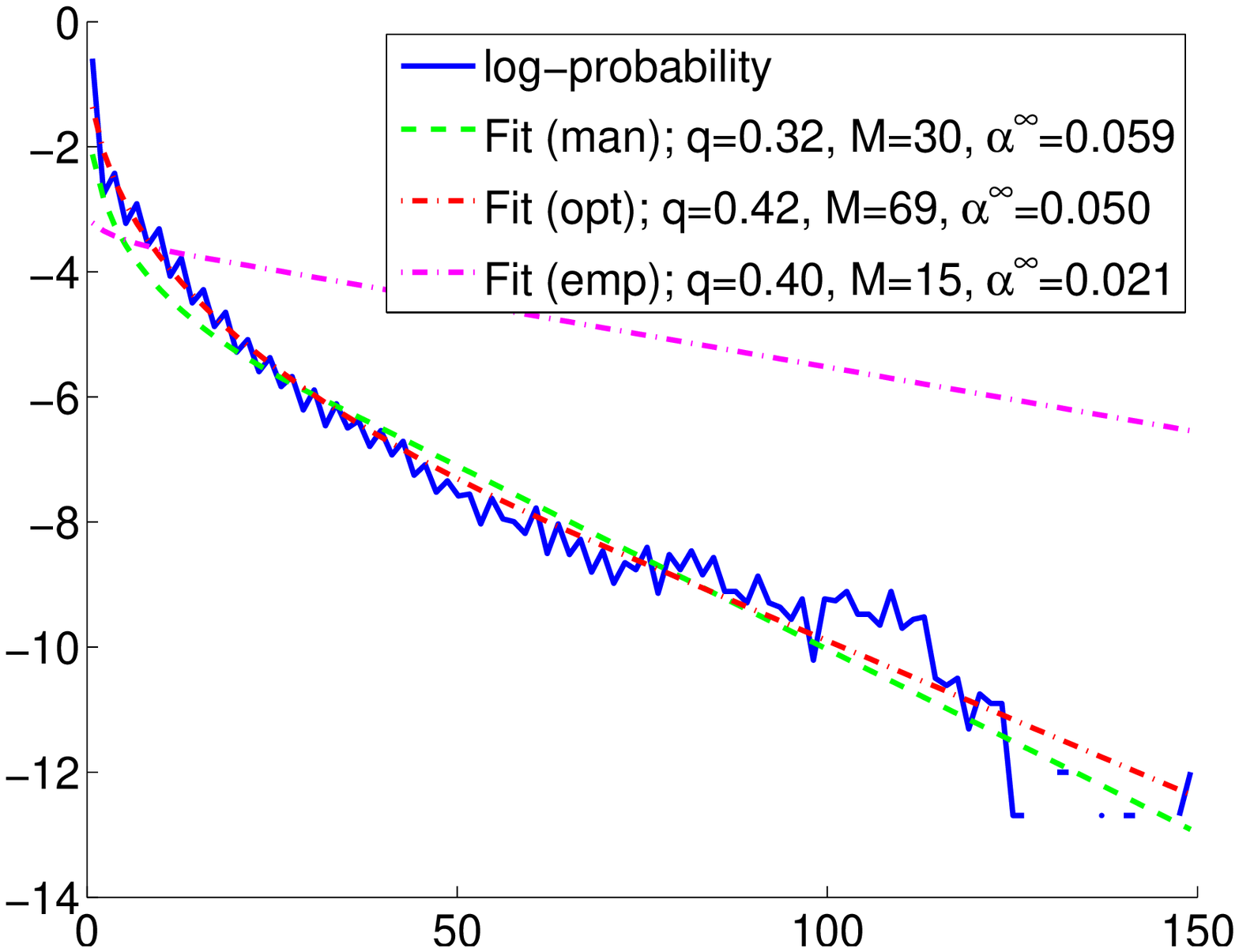}
            \caption{Linearised models}
            \label{fig:dock:fitlin}
        \end{subfigure}
        }
        \caption{Pier photo: discrete gradient histogram and least squares models.
            The image intensity in (\subref{fig:dock:orig}) is in the range $[0,255]$, 
            and we have chosen pixels $k$ with $|\grad u(k)| \ge \IMGthres$
            as edges (\subref{fig:dock:edge}).
            The log-histogram of $|\grad u(k)|$ with
            optimal fit of $t \mapsto t^q$ is displayed in (\subref{fig:dock:fit}).
            This is done separately for the edge pixels
            in (\subref{fig:dock:fitsplit}).
            The linearised model is fitted in (\subref{fig:dock:fitlin})
            for the cut-off point $M=\IMGthres$ (manual edge detection),
            $M=\IMGlinoptthres$ (optimal least squares fit).
            We moreover show the empirically best linearised model.
            }
        \label{fig:dock}
    \end{figure}

    \begin{figure}[!ht]
        \input{img/parrot-data.tex}
        \centering
        \minione{
        \begin{subfigure}[t]{\w}
            \centering
            \includegraphics[width=\w,keepaspectratio]{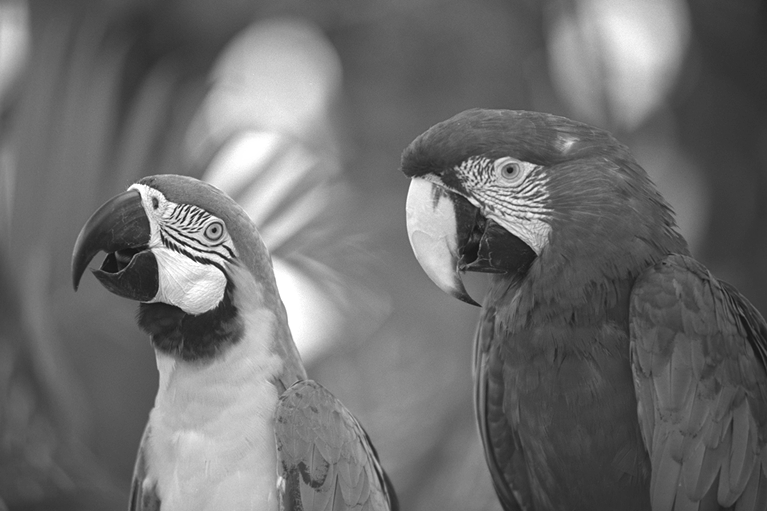}
            \caption{Parrot photo}
            \label{fig:parrot:orig}
        \end{subfigure}
        \begin{subfigure}[t]{\w}
            \centering
            \includegraphics[width=\w,keepaspectratio]{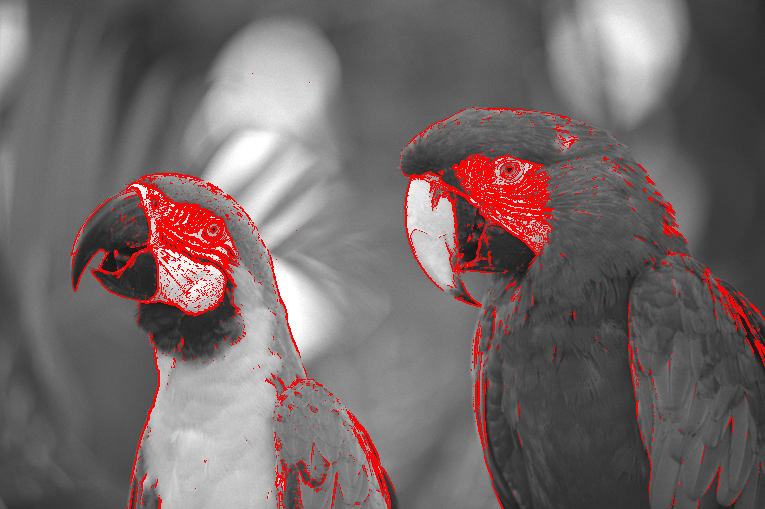}
            \caption{Detected edge pixels (red)}
            \label{fig:parrot:edge}
        \end{subfigure}
        }
        \minitwo{
        \begin{subfigure}[t]{\ws}
            \centering
            \includegraphics[width=\ws,keepaspectratio]{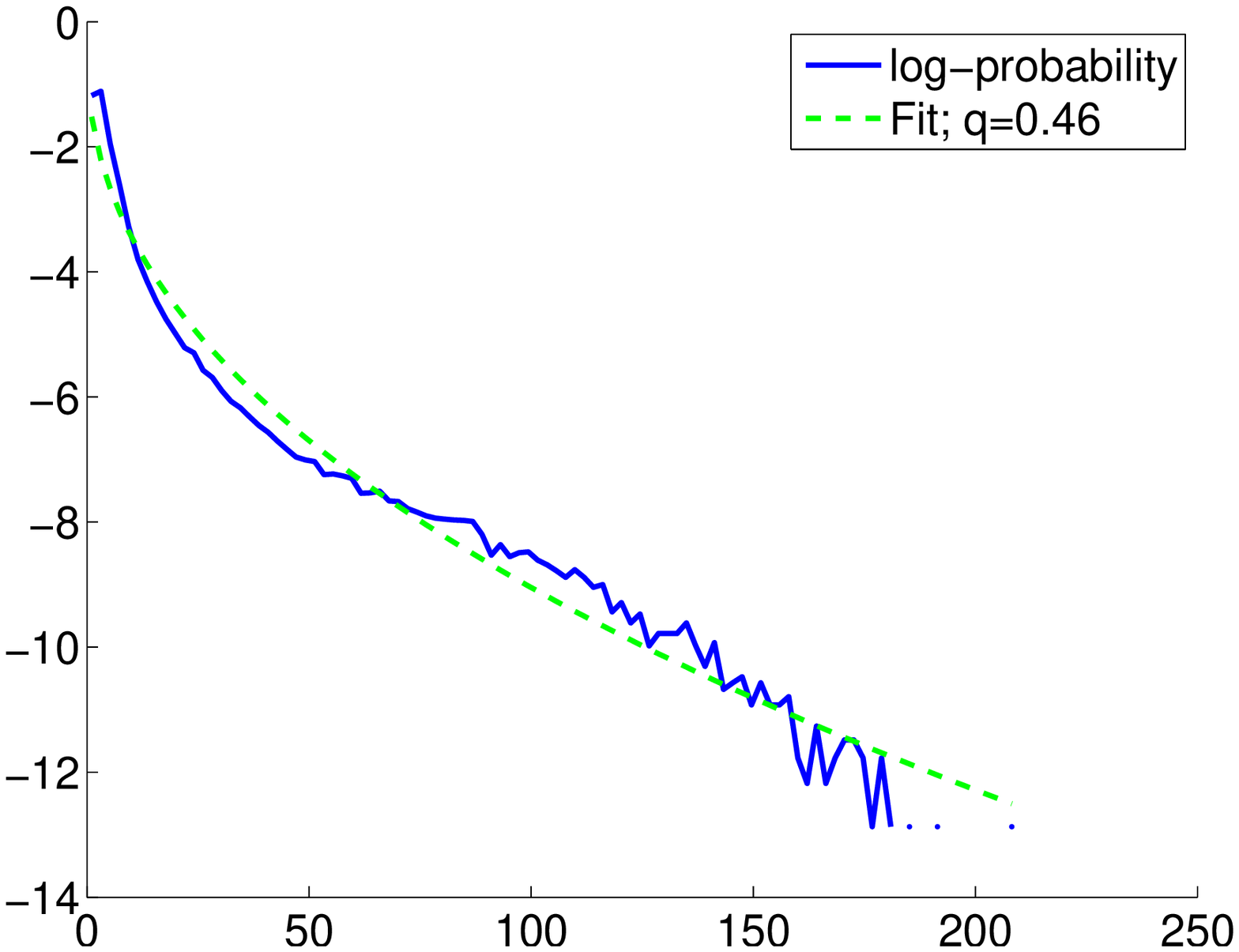}
            \caption{Log-histogram and $t^q$ model}% (q=\IMGfullq)}
            \label{fig:parrot:fit}
        \end{subfigure}
        \begin{subfigure}[t]{\ws}
            \centering
            \includegraphics[width=\ws,keepaspectratio]{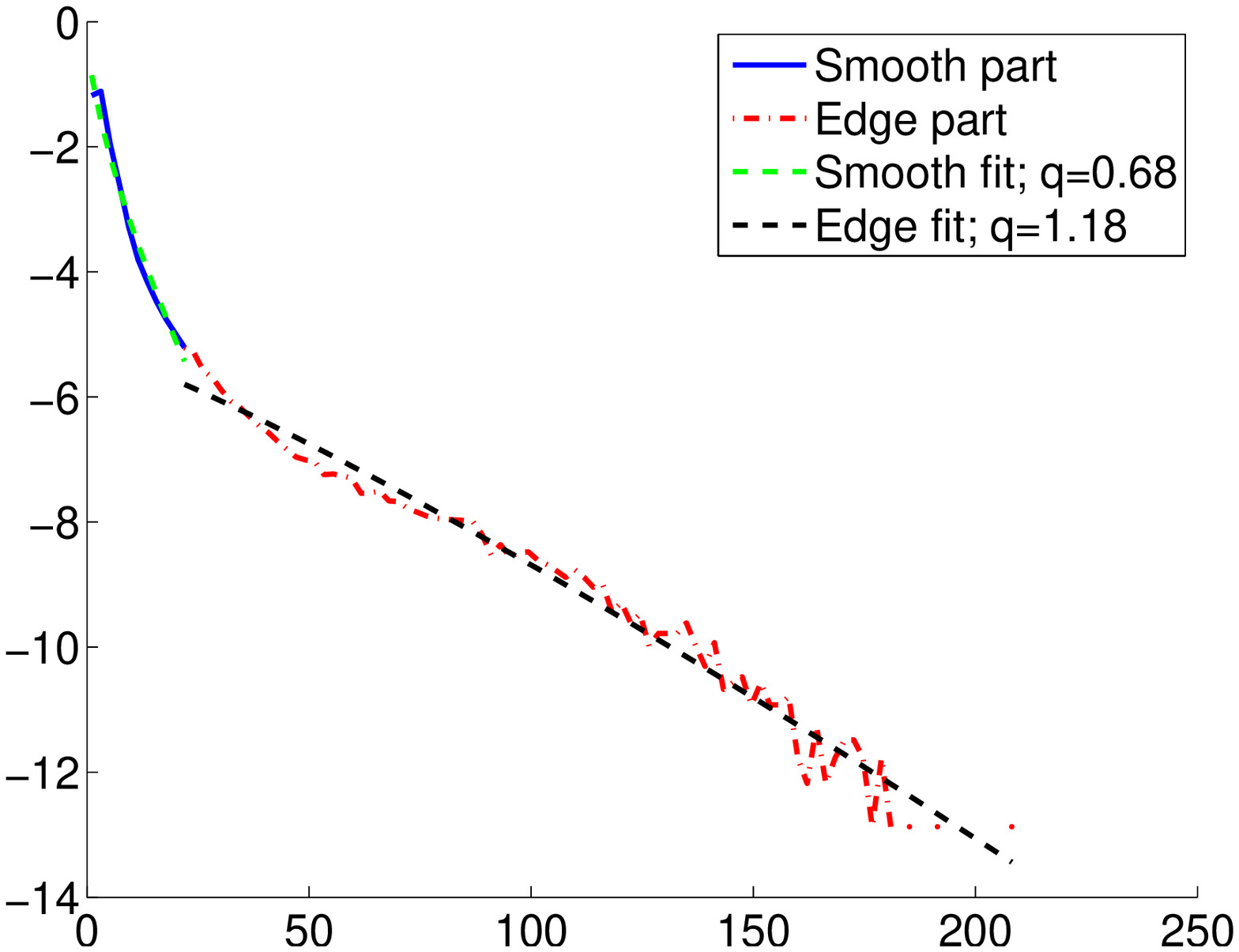}
            \caption{Separate $t^q$ model for edge part}
            \label{fig:parrot:fitsplit}
        \end{subfigure}
        \begin{subfigure}[t]{\ws}
            \centering
            \includegraphics[width=\ws,keepaspectratio]{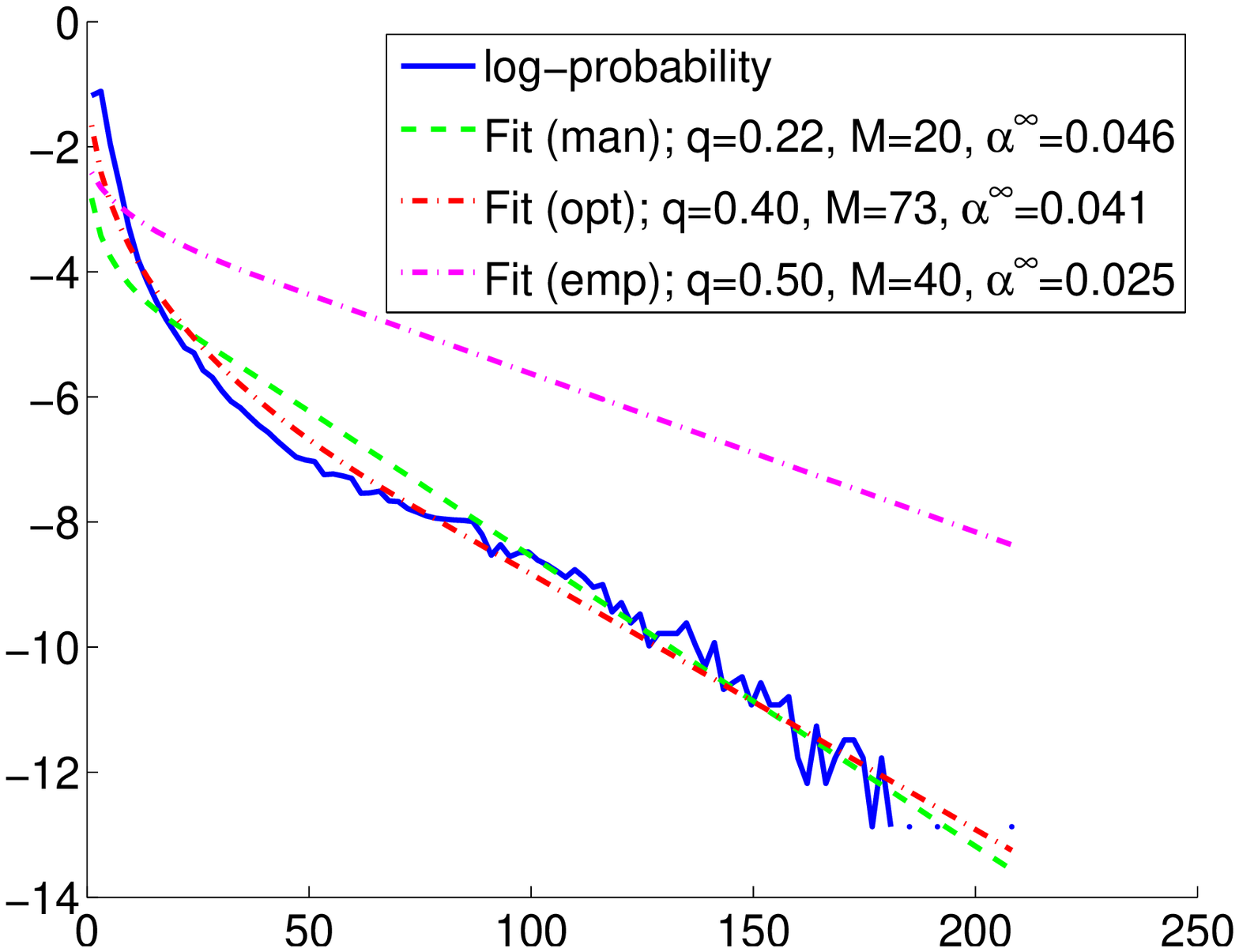}
            \caption{Linearised models}
            \label{fig:parrot:fitlin}
        \end{subfigure}
        }
        \caption{Parrot photo: discrete gradient histogram and least squares models.
            The image intensity in (\subref{fig:parrot:orig}) is in the range $[0,255]$, 
            and we have chosen pixels $k$ with $|\grad u(k)| \ge \IMGthres$
            as edges (\subref{fig:parrot:edge}).
            The log-histogram of $|\grad u(k)|$ with
            optimal fit of $t \mapsto t^q$ is displayed in (\subref{fig:parrot:fit}).
            This is done separately for the edge pixels
            in (\subref{fig:parrot:fitsplit}).
            The linearised model is fitted in (\subref{fig:parrot:fitlin})
            for the cut-off point $M=\IMGthres$ (manual edge detection),
            $M=\IMGlinoptthres$ (optimal least squares fit).
            We moreover show the empirically best linearised model.
            }
        \label{fig:parrot}
    \end{figure}

\setlength{\w}{0.33\textwidth}
\setlength{\ws}{0.41\textwidth}

\def\minione#1{\parbox{\w}{#1}}
\def\minitwo#1{\parbox{\ws}{#1}}

    \begin{figure}[!ht]
        \input{img/summer-data.tex}
        \centering
        \minione{
        \begin{subfigure}[t]{\w}
            \centering
            \includegraphics[width=\w,keepaspectratio]{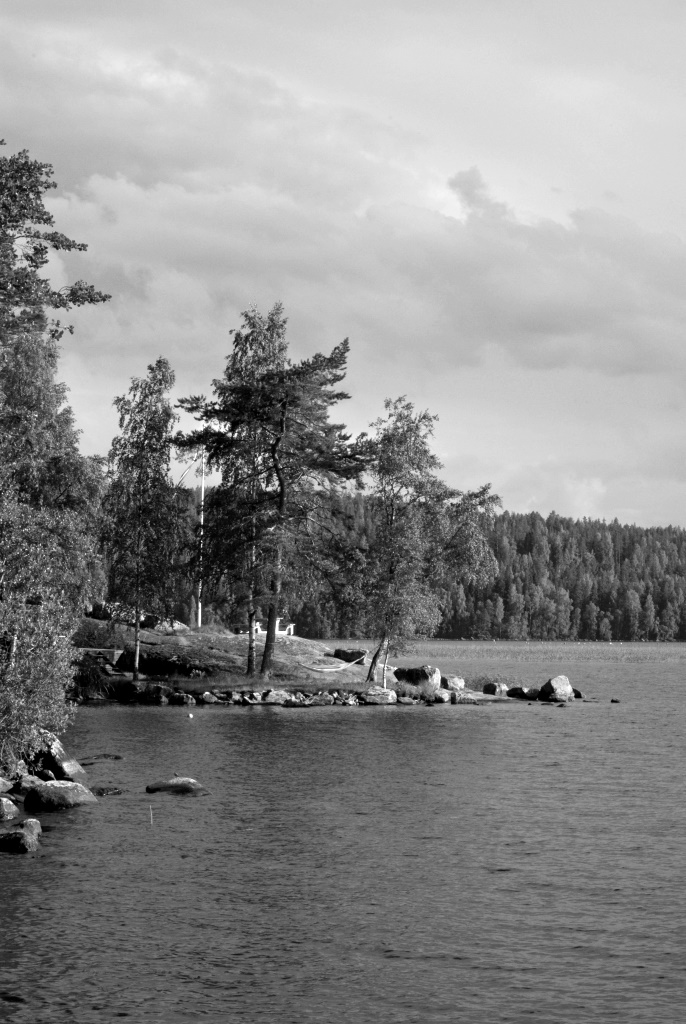}
            \caption{Summer photo}
            \label{fig:summer:orig}
        \end{subfigure}
        \begin{subfigure}[t]{\w}
            \centering
            \includegraphics[width=\w,keepaspectratio]{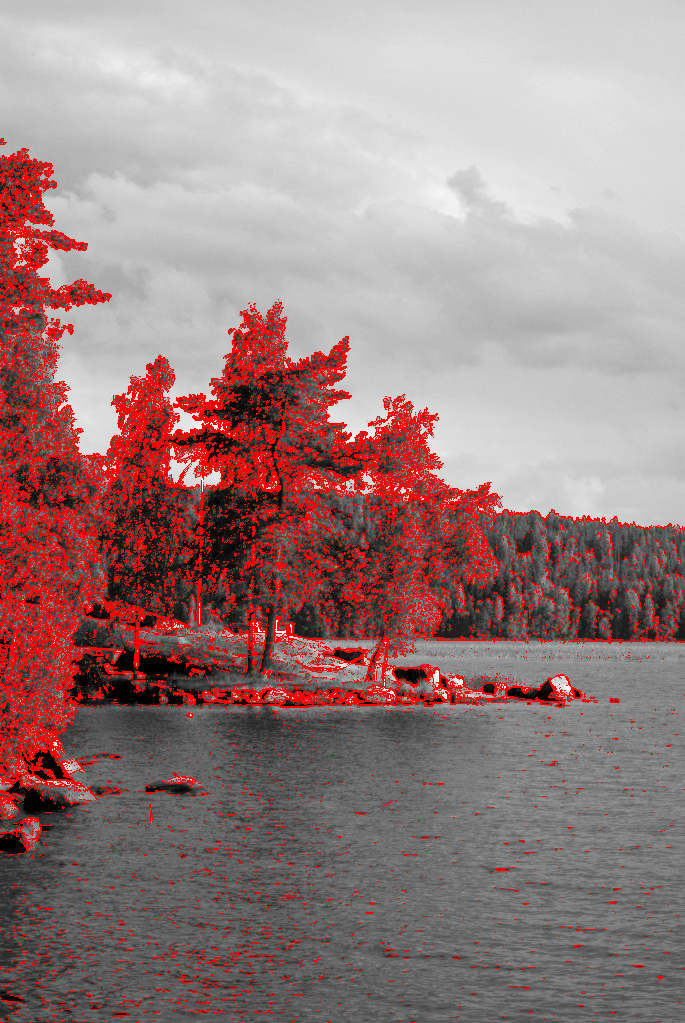}
            \caption{Detected edge pixels (red)}
            \label{fig:summer:edge}
        \end{subfigure}
        }
        \minitwo{
        \begin{subfigure}[t]{\ws}
            \centering
            \includegraphics[width=\ws,keepaspectratio]{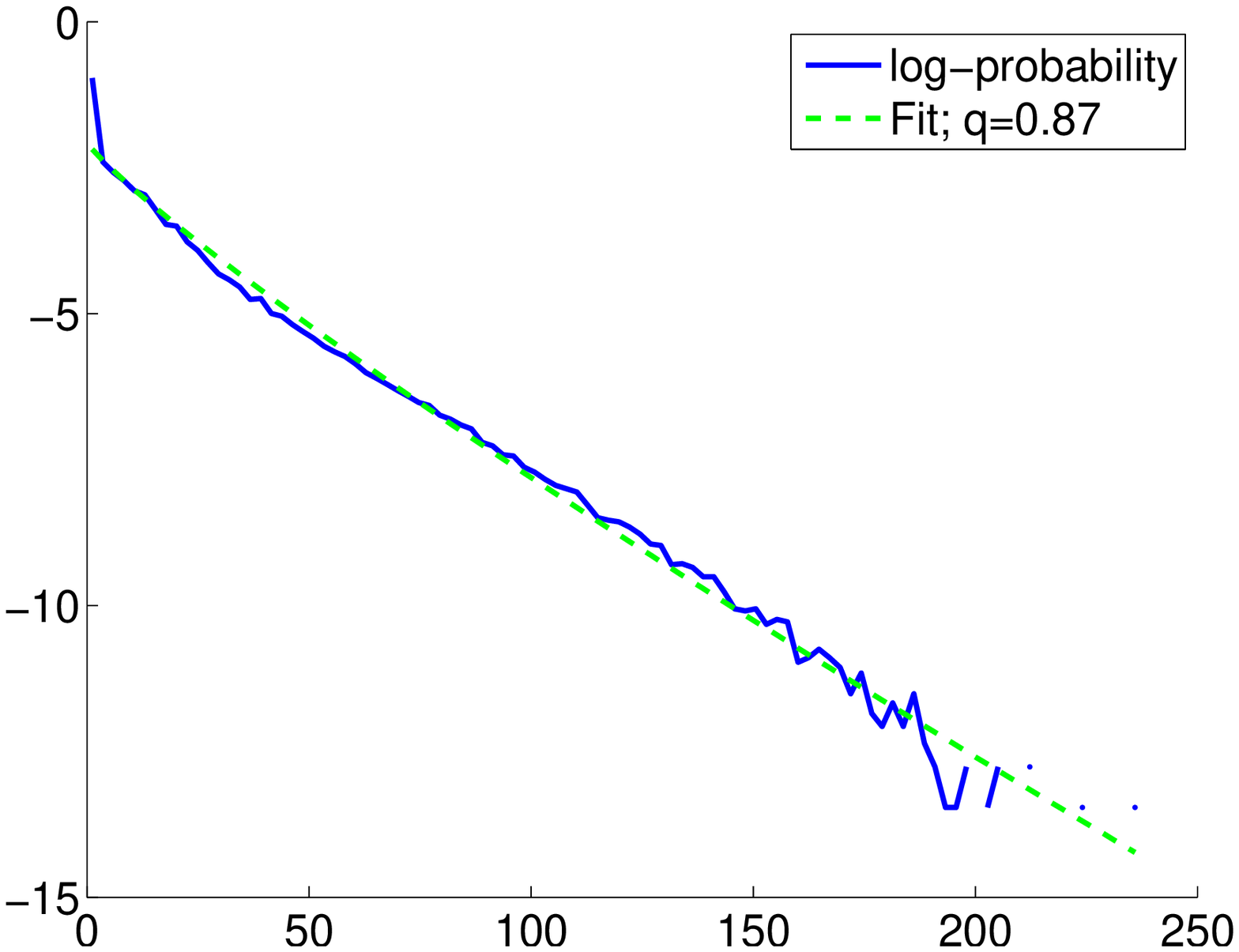}
            \caption{Log-histogram and $t^q$ model}% (q=\IMGfullq)}
            \label{fig:summer:fit}
        \end{subfigure}
        \begin{subfigure}[t]{\ws}
            \centering
            \includegraphics[width=\ws,keepaspectratio]{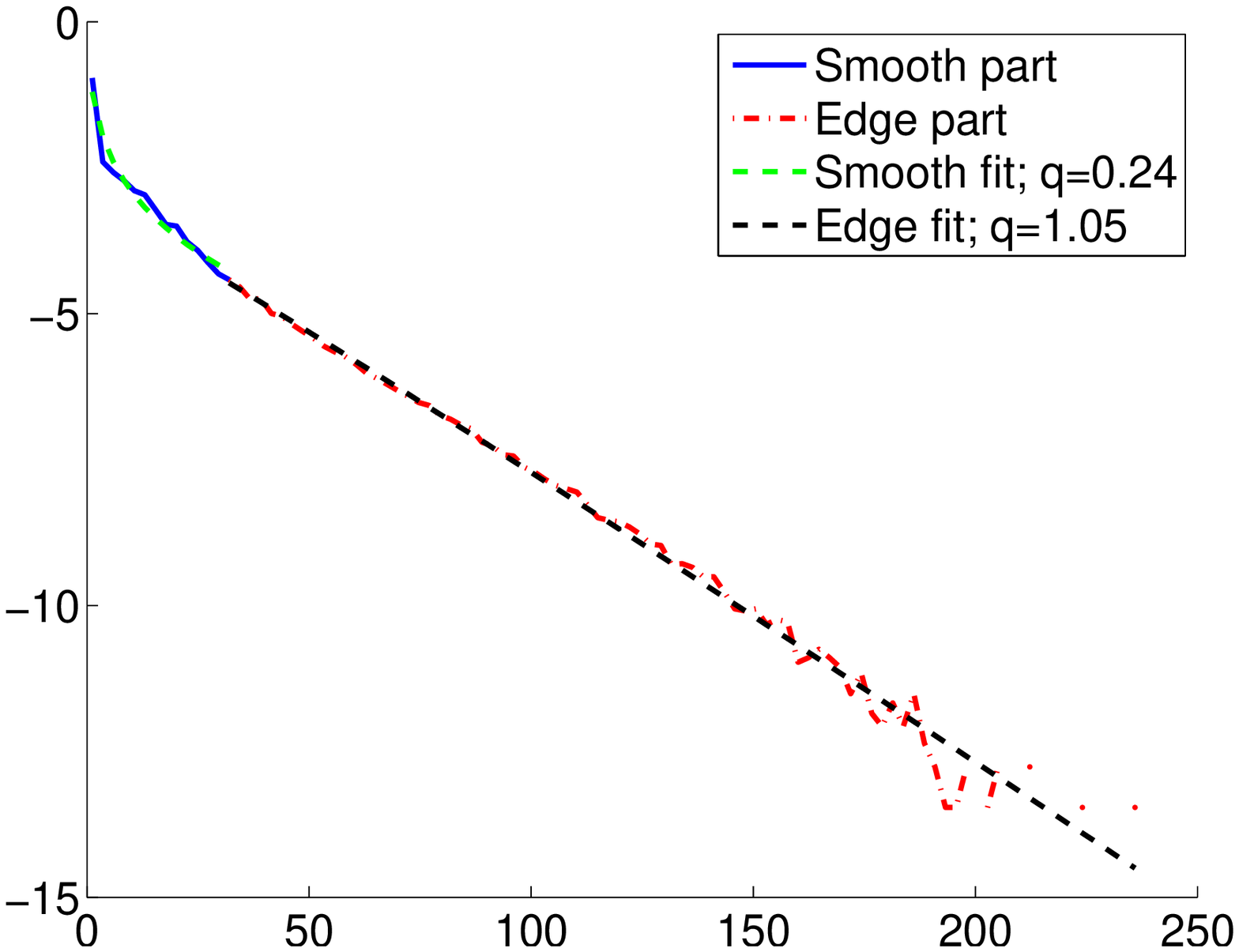}
            \caption{Separate $t^q$ model for edge part}
            \label{fig:summer:fitsplit}
        \end{subfigure}
        \begin{subfigure}[t]{\ws}
            \centering
            \includegraphics[width=\ws,keepaspectratio]{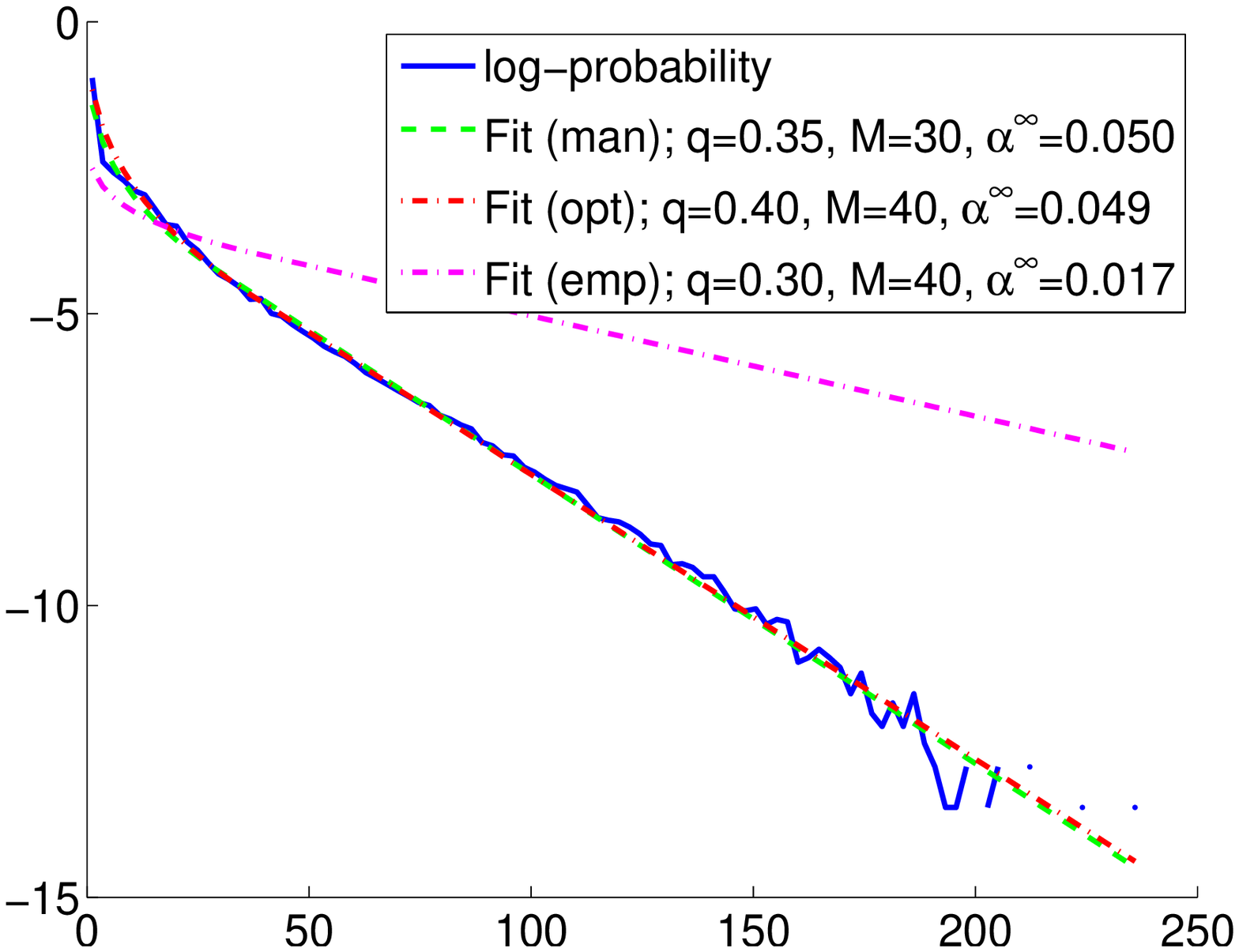}
            \caption{Linearised models}
            \label{fig:summer:fitlin}
        \end{subfigure}
        }
        \caption{Summer photo: discrete gradient histogram and least squares models.
            The image intensity in (\subref{fig:summer:orig}) is in the range $[0,255]$, 
            and we have chosen pixels $k$ with $|\grad u(k)| \ge \IMGthres$
            as edges (\subref{fig:summer:edge}).
            The log-histogram of $|\grad u(k)|$ with
            optimal fit of $t \mapsto t^q$ is displayed in (\subref{fig:summer:fit}).
            This is done separately for the edge pixels
            in (\subref{fig:summer:fitsplit}).
            The linearised model is fitted in (\subref{fig:summer:fitlin})
            for the cut-off point $M=\IMGthres$ (manual edge detection),
            $M=\IMGlinoptthres$ (optimal least squares fit).
            We moreover show the empirically best linearised model.
            }
        \label{fig:summer}
    \end{figure}

Our experiments confirm the findings of
\cite{huang1999statistics} that some $q \in (0, 1)$ is generally
a good fit for the entire distribution, as well as for 
the smooth part. However, optimal $q$ for the edge part varies.
In Figure \ref{fig:dock} actually $q=1.44$ -- larger than one! 
We have to admit that
the number of edge pixels in this image is quite small, so 
statistically the result may be considered unreliable. In Figure
\ref{fig:summer} with a significant proportion of edge pixels,
we still have $q=1.05$. %This kind of distributions are however 
%contradicted by Figure \ref{fig:flower}, where $q=0.3$ is actually
%smaller than for the smooth part. 
These findings also suggest that
\emph{on average} fitting a single $q \in (0, 1)$ to the entire
statistic (not split into edge and smooth parts) may be right,
but there is significant variation between images in the shape
of the distribution for the edge part. The smooth part generally 
looks roughly similar among our test images.

In order to suggest an improved model for image gradient statistics,
in each of Figures \ref{fig:dock}(e)--\ref{fig:summer}(e), we also fit to the 
statistics of the linearised distribution $P_t(t) = C \exp(-\alpha \phi(t))$ for
\begin{equation}
    \label{eq:phi-linearised}
    \phi(t) \defeq \begin{cases}
        t^q & 0 \le t \le M, \\
        (1-q)M^q+q M^{q-1}t, & t > M.
    \end{cases}
\end{equation}
This is again done by a least squares fit on the logarithm of the distribution.
For the `Fit (man)' curve, we fix the cut-off point $M$ to a hand-picked (manual) edge 
threshold and optimise $(q, \alpha)$. We also optimise over all of the 
parameters $(q, \alpha, M)$. This is the `Fit (opt)' curve.
We note that the \term{asymptotic $\alpha$}, which we define as
\[
    \alpha^\infty \defeq \lim_{t \to \infty} \alpha \phi(t)/t
    =q M^{q-1},
\]
is roughly the same for both of the choices, and generally the curves are close
to each other. As $\alpha^\infty$ describes the behaviour of the model on
edges, and for total variation denoising $\alpha^\infty=\alpha$, we
find $\alpha^\infty$ to be a parameter that should indeed stay roughly constant
between models with different $q$ and $M$. It, however, turns out that
$\alpha^\infty$ as obtained by the simple least squares histogram fit
is in practise bad; it gives far too high regularisation, i.e., a too 
narrow distribution. The problem is that
the simple least squares fit on the logarithm over-emphasises the tail of the
distribution, on which we moreover have very little statistics due to the
discrete nature of the data. This yields far too high
$\alpha^\infty$, i.e., the slope of the linear part is too steep in the figures.
Developing a reliable way to obtain the model from the data is outside the scope 
of the present paper, although it is definitely an interesting subject for future studies.
%; we did not obtain good results by a more
%involved optimisation process without the logarithmic transformation either.
%Neither did we obtain good results by a linearisation of this model, which corresponds
%to weighting the least squares fit by the experimental distribution.
This is why we have also included the curve `Fit (emp)', which is based on an 
empirical choice of $(\alpha^\infty, M, q)$ from our numerical experiments
in the following Section \ref{sec:numerical}. There we keep $\alpha^\infty$
fixed as we vary $M$ and $q$. We will also incorporate the noise
level $\sigma^2$ into $\alpha$. It turns out that for the empirically
good distribution, $q$ is close to the values found by histogram
fitting above, but $\alpha^\infty$ is very different.

\section{Numerical reconstructions}
\label{sec:numerical}

%\newtheorem{Algorithm}{Algorithm}
%\numberwithin{Algorithm}{section}
\floatstyle{ruled}
\newfloat{Algorithm}{t}{loa}

Next we provide a numerical solver for the following TV$^\varphi$ model, possibly including the $\eta$-terms, in the discrete setting:
\begin{multline} \label{eq:dtvphi}
\min_u f(u):=\sum_{k\in\Omega_h}\Biggl(\alpha\varphi(|\nabla u(k)|) +\frac12|z(k)-u(k)|^2 
    \\ +\eta_0\sum_{l=1}^{N}\left(\sqrt{1+|\nabla u(k)|^2}-\sqrt{1+|\nabla_{\epsilon_l}u(k)|^2}\right)\Biggr).
\end{multline}
Here $\varphi$ is given by \eqref{eq:phi-linearised}, and $\alpha$, $\eta_0$ are manually chosen to balance the weights of the respective terms. 
For an image $u$ of resolution $n_1$-by-$n_2$, we set %the mesh size 
$h:=\sqrt{1/n_1n_2}$, $\Omega_d:=[0,1]^2 \cap (h\mathbb{Z}^2)$, and discretize the gradient by forward differences, i.e.~
\[
    \nabla u(k):=\left((u(k+e_1)-u(k))/h,~(u(k+e_2)-u(k))/h\right), \quad \text{for all} \quad k\in\Omega_d,
\]    
with homogeneous Dirichlet boundary condition.
Each $\nabla_{\epsilon_l}u:= \rho_{\epsilon_l}*\nabla u$ is defined through the convolution with a prescribed smoothing kernel $\rho_{\epsilon_l}$ ($\epsilon_l>0$). Here $\rho_{\epsilon_l}$ is specified as a two-dimensional Gaussian distribution of standard deviation $\epsilon_l$ centered at the origin.

To cope with the kink of the non-smooth $\varphi$ term at zero, we introduce a Huber-type local smoothing \cite{HiWu13_siims,HiWu14_coap} by replacing $\varphi$ in (\ref{eq:dtvphi}) with a continuously differentiable function $\varphi_\gamma$ with locally Lipschitz derivative. More specifically, let $0<\gamma\ll M$ be the smoothing parameter and $\varphi_\gamma:[0,\infty)\to[0,\infty)$ be %, whose derivative is locally Lipschitz, 
defined by

\begin{equation} \nonumber
\varphi_\gamma(t)=
\begin{cases}
\varphi(t)-(\varphi(\gamma)-\frac12\varphi'(\gamma)\gamma) & \text{if }t\geq\gamma, \\
\frac{\varphi'(\gamma)}{\gamma^2}t^2 & \text{if }0\leq t<\gamma.
\end{cases}
\end{equation}
Thus, the resulting Huberized TV$^{\varphi_\gamma}$ model appears as
\begin{multline} \label{eq:sdtvphi}
\min_u f_\gamma(u):=\sum_{k\in\Omega_d}\Biggl(\alpha\varphi_\gamma(|\nabla u(k)|)+\frac12|z(k)-u(k)|^2+
    \\
    \eta_0\sum_{l=1}^{N}\left(\sqrt{1+|\nabla u(k)|^2}-\sqrt{1+|\nabla_{\epsilon_l}u(k)|^2}\right)\Biggr).
\end{multline}
For this problem, the first-order optimality condition reads:
\begin{equation} \label{eq:sele}
\left\{\begin{array}{l}
\alpha\nabla^\top p+u-z+\eta_0\sum_{l=1}^N\left(\nabla^\top\Big(\dfrac{\nabla u}{\sqrt{1+|\nabla u|^2}}\Big)-\nabla_{\epsilon_l}^\top\Big(\dfrac{\nabla_{\epsilon_l}u}{\sqrt{1+|\nabla_{\epsilon_l}u|^2}}\Big)\right)=0, \\
\psi(\max(|\nabla u|,\gamma))p=\nabla u,
\end{array}\right.
\end{equation}
where $p\in\left(\mathbb{R}^{|\Omega_d|}\right)^2$ is an auxiliary variable and $\psi:(0,\infty)\to(0,\infty)$ is defined by $\psi(t):=t/\varphi'(t)$. Note that $\psi$ is locally Lipschitz and monotonically increasing, and in the following we shall denote by $\partial\psi$ a subdifferential of $\psi$.
We remark that the consistency of the Huberized stationary points induced by (\ref{eq:sele}) towards the stationary points of the original model (\ref{eq:dtvphi}) 
%can be established following the approach 
was investigated 
in the previous work \cite{HiWu13_siims,HiWu14_coap}. 
Moreover, the system \eqref{eq:sele} is not differentiable in the classical sense.
Therefore, in the following we present a generalized Newton-type solver for computing a stationary point satisfying (\ref{eq:sele}). 

Given the current iterate $(u^i,p^i)$, our solver relies on the following regularized linear system arising from differentiating (\ref{eq:sele}) (and further straightforward manipulations; see \cite{HiWu13_siims,HiWu14_coap}):
\begin{equation} \notag
(H^i+\beta^iR^i)\delta u^i=-g^i,
\end{equation}
with
\begin{align}
m^i &:= \max(|\nabla u|,\gamma), \nonumber\\
\chi^i(k) &:=
\begin{cases}
1 & \text{if }|\nabla u(k)|\geq\gamma, \\
0 & \text{if }|\nabla u(k)|<\gamma, 
\end{cases} \nonumber\\
H^i &:= I+\alpha\nabla^\top\frac{1}{\psi(m^i)}\left(I-\frac{\chi^i\partial\psi(m^i)}{2m^i}\left((p^i)(\nabla u^i)^\top+(\nabla u^i)(p^i)^\top\right)\right)\nabla \nonumber\\
&\quad\quad +\eta_0\sum_{l=1}^N\left(\nabla^\top\frac{1}{\sqrt{1+|\nabla u^i|^2}}\left(I-\frac{(\nabla u^i)(\nabla u^i)^\top}{1+|\nabla u^i|^2}\right)\nabla \right.\nonumber\\
& \quad\quad \left.-\nabla_{\epsilon_l}^\top\frac{1}{\sqrt{1+|\nabla u^i|^2}}\left(I-\frac{(\nabla u^i)(\nabla u^i)^\top}{1+|\nabla u^i|^2}\right)\nabla_{\epsilon_l} \right), \nonumber\\
R^i &:= \varepsilon I+\alpha\nabla^\top\frac{\chi^i\partial\psi(m^i)}{2\psi(m^i)m^i}\left((p^i)(\nabla u^i)^\top+(\nabla u^i)(p^i)^\top\right)\nabla \nonumber\\
&\quad\quad +\eta_0\sum_{l=1}^N\nabla_{\epsilon_l}^\top\frac{1}{\sqrt{1+|\nabla u^i|^2}}\left(I-\frac{(\nabla u^i)(\nabla u^i)^\top}{1+|\nabla u^i|^2}\right)\nabla_{\epsilon_l}, \nonumber\\
g^i &:=\nabla f_\gamma(u^i)=\alpha\nabla^\top\Big(\frac{\nabla u^i}{\psi(m^i)}\Big) +u^i-z
    \\
    & \hspace{15ex} +\eta_0\sum_{l=1}^N\left(\nabla^\top\Big(\dfrac{\nabla u^i}{\sqrt{1+|\nabla u^i|^2}}\Big)-\nabla_{\epsilon_l}^\top\Big(\dfrac{\nabla_{\epsilon_l}u^i}{\sqrt{1+|\nabla_{\epsilon_l}u^i|^2}}\Big)\right). \nonumber
\end{align}
Here $H^i$ represents a (modified) generalized Hessian matrix of $f_\gamma$ at $u^i$, while $R^i$, with an
arbitrarily fixed $0<\varepsilon\ll\alpha$, serves as a structural Hessian
regularization. 
Note that $H^i$ may not be positive definite at the iterate $u^i$. 
For this reason, the regularization weight $\beta^i$ is automatically tuned by a
trust-region based mechanism; see steps 8--20.
Further, whenever $\beta^i=1$, the regularized Hessian, 
i.e.~$H^i+\beta^iR^i$, 
is positive definite. Consequently, our $\beta^i$-update scheme guarantees $\delta u^i$ to be a descent direction for $f_\gamma$ at $u^i$,
and thus the overall iterative scheme can be globalised by, e.g., the Wolfe-Powell line
search \cite{DeSc96}; see step 21 in Algorithm \ref{algrrn}. 
Moreover, following the algorithm development in \cite{HiWu13_siims,HiWu14_coap}, one can show that $\beta^i$ asymptotically vanishes as $u^i$ approaches a stationary point where a certain type of second-order sufficient optimality condition is satisfied. Thus, local superlinear convergence
can be attained. The overall algorithm is detailed in Algorithm \ref{algrrn} below. The following parameters associated with Algorithm \ref{algrrn} are specified throughout our experiments: $c=1$, $\underline{\rho}=0.25$, $\bar\rho=0.75$, $\underline{\kappa}=0.5$, $\bar{\kappa}=2$, $\epsilon_d=10^{-10}$, $\tau_1=0.1$, $\tau_2=0.9$, $\gamma=0.001$, $\epsilon_{r}=0.01$.
Algorithm \ref{algrrn} is terminated once $\|\nabla f_\gamma(u^i)\|/\|\nabla f_\gamma(u^0)\|$ drops below $10^{-7}$.

\begin{Algorithm}[!ht]
\caption{Superlinearly convergent Newton-type method for $\TV^{\phi_\gamma}$ denoising}
\label{algrrn} {\ } 
\begin{algorithmic}[1]
\REQUIRE 
%input parameters 
$c>0$, $0<\underline{\rho}\leq\bar{\rho}<1$, $0<\underline{\kappa}<1<\bar{\kappa}$, $\epsilon_d>0$, $0<\tau_1<1/2$, $\tau_1<\tau_2<1$, $0<\gamma\ll1$, $0<\epsilon_{r}<1$. %, $\{\epsilon_l\}_{l=1}^N$.
\STATE Initialize the iterate $(u^0,p^0)$, the regularization weight $\beta^{0}\geq 0$, and the trust-region radius $r^{0}>0$. Set $i:=0$.
\REPEAT%[outer loop] 
\STATE Generate $H^i$, $R^i$, and $g^i$ at the current iterate $(u^i,p^i)$. \label{algm14}
%\REPEAT[inner loop] \label{algm3}
\STATE Solve the linear system $(H^i+\beta^i R^i)\delta u^i=-g^i$ (inexactly) for $\delta u^i$
by the conjugate gradient (CG) method up to the residual tolerance $\epsilon_{r}$, or detect the non-positive definiteness of $H^i+\beta^i R^i$ during the CG iterations. \label{algm1}
\IF{$H^i+\beta^i R^i$ is not positive definite \OR $-((g^i)^\top \delta u^i)/(\|g^i\|\|\delta u^i\|)<\epsilon_d$} \label{algm6}
\STATE Set $\beta^i:=1$, and return to step \ref{algm1}. \label{algm12}
\ENDIF \label{algm7}
\IF{$\beta^i=1$ \AND $(\delta u^i)^\top R^i\delta u^i>(r^i)^2$} \label{algm8}
\STATE Set $r^i:=\sqrt{(\delta u^i)^\top R^i\delta u^i}$, $\beta^{i+1}:=1$, and go to step \ref{algm2}.
\ELSE 
\STATE Set $\beta^{i+1}:=\max(\min(\beta^i+c^{-1}((\delta u^i)^\top R^i\delta u^i)-(r^i)^2),1),0)$. \label{algm10}
\ENDIF \label{algm9}
%\STATE Project $\beta^i$ onto the interval $[0,1]$, i.e.~set $\beta^i:=\max(\min(\beta^i,1),0)$. \label{algm11}
%\UNTIL{the stopping criterion for the inner loop is fulfilled.} \label{algm4}
\STATE Evaluate $\rho^i:= \left(f_{\gamma}(u^i+\delta u^i)-f_{\gamma}(u^i)\right)/\left((g^i)^\top \delta u^i+(\delta u^i)^\top H^i\delta u^i/2\right).$ \label{algm2}
\IF{$\rho^i<\underline{\rho}$}
\STATE Set $r^{i+1}:=\underline{\kappa}r^i$.
\ELSIF{$\rho^i>\bar{\rho}$}
\STATE Set $r^{i+1}:= \bar{\kappa}r^i$.
\ELSE
\STATE Set $r^{i+1}:=r^i$.
\ENDIF
\STATE Determine the step size $a^i$ along the search direction $\delta u^i$ such that $u^{i+1}=u^i+a^i\delta u^i$ satisfies the following Wolfe-Powell conditions: \label{algm5}
\vspace{-.1cm}
%\begin{align}
\[
\begin{cases}
f_{\gamma}(u^{i+1}) \leq f_{\gamma}(u^i)+\tau_1a^i(g^i)^\top \delta u^i, \\
\nabla f_{\gamma}(u^{i+1})^\top \delta u^i \geq \tau_2 (g^i)^\top \delta u^i. 
\end{cases}
\]
%\end{align}
\vspace{-.6cm}
\STATE %Set $\delta u^{i}:=\delta u^i$ and compute $\delta p^{i}$ according to (\ref{puf}). 
Generate the next iterate: 
\begin{align}
u^{i+1} &:=u^i+a^i \delta u^i, \nonumber\\[-2pt]
%p^{i+1}&:=(1-a^i)p^i+a^i\varphi(m^i)^{-1}\left(\nabla u^i+(1-\chi_{\cA^i}\dfrac{\varphi'(|\nabla u^i|)(\nabla u^i)p^i}{m^i})G \delta u^i\right).
%p^{i+1}&:=\varphi(m^i)^{-1}\left(\nabla u^i+(1-p^i\dfrac{\varphi'(m^i)(\nabla u^i)}{m^i})\nabla\delta u^i\right). \label{pupd}
p^{i+1} &:= \frac{1}{\psi(m^i)}\left(\nabla u^i+\nabla\delta u^i-\frac{\chi^i\partial\psi(m^i)}{2m^i}\left((p^i)(\nabla u^i)^\top+(\nabla u^i)(p^i)^\top\right)\nabla\delta u^i\right). \nonumber
\end{align}
\vspace{-.3cm}
%\STATE Initialize the $R$-regularization weight $\beta^{i+1}:=\beta^{i}$ for the next iteration. \label{algm13}
\STATE Set $i:= i+1$. 
\UNTIL{the stopping criterion is fulfilled.}
\end{algorithmic}
\end{Algorithm}

% for i in *; do j=$(echo $i|sed 's/[^_]*_//'); echo $j; done
\newcommand{
    %\begin{fullmdframed}
    \begin{figure}%[t]
        \input{img/-data.tex}%
        \centering%
        \setlength{\w}{}%
        \def\resimgnoisy###{%
                \begin{subfigure}[t]{\w}%
                    \centering%
                    \includegraphics[width=\w,keepaspectratio]{results/#-thr=#}%
                    \caption{$M=#$}%
                    \label{fig:res-#:thr-#}%
                \end{subfigure}%
        }%
        \begin{subfigure}[t]{\w}%
            \centering%
            \includegraphics[width=\w,keepaspectratio]{img/-orig}%
            \caption{Original}%
            \label{fig:res-:orig}%
        \end{subfigure}
        \begin{subfigure}[t]{\w}%
            \centering%
            \includegraphics[width=\w,keepaspectratio]{img/-noisy}%
            \caption{Noisy image}%
            \label{fig:res-:noisy}%
        \end{subfigure}
        % 0 10 20 30 40 50 60 inf
        \def\resimgnoisythis##{\resimgnoisy{}{#}{#}}%
        \caption{ denoising results with noise level $\sigma=$ (Gaussian), for varying cut-off $M$,
            fixed $q=$ and fixed $\alpha^\infty=$.}
        \label{fig:res-}
    \end{figure}
    %\end{fullmdframed}
}[7]{
    \begin{figure}%[t]
        \input{img/#1-data.tex}%
        \centering%
        \setlength{\w}{#4}%
        \def\resimgnoisy##1##2##3{%
                \begin{subfigure}[t]{\w}%
                    \centering%
                    \includegraphics[width=\w,keepaspectratio]{results/##1-thr=##2}%
                    \caption{$M=##3$}%
                    \label{fig:res-##1:thr-##2}%
                \end{subfigure}%
        }%
        \begin{subfigure}[t]{\w}%
            \centering%
            \includegraphics[width=\w,keepaspectratio]{img/#1-orig}%
            \caption{Original}%
            \label{fig:res-#1:orig}%
        \end{subfigure}
        \begin{subfigure}[t]{\w}%
            \centering%
            \includegraphics[width=\w,keepaspectratio]{img/#1-noisy}%
            \caption{Noisy image}%
            \label{fig:res-#1:noisy}%
        \end{subfigure}
        % 0 10 20 30 40 50 60 inf
        \def\resimgnoisythis##1##2{\resimgnoisy{#1}{##1}{##2}}%
        #7%
        \caption{#2: denoising results with noise level $\sigma=#3$ (Gaussian), for varying cut-off $M$,
            fixed $q=#5$ and fixed $\alpha^\infty=#6$.}
        \label{fig:res-#1}
    \end{figure}
}

\def\ssimoptimal#1{#1 \text{ (SSIM-optimal)}}
\def\psnroptimal#1{#1 \text{ (PSNR-optimal)}}

    %\begin{fullmdframed}
    \begin{figure}%[t]
        \input{img/dock-data.tex}%
        \centering%
        \setlength{\w}{0.495\textwidth}%
        \def\resimgnoisy#dock#Pier photo#30{%
                \begin{subfigure}[t]{\w}%
                    \centering%
                    \includegraphics[width=\w,keepaspectratio]{results/#dock-thr=#Pier photo}%
                    \caption{$M=#30$}%
                    \label{fig:res-#dock:thr-#Pier photo}%
                \end{subfigure}%
        }%
        \begin{subfigure}[t]{\w}%
            \centering%
            \includegraphics[width=\w,keepaspectratio]{img/dock-orig}%
            \caption{Original}%
            \label{fig:res-dock:orig}%
        \end{subfigure}
        \begin{subfigure}[t]{\w}%
            \centering%
            \includegraphics[width=\w,keepaspectratio]{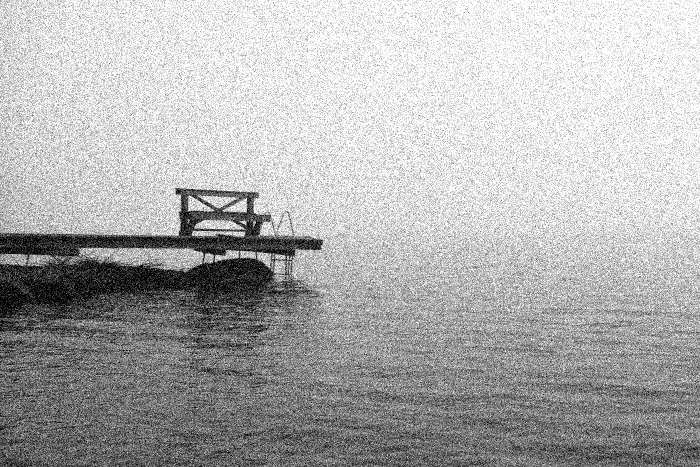}%
            \caption{Noisy image}%
            \label{fig:res-dock:noisy}%
        \end{subfigure}
        % 0 10 20 30 40 50 60 inf
        \def\resimgnoisythis#dock#Pier photo{\resimgnoisy{dock}{#dock}{#Pier photo}}%
    \resimgnoisythis{0}{0} %\ (\TV)}
    \resimgnoisythis{10}{\psnroptimal{10}} %
    %\resimgnoisythis{20}{20}
    %\resimgnoisythis{30}{30}
    \resimgnoisythis{40}{\ssimoptimal{40}} %
    %\resimgnoisythis{50}{50}
    %%\resimgnoisythis{60}{60}
    \resimgnoisythis{Inf}{\infty}%
        \caption{Pier photo denoising results with noise level $\sigma=30$ (Gaussian), for varying cut-off $M$,
            fixed $q=0.4$ and fixed $\alpha^\infty=0.0207%1.280698\ee^{-4}
    $.}
        \label{fig:res-dock}
    \end{figure}
    %\end{fullmdframed}

    %\begin{fullmdframed}
    \begin{figure}%[t]
        \input{img/parrot-data.tex}%
        \centering%
        \setlength{\w}{0.495\textwidth}%
        \def\resimgnoisy#parrot#Parrot photo#30{%
                \begin{subfigure}[t]{\w}%
                    \centering%
                    \includegraphics[width=\w,keepaspectratio]{results/#parrot-thr=#Parrot photo}%
                    \caption{$M=#30$}%
                    \label{fig:res-#parrot:thr-#Parrot photo}%
                \end{subfigure}%
        }%
        \begin{subfigure}[t]{\w}%
            \centering%
            \includegraphics[width=\w,keepaspectratio]{img/parrot-orig}%
            \caption{Original}%
            \label{fig:res-parrot:orig}%
        \end{subfigure}
        \begin{subfigure}[t]{\w}%
            \centering%
            \includegraphics[width=\w,keepaspectratio]{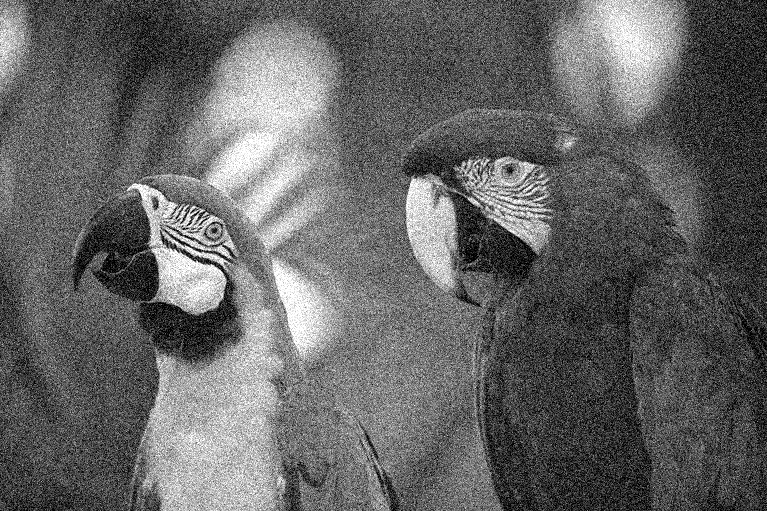}%
            \caption{Noisy image}%
            \label{fig:res-parrot:noisy}%
        \end{subfigure}
        % 0 10 20 30 40 50 60 inf
        \def\resimgnoisythis#parrot#Parrot photo{\resimgnoisy{parrot}{#parrot}{#Parrot photo}}%
    \resimgnoisythis{0}{\psnroptimal{0}} %\ (\TV)}}
    \resimgnoisythis{15}{\ssimoptimal{15}} %
    %\resimgnoisythis{20}{20}
    %\resimgnoisythis{30}{30} %
    \resimgnoisythis{40}{40}
    %\resimgnoisythis{50}{50}
    %\resimgnoisythis{60}{60}
    \resimgnoisythis{Inf}{\infty}%
        \caption{Parrot photo denoising results with noise level $\sigma=30$ (Gaussian), for varying cut-off $M$,
            fixed $q=0.5$ and fixed $\alpha^\infty=0.0253%1.427089\ee^{-4}
    $.}
        \label{fig:res-parrot}
    \end{figure}
    %\end{fullmdframed}

    %\begin{fullmdframed}
    \begin{figure}%[t]
        \input{img/summer-data.tex}%
        \centering%
        \setlength{\w}{0.325\textwidth}%
        \def\resimgnoisy#summer#Summer photo0.00430%7.236813\ee{-5}
    0{%
                \begin{subfigure}[t]{\w}%
                    \centering%
                    \includegraphics[width=\w,keepaspectratio]{results/#summer-thr=#Summer photo}%
                    \caption{$M=0.00430%7.236813\ee{-5}
    0$}%
                    \label{fig:res-#summer:thr-#Summer photo}%
                \end{subfigure}%
        }%
        \begin{subfigure}[t]{\w}%
            \centering%
            \includegraphics[width=\w,keepaspectratio]{img/summer-orig}%
            \caption{Original}%
            \label{fig:res-summer:orig}%
        \end{subfigure}
        \begin{subfigure}[t]{\w}%
            \centering%
            \includegraphics[width=\w,keepaspectratio]{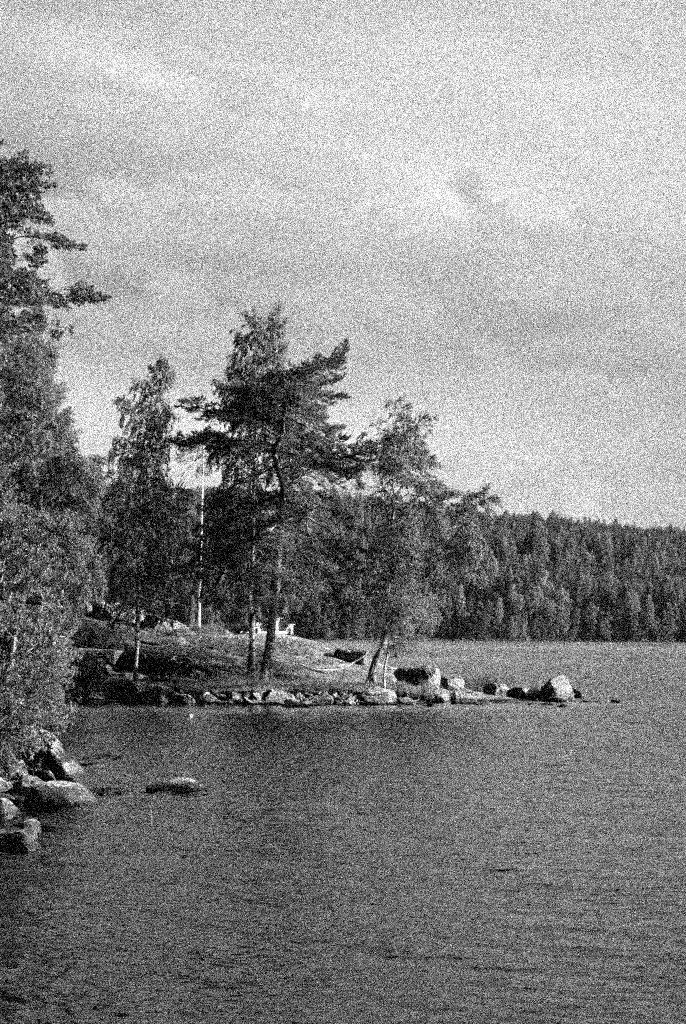}%
            \caption{Noisy image}%
            \label{fig:res-summer:noisy}%
        \end{subfigure}
        % 0 10 20 30 40 50 60 inf
        \def\resimgnoisythis#summer#Summer photo{\resimgnoisy{summer}{#summer}{#Summer photo}}%
    \resimgnoisythis{0}{0} %\ (\TV)}
    %\resimgnoisythis{10}{10}
    \resimgnoisythis{20}{\psnroptimal{20}} %
    %\resimgnoisythis{30}{30}
    \resimgnoisythis{40}{\ssimoptimal{40}} %
    %\resimgnoisythis{50}{50}
    %\resimgnoisythis{60}{60}
    \resimgnoisythis{Inf}{\infty}%
        \caption{Summer photo denoising results with noise level $\sigma=60$ (Gaussian), for varying cut-off $M$,
            fixed $q=0.3$ and fixed $\alpha^\infty=0.00430%7.236813\ee{-5}
    $.}
        \label{fig:res-summer}
    \end{figure}
    %\end{fullmdframed}

\begin{table}[!ht]
    \caption{Denoising results of all the three test photos.
        The noise level $\sigma$, exponent $q$ and asymptotic
        regularisation $\alpha^\infty$ are fixed. 
        The cut-off point $M$ varies. We report both the PSNR and the SSIM,
        with the optimal values underlined.}
    \label{table:res}
    \centering
    \begin{tabular}{lllllllll}%
        \hline
        \multicolumn{9}{l}{\textbf{Parrot photo} / $\sigma=30$, $q=0.5$, $\alpha^\infty=0.0253$}
        \\\hline
        $M$&0&10&15 %&20
        &30&40&50&60&$\infty$\\
        PSNR&\underline{30.3432}&29.9156&29.5876 %&29.2763
        &28.8098&28.3655&28.0398&27.7424&28.6829\\
        SSIM&0.7914&0.7919&\underline{0.7922} %&0.7916
        &0.7906&0.7887&0.7854&0.7823&0.7552\\
        \hline
        \multicolumn{9}{l}{\textbf{Pier photo} / $\sigma=30$, $q=0.4$, $\alpha^\infty=0.0207$}
        \\\hline
        $M$&0&10&20&30&40&50&60&$\infty$\\
        PSNR&29.0019&\underline{29.3959}&29.3472&29.0918&28.8988&28.679&28.5327&28.5836\\
        SSIM&0.6737&0.7191&0.7477&0.7556&\underline{0.7619}&0.7608&0.7593&0.7297\\
        \hline
        \multicolumn{9}{l}{\textbf{Summer photo} / $\sigma=30$, $q=0.4$, $\alpha^\infty=0.00430$}
        \\\hline
        $M$&0&10&20&30&40&50&60&$\infty$\\
        PSNR&26.0919&26.2750&\underline{26.2851}&26.0643&25.7443&25.3627&25.0449&25.2755\\
        SSIM&0.589&0.6175&0.6489&0.6641&\underline{0.6644}&0.6615&0.6543&0.6175\\
    \end{tabular}
\end{table}

We report in Figures \ref{fig:res-dock}--\ref{fig:res-summer} and 
Table \ref{table:res} the results of denoising our three test images
using this algorithm with rather high artificial noise levels. 
We have added Gaussian noise of standard deviation
$\sigma=30$ to all test images.
We report both the conventional peak-signal-to-noise ratio (PSNR) as well as
the structural similarity measure (SSIM) of \cite{wang2004ssim}. The
latter better quantifies the visual quality of images by
essentially computing the PSNR in local windows and combining the results
in a non-linear fashion. The range of the SSIM is $[0, 1]$, the higher
the better.

In our computations, we keep $\alpha^\infty$ and $q$ fixed, and vary $M$ (by altering $\alpha$ as
necessary). For $M=\infty$, i.e., the original $\TV^q$ model, we simply take $\alpha$ as our chosen fixed
$\alpha^\infty$. This is because $\phi^\infty=0$, so the real asymptotic alpha for the model is always zero.
It is quite remarkable that in our results fine features of the images are always retained
very well although higher $M$ tends to increase the stair-casing effect (not applicable to $M=\infty$).
At the optimal choice of $M$ by PSNR or SSIM, more noise can be seen to be 
removed than by $\TV$ ($M=0$). Generally, we can say that adding the cut-off
$M$ improves the results compared to the earlier $\TV^q$ model without 
cut-off ($M=\infty$). Whether the results are better than conventional $\TV$ 
denoising is open to debate. By PSNR and SSIM the results tend to favor the TV$^\varphi$-model.
Visually oscillatory effects of noise are better removed, but at
the same time the stair-casing is accentuated. The best result is
in the eye of the beholder.

We also tested on the parrot photo the effect of the multiscale regularisation
term $\eta$ by including the first term $\eta_1$ of the sum for varying 
weights of $\eta_0$ and convolution kernel widths $\epsilon_1$.
The results are in Figure \ref{fig:reseta-parrot} 
and Table \ref{table:reseta}. Clearly large $\eta_0$ has a deteriorating
effect on both PSNR and SSIM, whereas the effect of the choice of
$\epsilon_1$ is less severe. Visually, large $\eta_0$ creates an
almost artistic quantisation and feature-filtering effect. The
latter is also controlled by $\epsilon_1$: large $\epsilon_1$
tends to remove large features. A particular feature to notice
is the eye of the parrot on the right in
Figure \ref{fig:reseta-parrot}(\subref{fig:reseta-parrot:beta=7d071e-03_eps=1d0e+00})
versus (\subref{fig:reseta-parrot:beta=7d071e-03_eps=2d0e+00}).
It has disappeared altogether in the latter.

\newcommand{
    \begin{figure}[!ht]
        \input{img/-data.tex}%
        \centering%
        \setlength{\w}{}%
        \def\resimgnoisy###{%
                \begin{subfigure}[t]{\w}%
                    \centering%
                    \includegraphics[width=\w,keepaspectratio]{results/eta_test_#_#}%
                    \caption{#}%
                    \label{fig:reseta-#:#}%
                \end{subfigure}%
        }
        \def\resimgnoisythis##{\resimgnoisy{}{#}{#}}%
        \caption{Effect of the $\eta$ term on the . 
            Only the first term $\eta_1$ of the sum is included,
            with varying convolution width $\epsilon_1$
            and weight $\eta_0$.
            The noise level $\sigma=$ (Gaussian), cut-off
            $M=$, exponent $q=$ and asymptotic regularisation
            $\alpha^\infty=$ are fixed.}
        \label{fig:reseta-}
    \end{figure}
}[8]{
    \begin{figure}[!ht]
        \input{img/#1-data.tex}%
        \centering%
        \setlength{\w}{#4}%
        \def\resimgnoisy##1##2##3{%
                \begin{subfigure}[t]{\w}%
                    \centering%
                    \includegraphics[width=\w,keepaspectratio]{results/eta_test_##1_##2}%
                    \caption{##3}%
                    \label{fig:reseta-##1:##2}%
                \end{subfigure}%
        }
        \def\resimgnoisythis##1##2{\resimgnoisy{#1}{##1}{##2}}%
        #8%
        \caption{Effect of the $\eta$ term on the #2. 
            Only the first term $\eta_1$ of the sum is included,
            with varying convolution width $\epsilon_1$
            and weight $\eta_0$.
            The noise level $\sigma=#3$ (Gaussian), cut-off
            $M=#7$, exponent $q=#5$ and asymptotic regularisation
            $\alpha^\infty=#6$ are fixed.}
        \label{fig:reseta-#1}
    \end{figure}
}

    \begin{figure}[!ht]
        \input{img/parrot-data.tex}%
        \centering%
        \setlength{\w}{0.495\textwidth}%
        \def\resimgnoisy#parrot#parrot photo#30{%
                \begin{subfigure}[t]{\w}%
                    \centering%
                    \includegraphics[width=\w,keepaspectratio]{results/eta_test_#parrot_#parrot photo}%
                    \caption{#30}%
                    \label{fig:reseta-#parrot:#parrot photo}%
                \end{subfigure}%
        }
        \def\resimgnoisythis#parrot#parrot photo{\resimgnoisy{parrot}{#parrot}{#parrot photo}}%
    \resimgnoisythis{beta=7d071e-03_eps=1d0e+00}{$\eta_0=7.071\ee^{-03}$, $\epsilon_1=1.0$}
    \resimgnoisythis{beta=7d071e-03_eps=2d0e+00}{$\eta_0=7.071\ee^{-03}$, $\epsilon_1=2.0$}
    \\
    \resimgnoisythis{beta=7d071e-04_eps=1d0e+00}{$\eta_0=7.071\ee^{-04}$, $\epsilon_1=1.0$}
    \resimgnoisythis{beta=7d071e-04_eps=2d0e+00}{$\eta_0=7.071\ee^{-04}$, $\epsilon_1=2.0$}
    \\
    \resimgnoisythis{beta=7d071e-05_eps=1d0e+00}{$\eta_0=7.071\ee^{-05}$, $\epsilon_1=1.0$}
    \resimgnoisythis{beta=7d071e-05_eps=2d0e+00}{$\eta_0=7.071\ee^{-05}$, $\epsilon_1=2.0$}
        \caption{Effect of the $\eta$ term on the parrot photo. 
            Only the first term $\eta_1$ of the sum is included,
            with varying convolution width $\epsilon_1$
            and weight $\eta_0$.
            The noise level $\sigma=30$ (Gaussian), cut-off
            $M=10$, exponent $q=0.5$ and asymptotic regularisation
            $\alpha^\infty=0.0253%1.427089\ee^{-4}
    $ are fixed.}
        \label{fig:reseta-parrot}
    \end{figure}

\begin{table}[!ht]
    \caption{Effect of the $\eta$ term on the parrot photo. 
        Only the first term $\eta_1$ of the sum is included,
        with varying convolution width $\epsilon_1$
        and weight $\eta_0$.
        The noise level $\sigma=30$ (Gaussian), cut-off
        $M=10$, exponent $q=0.5$ and asymptotic regularisation
        $\alpha^\infty=0.0253$ are fixed.
        Optimal SSIM and PSNR are underlined.}
    \label{table:reseta}
    \centering
    \begin{tabular}{l|ll|ll|ll}%
        &\multicolumn{2}{c|}{$\epsilon_1=0.5$}&
        \multicolumn{2}{c|}{$\epsilon_1=1$}&
        \multicolumn{2}{c}{$\epsilon_1=2$}
        \\\hline
        &PSNR&SSIM&PSNR&SSIM&PSNR&SSIM
        \\\hline
        $\eta_0=0.1\alpha$ &\underline{29.9433}&0.8036&29.8023&\underline{0.8091}&29.6197&0.8085 \\
        $\eta_0=\alpha$ &29.2700&0.8072&28.2237&0.8014&27.5659&0.7970 \\
        $\eta_0=10\alpha$ &26.8069&0.7939&25.7874&0.7921&24.9003&0.7886 \\
    \end{tabular}
\end{table}

%%%                 
\section{Conclusion}
\label{sec:concl}
%%%

We have studied difficulties with non-convex total variation models in
the function space setting. We have demonstrated that the model
\eqref{eq:tvqd} continues to do what it is proposed to do in the
discrete setting -- to promote piecewise constant solutions -- for most, 
but not all, energies $\phi$, employed in the literature. Naïve forms of
the model \eqref{eq:tvqc}, proposed to model real gradient 
distributions in images, however have much more severe difficulties.
We have shown that the model can be remedied if we replace the topology
of weak* convergence by that of area strict convergence. In order to
do this, we have to add additional multiscale regularisation in terms 
of the functional $\eta$ into the model, and to linearise the energy
$\phi$ for large gradients. The latter is needed to make the model 
BV-coercive, and to have any kind of penalisation for jumps. We have
demonstrated through numerical experiments and simple statistics that
this model, in fact, better matches reality than the simple energies
$\phi(t)=t^q$. \emph{Our purely theoretical starting point has therefore 
led to improved practical models.} The $\eta$ functional, however, 
remains a ``theoretical artefact''. It has its own regularisation 
effect that, naturally, does not distort the results too much for 
small parameters (though it does so for large parameters). 
As we shown in Proposition \ref{prop:partial-eta-sum-limit}, it 
can be ignored in discretisations when not passing to the function
space limit.

%%%
\section*{Acknowledgements}
%%%

While in Cambridge, T.~Valkonen has been financially supported by the 
King Abdullah University of Science and Technology (KAUST) Award 
No.~KUK-I1-007-43, and the EPSRC first grant Nr.~EP/J009539/1
``Sparse \& Higher-order Image Restoration''. In Quito, T.~Valkonen
has been supported by a Prometeo scholarship of the Senescyt.
M.~Hinterm\"uller and T.~Wu have been supported by the Austrian
FWF SFB F32 ``Mathematical Optimization in Biomedical Sciences'', 
and the START-Award Y305.

\bibliography{abbrevs,bib,bib-own}

\appendix

%%%%%%%%%%%%%%%%%%%%%%%%%%%%%%%%%%%%%%%%%%%%%%%%%%%%
\section{Vectorial $\eta$ functional}
\label{app:vecteta}
%%%%%%%%%%%%%%%%%%%%%%%%%%%%%%%%%%%%%%%%%%%%%%%%%%%%

\newcommand{\INDEXX}{n}
\newcommand{\INDEXY}{\ell}

We now study a condition ensuring the convergence
of the total variation $\abs{\mu^i}(\Omega)$ subject to the
weak* convergence of the measures $\mu^i \in \Meas(\Omega; \R^m)$, ($i=0,1,2,\ldots$).
Improving a result first presented in \cite{tuomov-bd,tuomov-ap1},
we show in Theorem \ref{thm:eta} below that
if $\{f_\ell\}_{\ell=0}^\infty$ is a normalised nested
sequence of functions as in Definition \ref{def:nested}
below, then it suffices to bound
\begin{equation}
    \label{eq:def-eta-n}
    \eta(\mu^i) \defeq
        \sum_{\ell=0}^\infty \eta_\ell(\mu^i),
        \quad
        \text{where}
        \quad
        \eta_\ell(\mu^i) \defeq 
        \int \abs{\mu^i}(\tau_x f_\ell) - \abs{\mu^i(\tau_x f_\ell)} \d x.
    \quad (\mu \in \Meas(\Omega; \R^N)).
\end{equation}
Here we employ the notation $\tau_x f(y) \defeq f(y-x)$.
Also, we write $\abs{\mu^i(\tau_x f_\ell)} \defeq \norm{\mu^i(\tau_x f_\ell)}_2$.

\begin{definition}
    \label{def:nested}
    Let $f_\ell: \R^m \to \R$, ($\ell=0,1,2,\ldots$),
    be bounded Borel functions with compact support
    that are continuous in $\R^m \setminus S_{f_\ell}$,
    i.e.~the approximate discontinuity set equals
    the discontinuity set.
    %\TODO{$\H^{m-1}$-\ae?}
    Also let $\{\nu_\ell\}_{\ell=0}^\infty$ be a sequence in $\Meas(\R^m)$ with
    $\abs{\nu_\ell}(\R^m)=1$.
    The sequence $\{(f_\ell, \nu_\ell)\}_{\ell=0}^\infty$ is then
    said to form a \term{nested sequence of functions}
    if $f_\ell(x) = \int f_{\ell+1}(x-y) \d\nu_\ell(y)$ (a.e.).
    The sequence is said to be \term{normalised} if
    $f_\ell \ge 0$ and $\int f_\ell \d x = 1$.
    %The sequence is said to be \term{regular}, if
    %it is normalised, 
    %and there exist constants
    %$\alpha > 0$ and $\beta > 0$, and a sequence $h_\ell \downto 0$,
    %\begin{equation}
    %    \label{eq:h-n-sum-limit}
    %    \lim_{r \to 0} \sum_{\ell=0}^\infty \min \{h_\ell, r\} = 0,
    %\end{equation}
    %such that $\alpha h_\ell^{-m} \chi_{\B(0, \beta h_\ell)} 
    %    \le f_\ell 
    %    \le \inv \alpha h_\ell^{-m} \chi_{\B(0, h_\ell)}$.
\end{definition}

\begin{example}
    Let $\rho$ be the standard convolution mollifier such that
    \[
        \rho(x)
        \defeq
        \begin{cases}
            \exp(-1/(1-\norm{x}^2)), & \norm{x} < 1, \\
            0, & \norm{x} \ge 1,
        \end{cases}
    \]
    and define
    $\rho_\epsilon(x) \defeq \epsilon^{-m}\rho(x/\epsilon)$.
    Since $\rho_{\epsilon+\delta}=\rho_\epsilon*\rho_\delta$ where $*$ denotes a convolution,
    we deduce that
    $f_\ell \defeq \rho_{2^{-\ell}}$ and $\nu_\ell=\rho_{2^{-\ell-1}}$
    form a normalised nested sequence.
\end{example}

We require the following basic lemma for our vectorial case.

\begin{lemma}
    \label{lemma:integral-norm-inside}
    Let $\nu \in \Meas(\Omega)$ be a positive Radon measure, and $g \in L^1(\Omega; \R^N)$.
    Then
    \[
        \adaptnorm{\int_\Omega g(x) \d\nu(x)}_2 \le \int_\Omega \norm{g(x)}_2 \d\nu(x).
    \]
\end{lemma}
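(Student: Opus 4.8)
The plan is to reduce the vector inequality to the scalar case by testing against unit vectors, i.e.\ to use the duality formula $\norm{v}_2 = \sup\{\iprod{w}{v} \mid w \in \R^N,\ \norm{w}_2 \le 1\}$ for $v \in \R^N$. First I would fix an arbitrary $w \in \R^N$ with $\norm{w}_2 \le 1$ and write $\iprod{w}{\int_\Omega g(x)\d\nu(x)}$. Since $g \in L^1(\Omega;\R^N)$ means each component $g_i \in L^1(\Omega)$, the vector-valued integral is just the tuple of the component integrals, so linearity gives
\[
    \iprod{w}{\int_\Omega g(x)\d\nu(x)} = \int_\Omega \iprod{w}{g(x)}\d\nu(x).
\]
Next I would bound the integrand pointwise by Cauchy--Schwarz, $\iprod{w}{g(x)} \le \norm{w}_2 \norm{g(x)}_2 \le \norm{g(x)}_2$, noting that $x \mapsto \norm{g(x)}_2$ is measurable and lies in $L^1(\Omega)$ because $\norm{g(\cdot)}_2 \le \sum_{i=1}^N \abs{g_i(\cdot)}$. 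Monotonicity of the integral with respect to the \emph{positive} measure $\nu$ then yields
\[
    \int_\Omega \iprod{w}{g(x)}\d\nu(x) \le \int_\Omega \norm{g(x)}_2 \d\nu(x).
\]
Finally I would take the supremum over all admissible $w$ on the left-hand side, which by the duality formula equals $\adaptnorm{\int_\Omega g(x)\d\nu(x)}_2$, giving the claim.

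I do not expect any genuine obstacle here; the statement is the standard ``norm of the integral $\le$ integral of the norm'' fact for Bochner-type integrals against a positive measure. The only points requiring a word of care are the integrability of $\norm{g(\cdot)}_2$ (handled by the componentwise bound above) and the interchange of the fixed linear functional $w$ with the integral (immediate from the componentwise definition of the vector integral). If one wished to avoid even the mild subtlety of a supremum over an uncountable set, one could instead restrict $w$ to a countable dense subset of the closed unit ball, which does not change the supremum and makes the argument manifestly elementary.
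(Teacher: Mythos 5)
Your proof is correct, but it follows a genuinely different route from the paper's. You use the duality characterisation $\norm{v}_2 = \sup\{\iprod{w}{v} \mid \norm{w}_2 \le 1\}$, pull the fixed vector $w$ through the componentwise integral, and apply Cauchy--Schwarz pointwise together with monotonicity of integration against the positive measure $\nu$ --- the standard argument for ``norm of the integral $\le$ integral of the norm''. The paper instead writes a polar decomposition $g(x) = \theta(x) v(x)$ with $\theta \ge 0$ and $\norm{v(x)}_2 = 1$, introduces the positive measure $\lambda \defeq \theta\nu$, and applies Jensen's inequality with the convex function $t \mapsto t^2$ to the normalised measure $\lambda/\lambda(\Omega)$ in each component, summing via $\sum_j \abs{v_j(x)}^2 = 1$ to obtain $\bignorm{\int g \d\nu}_2^2 \le \lambda(\Omega)^2 = \bigl(\int \norm{g}_2 \d\nu\bigr)^2$. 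Your version is arguably the cleaner of the two: it avoids the normalisation by $\lambda(\Omega)$ (which in the paper's argument implicitly requires $\lambda(\Omega) > 0$, the degenerate case being trivial) and generalises immediately to any norm on $\R^N$ via its dual characterisation, whereas the Jensen computation is tied to the Euclidean structure. The paper's approach, on the other hand, stays entirely within direct integral estimates and needs no supremum over test vectors. Both establish the lemma; your attention to the integrability of $\norm{g(\cdot)}_2$ and to the legitimacy of exchanging $w$ with the integral covers the only points of care.
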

\begin{proof}
    For any $x \in \Omega$, we write $g(x)=\theta(x) v(x)$ with 
    $v(x)=(v_1(x), \ldots, v_N(x))$, $0 \le \theta(x)$, and $\norm{v(x)}=1$.
    Then we define $\lambda \defeq \theta\nu$. Now
    \[
        \begin{split}
        \adaptnorm{\int_\Omega g(x) \d\nu(x)}_2^2
        &
        =
        \sum_{j=1}^N
        \left(\int_\Omega v_j(x) \d\lambda(x)\right)^2
%        \\&
        =
        \sum_{j=1}^N
        \lambda(\Omega)^2
        \left(\frac{1}{\lambda(\Omega)}\int_\Omega v_j(x) \d\lambda(x)\right)^2
        \\
        &
        \le
        \sum_{j=1}^N
        \lambda(\Omega)
        \int_\Omega \abs{v_j(x)}^2 \d\lambda(x)
        =
        \lambda(\Omega)^2.
        \end{split}
    \]
    Here we have used Jensen's inequality.
    From this we conclude
    \[
        \lambda(\Omega)=\int_\Omega \theta(x) \d\nu(x) = \int_\Omega \norm{g(x)}_2 \d\nu(x),
    \]
    proving the claim.
\end{proof}

With the help of the above lemma, in
\cite{tuomov-ap1} Theorem \ref{thm:eta} below was 
proved exactly as the case of scalar-valued measures ($N=1$). 
Our proof here is however slightly different.
We base it on the following more general lemma on partial sums, 
which we also need for the proof of 
Proposition \ref{prop:partial-eta-sum-limit}.

\begin{lemma}
    \label{lemma:eta-ki}
    Let $\Omega \subset \R^m$ be an open and bounded set,
    and $\{(f_\ell, \nu_\ell)\}_{\ell=0}^\infty$ a normalised nested
    sequence of functions. 
    Let $\{K_i\}_{i=1}^\infty \subset \N^+$ satisfy
    $\lim_{i \to\infty} K_i = \infty$.
    Suppose $\{\mu^i\}_{i=0}^\infty \subset \Meas(\Omega; \R^N)$
    weakly* converges to $\mu \in \Meas(\Omega; \R^N)$ with
    \begin{equation}
        \label{eq:eta-ki-bnd1}
        \sup_i \abs{\mu^i}(\Omega) + \sum_{\ell=1}^{K_i} \eta_\ell(\mu^i) < \infty,
    \end{equation}
    and
    \begin{equation}
        \label{eq:eta-ki-bnd2}
        \eta(\mu) < \infty.
    \end{equation}
    Then
    \begin{equation}
        \label{eq:eta-ell-lsc}
        \eta_\ell(\mu) \le \liminf_{i \to \infty} \eta_\ell(\mu^i),
        \quad
        (\ell=0,1,2,\ldots).
    \end{equation}
    If also $\abs{\mu^i} \weaktostar \lambda$ in $\Meas(\Omega)$,
    then $\lambda=\abs{\mu}$.
    Moreover, provided the weak* convergences hold in 
    $\Meas(\R^m; \R^N)$, resp., $\Meas(\R^m)$, then
    \begin{equation}
        \label{eq:eta-ell-limit}
        \eta_\ell(\mu)=\lim_{i \to \infty} \eta_\ell(\mu^i),
        \quad
        (\ell=0,1,2,\ldots).
    \end{equation}
\end{lemma}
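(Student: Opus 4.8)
The plan is to isolate two ingredients: an almost-everywhere pointwise convergence statement for the convolution-type functions $x \mapsto \mu^i(\tau_x f_\ell)$ and $x \mapsto \abs{\mu^i}(\tau_x f_\ell)$, and a Cesàro-averaging device that converts the \emph{partial}-sum bound \eqref{eq:eta-ki-bnd1} into the no-concentration conclusion. As preliminaries I would rewrite $\eta_\ell$: since $f_\ell \ge 0$ and $\int_{\R^m} f_\ell \d x = 1$, Fubini gives $\int_{\R^m} \abs{\mu}(\tau_x f_\ell) \d x = \abs{\mu}(\Omega)$, so $\eta_\ell(\mu) = \abs{\mu}(\Omega) - \int_{\R^m} \abs{\mu(\tau_x f_\ell)} \d x$; and writing $\mu = \sigma\abs{\mu}$ with $\norm{\sigma} = 1$ $\abs{\mu}$-a.e.\ and applying Lemma \ref{lemma:integral-norm-inside} with $\nu = \abs{\mu}$ and $g(y) = f_\ell(y-x)\sigma(y)$ shows $\abs{\mu(\tau_x f_\ell)} \le \abs{\mu}(\tau_x f_\ell)$, so the integrand is nonnegative and $\eta_\ell \ge 0$. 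Finally, since the $\mu^i$ are supported in the compact set $\closure\Omega$ with uniformly bounded masses, I may pass to a subsequence along which $\mu^i \weaktostar \mu'$ and $\abs{\mu^i} \weaktostar \lambda'$ weakly* in $\Meas(\closure\Omega; \R^N)$, resp.\ $\Meas(\closure\Omega)$, with $\mu' \restrict \Omega = \mu$, $\lambda' \restrict \Omega \ge \abs{\mu}$, and $\abs{\mu'} \le \lambda'$ on $\closure\Omega$.

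For \eqref{eq:eta-ell-lsc} I would pass further to a subsequence realising $\liminf_i \eta_\ell(\mu^i)$. Because $S_{f_\ell}$ is Lebesgue null (it is the approximate discontinuity set of a bounded function), for a.e.\ $x$ the translate $\tau_x f_\ell$ is bounded and $\lambda'$-a.e.\ continuous on the compact $\closure\Omega$; approximating it from above and below by continuous functions and using $\abs{\mu^i} \weaktostar \lambda'$ to control the error then gives $\mu^i(\tau_x f_\ell) \to \mu'(\tau_x f_\ell)$ and $\abs{\mu^i}(\tau_x f_\ell) \to \lambda'(\tau_x f_\ell)$ for a.e.\ $x$. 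As the integrands are nonnegative, Fatou's lemma yields $\liminf_i \eta_\ell(\mu^i) \ge \int_{\R^m} \bigl(\lambda'(\tau_x f_\ell) - \abs{\mu'(\tau_x f_\ell)}\bigr) \d x$. Writing $\mu' = \mu + \beta$ with $\beta := \mu' \restrict \BD\Omega$, and using $\abs{\mu(\tau_x f_\ell) + \beta(\tau_x f_\ell)} \le \abs{\mu(\tau_x f_\ell)} + \abs{\beta(\tau_x f_\ell)}$ together with $\int_{\R^m} \abs{\beta(\tau_x f_\ell)} \d x \le \abs{\beta}(\BD\Omega) \le \lambda'(\BD\Omega)$ and $\lambda'(\closure\Omega) = (\lambda'\restrict\Omega)(\Omega) + \lambda'(\BD\Omega) \ge \abs{\mu}(\Omega) + \abs{\beta}(\BD\Omega)$, the right-hand side is bounded below by $\abs{\mu}(\Omega) - \int_{\R^m} \abs{\mu(\tau_x f_\ell)} \d x = \eta_\ell(\mu)$, which is finite (e.g.\ since $\eta_\ell(\mu)\le\sup_i\abs{\mu^i}(\Omega)$, or by \eqref{eq:eta-ki-bnd2}). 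This gives \eqref{eq:eta-ell-lsc}.

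The heart of the argument is the identification $\lambda'\restrict\Omega = \abs{\mu}$ for every subsequential limit $\lambda'$ (which yields $\lambda = \abs{\mu}$ whenever $\abs{\mu^i}\weaktostar\lambda$ in $\Meas(\Omega)$). One always has $\abs{\mu} \le \lambda'\restrict\Omega$, so only the reverse inequality is at stake. Fix compacts $K \subset (K')^\circ \subset K' \subset \Omega$ with $\abs{\mu}(\BD K') = (\lambda'\restrict\Omega)(\BD K') = 0$. For $\ell$ large (so $\support f_\ell$ is small), one has $\int_{(K')^\circ} f_\ell(y - x)\d x \ge \chi_K(y)$, whence, by Fubini and $\abs{\mu^i(\tau_x f_\ell)} \le \abs{\mu^i}(\tau_x f_\ell)$,
\[
    \abs{\mu^i}(K) \le \int_{(K')^\circ} \abs{\mu^i}(\tau_x f_\ell) \d x \le \int_{(K')^\circ} \abs{\mu^i(\tau_x f_\ell)} \d x + \eta_\ell(\mu^i).
\]
Averaging this over $\ell = 1, \dots, N$ — valid once $i$ is large enough that $K_i \ge N$ — and using $\sum_{\ell=1}^N \eta_\ell(\mu^i) \le C$ from \eqref{eq:eta-ki-bnd1}, I get $\abs{\mu^i}(K) \le \tfrac1N \sum_{\ell=1}^N \int_{(K')^\circ} \abs{\mu^i(\tau_x f_\ell)} \d x + C/N$. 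Since $K'$ is compactly contained in $\Omega$, for $x \in (K')^\circ$ the quantity $\mu^i(\tau_x f_\ell)$ depends on $\mu^i$ only through its restriction to a fixed compact subset of $\Omega$, on which $\mu^i \weaktostar \mu$ in $\Meas(\R^m; \R^N)$; hence $\int_{(K')^\circ} \abs{\mu^i(\tau_x f_\ell)} \d x \to \int_{(K')^\circ} \abs{\mu(\tau_x f_\ell)} \d x$ as $i \to \infty$. Letting $i \to \infty$ with $N$ fixed, then $N \to \infty$, and using $\int_{(K')^\circ} \abs{\mu(\tau_x f_\ell)} \d x \to \abs{\mu}((K')^\circ) = \abs{\mu}(K')$ as $\ell \to \infty$ (standard mollification of measures, $\abs{\mu}(\BD K') = 0$), I obtain $(\lambda'\restrict\Omega)(K) = \lim_i \abs{\mu^i}(K) \le \abs{\mu}(K')$. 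Shrinking $K' \downarrow K$ gives $(\lambda'\restrict\Omega)(K) \le \abs{\mu}(K)$, and since such $K$ are cofinal among the compacts of $\Omega$, $\lambda'\restrict\Omega \le \abs{\mu}$.

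Finally, for \eqref{eq:eta-ell-limit}: when the weak* convergences hold in $\Meas(\R^m; \R^N)$ and $\Meas(\R^m)$ there is no loss of mass at $\BD\Omega$, and by the previous step $\abs{\mu^i} \weaktostar \abs{\mu}$ in $\Meas(\R^m)$; since $f_\ell$ is a bounded compactly supported kernel and all the measures live in a fixed compact set, the functions $x \mapsto \mu^i(\tau_x f_\ell)$ and $x \mapsto \abs{\mu^i}(\tau_x f_\ell)$ converge in $L^1(\R^m)$ to $x \mapsto \mu(\tau_x f_\ell)$, resp.\ $x \mapsto \abs{\mu}(\tau_x f_\ell)$, so $\eta_\ell(\mu^i) \to \eta_\ell(\mu)$; as the subsequence extraction was immaterial, the full sequence converges. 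I expect the main obstacle to be the $\lambda = \abs{\mu}$ step, precisely because only the partial sums $\sum_{\ell \le K_i} \eta_\ell(\mu^i)$ are controlled rather than the full $\eta(\mu^i)$: this is exactly what forces the Cesàro average, and it is the new ingredient beyond the scalar, full-sum treatment of \cite{tuomov-ap1}. A persistent secondary nuisance is keeping straight the three notions of weak* convergence — in $\Meas(\Omega;\cdot)$, $\Meas(\closure\Omega;\cdot)$, and $\Meas(\R^m;\cdot)$ — since boundary mass is invisible to the first.
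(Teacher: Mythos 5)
Your treatment of \eqref{eq:eta-ell-lsc} is essentially sound and close to the paper's (pointwise a.e.\ convergence of $x \mapsto \mu^i(\tau_x f_\ell)$ via negligibility of $\lambda'(E_{\tau_x f_\ell})$, then Fatou, then discarding boundary mass; the paper does the last step by the cruder but equivalent inequality $\eta_\ell(\bar\mu\restrict\Omega) \le \eta_\ell(\bar\mu)$). The genuine gap is in your central step, the identification $\lambda'\restrict\Omega = \abs{\mu}$. Your localised inequality $\abs{\mu^i}(K) \le \int_{(K')^\circ}\abs{\mu^i}(\tau_x f_\ell)\d x$ requires $\support f_\ell$ to be small compared with $\operatorname{dist}(K,\BD K')$, and your passage $\int_{(K')^\circ}\abs{\mu(\tau_x f_\ell)}\d x \to \abs{\mu}(K')$ as $\ell\to\infty$ requires the $f_\ell$ to concentrate at the origin like mollifiers. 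Neither property is part of Definition \ref{def:nested}: a normalised nested sequence need not have shrinking supports (e.g.\ $f_\ell \equiv f$ with $\nu_\ell = \delta_0$ is admissible), so your argument proves the lemma only under an additional, unstated approximate-identity hypothesis on the kernels. You have in effect imported the special structure of the motivating example $f_\ell = \rho_{2^{-\ell}}$ into a statement that is deliberately more general.

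The paper's proof shows why this extra structure is unnecessary, and it is worth seeing what replaces it. First, no localisation is ever performed: one works with the exact global Fubini identity $\int_{\R^m}\abs{\mu}(\tau_x f_\ell)\d x = \abs{\mu}(\R^m)$, which yields the algebraic identity
\begin{equation*}
    \abs{\mu}(\Omega)-\abs{\mu^i}(\Omega)
    = \eta_\ell(\mu)-\eta_\ell(\mu^i)
    + \int \abs{\mu(\tau_x f_\ell)}-\abs{\mu^i(\tau_x f_\ell)} \d x
\end{equation*}
valid for \emph{every} $\ell$, with the last integral tending to $0$ as $i\to\infty$ by dominated convergence. Second, the nesting hypothesis is used precisely once, through Lemma \ref{lemma:integral-norm-inside}, to prove the monotonicity $\eta_\ell(\cdot) \ge \eta_{\ell+1}(\cdot)$; combined with nonnegativity and the partial-sum bound \eqref{eq:eta-ki-bnd1} this gives $\eta_\ell(\mu^i) \le C/\ell$ for all $\ell \le K_i$, and with \eqref{eq:eta-ki-bnd2} it gives $\eta_\ell(\mu)\to 0$. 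This is the correct ``Cesàro'' exploitation of the partial sums --- applied to the $\eta_\ell$ themselves rather than to localised integrals --- and it makes the terms $\eta_\ell(\mu)-\eta_\ell(\mu^i)$ uniformly small for large $\ell$ and $i \ge i_\ell$, whence $\abs{\mu^i}(\Omega)\to\abs{\mu}(\Omega)$ and $\lambda=\abs{\mu}$. Your proof nowhere uses the monotonicity of $\ell\mapsto\eta_\ell$, which is the one consequence of the nesting that the argument actually needs; if you reinstate it, your localisation machinery (and its hidden hypotheses) can be discarded entirely. A secondary, fixable slip: you write $(\lambda'\restrict\Omega)(K)=\lim_i\abs{\mu^i}(K)$ while having imposed the null-boundary condition only on $K'$, not on $K$.
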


\begin{proof}
    Let us suppose first that $\mu^i \weaktostar \mu$ 
    and $\abs{\mu^i} \weaktostar \lambda$ weakly* in $\Meas(\R^m; \R^N)$, resp., $\Meas(\R^m)$
    rather than just within $\Omega$.
    We denote by $E_f$ the discontinuity set of $f$,
    while $S_f$ stands for the approximate discontinuity set.
    Fubini's theorem and the fact that
    $S_{f}$ is an $\L^m$-negligible Borel set, imply that
    $\int \lambda(S_{\tau_x f_\ell}) \d x=0$.
    This and the non-negativity of $\lambda$ show that $\lambda(S_{\tau_x f_\ell})=0$ for \ae $x \in \R^m$.
    Since by assumption $E_f \subset S_f$,
    it follows that $\lambda(E_{\tau_x f_\ell})=0$,
    so that %by Proposition \ref{proposition:weakbdry} we have
    (see, e.g., \cite[Proposition 1.62]{ambcosdal96})
    $\mu^i(\tau_x f_\ell) \to \mu(\tau_x f_\ell)$ for \ae $x \in \R^m$. 
    Likewise
    $\abs{\mu^i}(\tau_x f_\ell) \to \lambda(\tau_x f_\ell)$ for \ae $x \in \R^m$. 
    Since $\sup_i \abs{\mu^i}(\Omega) < \infty$,
    and $\Omega$ is bounded, an application of the dominated convergence
    theorem now yields
    \begin{equation}
        \label{eq:divergence-regul-average-conv}
        \lim_{i \to \infty}
        \int \abs{\mu^i(\tau_x f_\ell)} \d x 
        =
        \int \abs{\mu(\tau_x f_\ell)} \d x.
    \end{equation}
    By the lower semicontinuity of the total variation, recalling that
    \[
        \int \abs{\mu^i}(\tau_x f_\ell) \d x
        = \abs{\mu^i}(\R^m),
    \]
    this shows \eqref{eq:eta-ell-lsc} under the assumption that 
    the weak* convergences are in $\Meas(\R^m)$.

    Observe then that since $\{(f_\ell, \nu_\ell)\}_{\ell=0}^\infty$
    is a nested sequence of functions, $\{\eta_\ell(\mu)\}_{\ell=0}^\infty$ forms 
    a decreasing sequence (for any $\mu \in \Meas(\Omega)$).
    Indeed, as $f_\ell(x) = \int f_{\ell+1}(x - y) \d\nu_\ell(y)$
    and $\nu_\ell(\R^m)=1$ with $\nu_\ell \ge 0$, 
    using Lemma \ref{lemma:integral-norm-inside} we have
    \[
        \begin{split}
        \int \norm{\mu(\tau_x f_\ell)} \d x
        &
        = \int \adaptnorm{\int \mu(\tau_{x+y} f_{\ell+1}) \d\nu_\ell(y)} \d x
        \\
        &
        \le \int \int \norm{\mu(\tau_{x+y} f_{\ell+1})} \d\nu_\ell(y) \d x
        %\\
        %&
        %= \int \int \norm{\mu(\tau_{x+y} f_{\ell+1})} \d x \d\nu_\ell(y)
        = \int \norm{\mu(\tau_{x} f_{\ell+1})} \d x.
        \end{split}
    \]
    %after a change of variables in the last step to eliminate $y$.
    Referring to \eqref{eq:def-eta-n},
    it follows that
    \begin{equation}
        \label{eq:eta-decreasing}
        \eta_\ell(\mu) \ge \eta_{\ell+1}(\mu).
    \end{equation}
    To show $\lambda=\abs{\mu}$,
    that is $\abs{\mu^i} \weaktostar \abs{\mu}$,
    we only have to show $\abs{\mu^i}(\Omega) \to \abs{\mu}(\Omega)$.
    To see the latter, we choose an arbitrary $\epsilon > 0$, and write
    \begin{equation}
        \label{eq:divergence-regul-diff}
        \abs{\mu}(\Omega)-\abs{\mu^i}(\Omega)
        =
        \eta_\ell(\mu)-\eta_\ell(\mu^i)
        + \int \abs{\mu(\tau_x f_\ell)}-\abs{\mu^i(\tau_x f_\ell)} \d x.
    \end{equation}
    %Next we observe from lower semi-continuity
    %of $\eta$ and the bound $\sup_i \eta(\mu^i) =: K < \infty$
    %that $\eta(\mu) \le K$ as well.
    Since $\eta_\ell \ge 0$, and
    using \eqref{eq:eta-decreasing} in
    \eqref{eq:eta-ki-bnd1} and \eqref{eq:eta-ki-bnd2},
    we now observe that taking $\ell$ large enough and $i_\ell$ 
    such that $K_{i_\ell}\ge\ell$, we have
    \[
        \sup\{\eta_\ell(\mu), \eta_\ell(\mu^{i_\ell}), \eta_\ell(\mu^{i_\ell+1}), \eta_\ell(\mu^{i_\ell+2}), \ldots\} \le \epsilon.
    \]
    Employing this in \eqref{eq:divergence-regul-diff},
    we deduce for any large enough $\ell$ and all $i \ge i_\ell$ that
    \[
        \adaptabs{\abs{\mu}(\Omega)-\abs{\mu^i}(\Omega)}
        \le 2\epsilon +
            \adaptabs{\int \abs{\mu(\tau_x f_\ell)}-\abs{\mu^i(\tau_x f_\ell)} \d x}.
    \]
    The integral term tends to zero as $i \to \infty$ by 
    \eqref{eq:divergence-regul-average-conv}.
    Therefore
    \[
        \lim_{i \to \infty} \adaptabs{\abs{\mu^i}(\Omega)-\abs{\mu}(\Omega)}
            \le 2\epsilon.
    \]
    Since $\epsilon > 0$ was arbitrary, 
    we conclude that $\lambda=\abs{\mu}$.
    Moreover, \eqref{eq:eta-ell-limit} follows
    from \eqref{eq:divergence-regul-average-conv} now.
    %It now follows immediately from \eqref{eq:divergence-regul-average-conv} 
    %and \eqref{eq:def-eta-n} that
    %$\eta_\ell(\mu^i) \to \eta_\ell(\mu)$, showing that part of the
    %claim of the lemma.
    This concludes the proof of the lemma under the assumption 
    that the weak* convergences are in $\Meas(\R^m)$.
    
    If this assumption does not hold, we may still
    switch to a subsequence for which
    $\mu^{i_k} \weaktostar \bar \mu$
    weakly* in $\Meas(\R^m; \R^N)$
    for some $\bar \mu \in \Meas(\R^m; \R^N)$.
    But, since $\Omega$ is open, necessarily
    $\bar \mu \restrict \Omega = \mu$.
    Moreover, an application of the triangle inequality gives
    \[
        \eta_\ell(\mu)=\eta_\ell(\bar \mu \restrict \Omega)
        \le \eta_\ell(\bar \mu) \le \liminf_{k \to \infty} \eta_\ell(\mu^{i_k}).
    \]
    As this bound holds for every subsequence, we deduce 
    \eqref{eq:eta-ell-lsc}.
    Likewise, we have $\abs{\mu^{i_k}} \weaktostar \bar \lambda$
    weakly* in $\Meas(\R^m)$ for a common subsequence.
    Again $\bar \lambda \restrict \Omega = \lambda$.
    Since by the previous paragraphs $\abs{\bar\mu}=\bar\lambda$,
    this implies $\lambda=\abs{\mu}$.
    %and the claim \eqref{eq:eta-ell-limit} also holds as before.
\end{proof}

\begin{theorem}
    \label{thm:eta}
    Let $\Omega \subset \R^m$ be an open and bounded set,
    and $\{(f_\ell, \nu_\ell)\}_{\ell=0}^\infty$ a normalised nested
    sequence of functions. 
    Suppose the sequence $\{\mu^i\}_{i=0}^\infty$ in $\Meas(\Omega; \R^N)$
    converges weakly* to a measure $\mu \in \Meas(\Omega; \R^N)$,
    and satisfies $\sup_i \abs{\mu^i}(\Omega) + \eta(\mu^i) < \infty$.
    If also $\abs{\mu^i} \weaktostar \lambda$, then $\lambda=\abs{\mu}$.
    Moreover, each of the functionals 
    $\eta$ and $\eta_\ell$, ($\ell=0,1,2,\ldots$),
    is lower-semicontinuous with respect to the weak* 
    convergence of $\{\mu^i\}_{i=0}^\infty$.
    %Provided that the weak* convergences hold in $\Meas(\R^m; \R^N)$, resp., $\Meas(\R^m)$,
    %then also $\eta_\ell(\mu^i) \to \eta_\ell(\mu)$, ($\ell=0,1,2,\ldots$).
\end{theorem}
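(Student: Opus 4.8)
The plan is to read Theorem \ref{thm:eta} off from Lemma \ref{lemma:eta-ki} applied with the constant choice $K_i := i$, which trivially satisfies $\lim_{i\to\infty} K_i = \infty$. First I would verify hypothesis \eqref{eq:eta-ki-bnd1} of that lemma. Since each $\eta_\ell$ is non-negative (as follows from $f_\ell \ge 0$ together with Lemma \ref{lemma:integral-norm-inside}), we have $\sum_{\ell=1}^{K_i} \eta_\ell(\mu^i) \le \eta(\mu^i)$, so the standing hypothesis $\sup_i\bigl(\abs{\mu^i}(\Omega) + \eta(\mu^i)\bigr) < \infty$ immediately gives \eqref{eq:eta-ki-bnd1}.

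The one point requiring a little care is hypothesis \eqref{eq:eta-ki-bnd2}, i.e.\ $\eta(\mu) < \infty$, since it is assumed in the statement of Lemma \ref{lemma:eta-ki} rather than derived there. Here I would invoke the internal structure of that lemma's proof: the lower semicontinuity estimate \eqref{eq:eta-ell-lsc} is obtained using only the weak* convergences, the bound $\sup_i \abs{\mu^i}(\Omega) < \infty$, Fubini's theorem, and the dominated-convergence step \eqref{eq:divergence-regul-average-conv} --- in particular \emph{before} \eqref{eq:eta-ki-bnd2} is ever used. Hence $\eta_\ell(\mu) \le \liminf_{i\to\infty}\eta_\ell(\mu^i)$ holds for every $\ell$, and summing over $\ell$ with Fatou's lemma for the counting measure on $\N$ (legitimate by non-negativity of the $\eta_\ell$) yields
\[
    \eta(\mu)
    = \sum_{\ell=0}^\infty \eta_\ell(\mu)
    \le \sum_{\ell=0}^\infty \liminf_{i\to\infty} \eta_\ell(\mu^i)
    \le \liminf_{i\to\infty} \sum_{\ell=0}^\infty \eta_\ell(\mu^i)
    = \liminf_{i\to\infty} \eta(\mu^i) < \infty.
\]
With \eqref{eq:eta-ki-bnd1} and \eqref{eq:eta-ki-bnd2} both in force, Lemma \ref{lemma:eta-ki} now directly delivers two of the three assertions of the theorem: the weak* lower semicontinuity of each $\eta_\ell$ (this is precisely \eqref{eq:eta-ell-lsc}), and, whenever $\abs{\mu^i}\weaktostar\lambda$, the identification $\lambda = \abs{\mu}$.

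It remains to record the weak* lower semicontinuity of $\eta$ itself, which I would obtain by the very chain of inequalities displayed above: combining $\eta_\ell(\mu) \le \liminf_{i\to\infty}\eta_\ell(\mu^i)$ with Fatou's lemma on $\N$ gives $\eta(\mu) \le \liminf_{i\to\infty}\eta(\mu^i)$. I do not expect a genuine obstacle anywhere in this argument; the only subtlety worth highlighting --- and the reason for presenting the steps in this order --- is the apparent circularity around \eqref{eq:eta-ki-bnd2}, which dissolves once one observes that \eqref{eq:eta-ell-lsc} is established within the proof of Lemma \ref{lemma:eta-ki} without any recourse to \eqref{eq:eta-ki-bnd2}.
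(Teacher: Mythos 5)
Your proof is correct and follows the paper's own route exactly: Theorem \ref{thm:eta} is deduced from Lemma \ref{lemma:eta-ki} with the choice $K_i = i$, and the lower semicontinuity of $\eta$ itself is obtained from \eqref{eq:eta-ell-lsc} together with Fatou's lemma on the counting measure. Your additional care in first establishing the hypothesis $\eta(\mu) < \infty$ of \eqref{eq:eta-ki-bnd2} --- by observing that \eqref{eq:eta-ell-lsc} is proved inside Lemma \ref{lemma:eta-ki} without recourse to that hypothesis, and then summing --- patches a step the paper dismisses as ``immediate'', and is a genuine (if minor) improvement in rigour rather than a departure in method.
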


\begin{proof}
    Only lower semicontinuity of $\eta$ demands a proof; the rest is immediate
    from Lemma \ref{lemma:eta-ki} with $K_i=i$, for instance.
    Also lower semicontinuity of $\eta$ follows simply from 
    Fatou's lemma and \eqref{eq:eta-ell-lsc}.
\end{proof}

\section{Proof of Theorem \ref{thm:compact}}
\label{app:thm:compact}

We prove here our result on the remedy by resorting to compact operators.

\begin{lemma} 
    \label{lemma:tv-compact-k}
    Let $\Omega$, $\Omega'$ be open domains,
    and suppose $K: \BVspace(\Omega) \to L^1(\Omega'; \R^m)$ is 
    a compact linear operator. Let $\phi: \nonnegR \to \nonnegR$ be
    lower semicontinuous. Then
    \[
        F(u) \defeq \int_{\Omega'} \phi(\norm{K u(x)}) \d x
    \]
    is lower semicontinuous with respect to weak* convergence
    in $\BVspace(\Omega)$.
\end{lemma}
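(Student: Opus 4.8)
The plan is to prove sequential lower semicontinuity directly from the definitions. Let $u^i \weaktostar u$ in $\BVspace(\Omega)$; I want to show $F(u) \le \liminf_{i} F(u^i)$. Passing to a subsequence without relabelling, I may assume that $F(u^i) \to L \defeq \liminf_i F(u^i)$, and that $L < \infty$ (otherwise there is nothing to prove). The first routine observation is that $\{u^i\}_i$ is bounded in $\BVspace(\Omega)$: indeed $\sup_i \norm{u^i}_{L^1(\Omega)} < \infty$ because $u^i \to u$ in $L^1(\Omega)$, and $\sup_i \abs{Du^i}(\Omega) < \infty$ by the uniform boundedness principle, since $Du^i \weaktostar Du$ in $\Meas(\Omega; \R^m) = C_0(\Omega; \R^m)^*$.

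Next I would exploit compactness of $K$: the bounded sequence $\{u^i\}_i$ is mapped to a relatively compact subset of $L^1(\Omega'; \R^m)$, so after extracting a further subsequence there is $v \in L^1(\Omega'; \R^m)$ with $Ku^i \to v$ strongly in $L^1(\Omega'; \R^m)$, and then, along yet a further subsequence, $\norm{Ku^i(x)} \to \norm{v(x)}$ for $\L^m$-almost every $x \in \Omega'$. The decisive step is to identify $v = Ku$. This is where the structure of weak* convergence in $\BVspace(\Omega)$ — strong $L^1$ convergence of the functions together with weak* convergence of the gradient measures — has to be matched against the action of $K$: for the compact operators of interest here, in particular $D_\epsilon u = \rho_\epsilon * Du$ from Theorem \ref{thm:compact} (for which $Ku^i(x) = \int \rho_\epsilon(x-y)\d Du^i(y) \to \int \rho_\epsilon(x-y)\d Du(y) = Ku(x)$ pointwise, since $\rho_\epsilon(x-\freevar) \in C_0(\Omega)$), one checks that $Ku^i \to Ku$ at least in the distributional sense on $\Omega'$; a strong $L^1$-limit that is simultaneously a distributional limit is unique, so $v = Ku$.

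With $v = Ku$ established, the conclusion follows from Fatou's lemma. Since $\phi$ is nonnegative and lower semicontinuous and $y \mapsto \norm{y}$ is continuous, $\phi(\norm{Ku(x)}) = \phi(\norm{v(x)}) \le \liminf_{i \to \infty} \phi(\norm{Ku^i(x)})$ for $\L^m$-a.e.\ $x \in \Omega'$, whence
\[
    F(u) = \int_{\Omega'} \phi(\norm{Ku(x)}) \d x
    \le \int_{\Omega'} \liminf_{i \to \infty}\phi(\norm{Ku^i(x)}) \d x
    \le \liminf_{i \to \infty} \int_{\Omega'} \phi(\norm{Ku^i(x)}) \d x
    = L .
\]
Hence $F(u) \le \liminf_i F(u^i)$, as claimed.

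The main obstacle is precisely the identification $v = Ku$: compactness of $K$ only delivers a strongly convergent subsequence of $\{Ku^i\}$ with an a priori unknown limit, and pinning that limit down requires the interplay between $K$ and the two components of weak* convergence in $\BVspace(\Omega)$. Everything else — boundedness of weak*-convergent sequences, extraction of an a.e.\ convergent subsequence, and the Fatou estimate using only lower semicontinuity and nonnegativity of $\phi$ — is standard.
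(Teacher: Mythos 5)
Your proof is correct and follows essentially the same route as the paper's: boundedness of the weak*-convergent sequence, compactness of $K$ to extract a strongly $L^1$-convergent subsequence, identification of the limit with $Ku$, and then lower semicontinuity of the integral functional under strong convergence (which you establish directly via a.e.\ convergence and Fatou, where the paper instead cites a result of Fonseca--Leoni). You are in fact more careful than the paper on the one delicate point, the identification $v=Ku$, which the paper dispatches with the phrase ``by the continuity of $K$'' even though norm continuity alone does not convert weak* convergence in $\BVspace(\Omega)$ into convergence of the images; your resolution via distributional convergence for the concrete operator $D_\epsilon$ is exactly what is needed in the application.
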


\begin{proof}
    If $\{u^i\}_{i=1}^\infty \subset \BVspace(\Omega)$ converges
    weakly* to $u \in \BVspace(\Omega)$, then it is bounded in $\BVspace(\Omega)$. 
    Therefore, by the compactness of $K$, the 
    sequence $\{Ku^i\}_{i=1}^\infty$ has a subsequence,
    unrelabelled, which converges strongly to some $v \in L^1(\Omega'; \R^m)$.
    By the continuity of $K$, which follows from compactness,
    necessarily $v=Ku$.
    Now \cite[Theorem 5.9]{fonseca2007mmc} shows that
    \[
        F(u) \le \lim_{i \to \infty} F(u^i).
        \qedhere
    \]
\end{proof}

\begin{lemma}
    \label{lemma:meas-moll-conv}
    Let $\rho \in C_c^\infty(\R^n)$ and $\Omega \subset \R^n$ be bounded and open.
    Suppose $\mu^i \to \mu$ weakly* in $\Meas(\Omega; \R^m)$.
    Then $\rho * \mu^i \to \rho * \mu$ strongly in $L^\infty(\R^n)$.
\end{lemma}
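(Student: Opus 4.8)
The plan is to obtain the uniform convergence from three classical tools: the Banach--Steinhaus theorem, to bound the total variations $\abs{\mu^i}(\Omega)$ uniformly; the Arzel\`a--Ascoli theorem, to recognise the translates of $\rho$ as a relatively compact family of test functions; and the elementary fact that a norm-bounded, weak*-convergent sequence in a dual space converges uniformly on norm-compact subsets of the predual.

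First I would record the preliminaries. Weak* convergence of $\{\mu^i\}_{i=1}^\infty$ in $\Meas(\Omega;\R^m)$ forces $C \defeq \sup_i \abs{\mu^i}(\Omega) < \infty$, and $\abs{\mu}(\Omega) \le C$. Writing $(\rho * \nu)(x) \defeq \int_\Omega \rho(x-y)\d\nu(y)$, each of $\rho*\mu^i$ and $\rho*\mu$ is continuous with $\norm{\rho*\nu}_{L^\infty(\R^n)} \le \norm{\rho}_{L^\infty}\abs{\nu}(\Omega)$, and, since $\Omega$ is bounded, all of them vanish off the fixed compact set $K_0 \defeq \closure\Omega + \supp\rho$; in particular they lie in $L^\infty(\R^n)$ and the statement is meaningful. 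Next I would introduce, for $x \in \R^n$, the function $\rho_x(y) \defeq \rho(x-y)$ and regard $\mathcal K \defeq \{\rho_x|_{\closure\Omega} \mid x \in \R^n\}$ as a subset of $C(\closure\Omega)$. It is bounded by $\norm{\rho}_{L^\infty}$ and uniformly equicontinuous, since $\abs{\rho_x(y) - \rho_x(y')} \le \omega_\rho(\abs{y-y'})$ with $\omega_\rho$ the modulus of continuity of the uniformly continuous $\rho$; hence $\mathcal K$ is relatively compact in $C(\closure\Omega)$. Putting $\nu^i \defeq \mu^i - \mu \weaktostar 0$, the uniform-on-compacta principle, applied to each scalar component $\nu^i_j$, then gives
\[
    \sup_{x \in \R^n} \adaptabs{(\rho*\mu^i_j)(x) - (\rho*\mu_j)(x)}
    = \sup_{x \in \R^n} \adaptabs{\nu^i_j(\rho_x)}
    \le \sup_{\psi \in \mathcal K} \adaptabs{\nu^i_j(\psi)}
    \xrightarrow[i \to \infty]{} 0 ,
\]
for $j = 1, \ldots, m$, and therefore $\norm{\rho*\mu^i - \rho*\mu}_{L^\infty(\R^n)} \to 0$, which is the claim.

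The step requiring real care -- and the one I expect to be the main obstacle -- is the admissibility of the test functions $\rho_x$ against $\mu^i - \mu$ for $x$ near $\partial\Omega$: the restriction $\rho_x|_\Omega$ need not vanish on $\partial\Omega$, so the pairing $\nu^i_j(\rho_x)$ is only immediately meaningful under the convention that the weak* convergence holds against $C(\closure\Omega;\R^m)$, equivalently that the trivial extensions of the $\mu^i$ converge weakly* in $\Meas(\R^n;\R^m)$; I expect this to be the intended reading, or else boundary mass to be controlled separately. If one wishes to sidestep the uniform-on-compacta principle, the same conclusion follows by a subsequence argument: the family $\{\rho*\mu^i\}$ is equibounded, equicontinuous and supported in $K_0$, hence relatively compact in $C(\R^n)$ by Arzel\`a--Ascoli; any uniform cluster point $g$ satisfies $\int_{\R^n} g\psi \d x = \int_{\R^n}(\rho*\mu)\psi \d x$ for all $\psi \in C_c^\infty(\R^n)$ by Fubini and the weak* convergence, so $g = \rho*\mu$, and therefore the whole sequence converges uniformly to $\rho*\mu$.
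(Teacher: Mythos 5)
Your proposal is correct, and its primary argument takes a genuinely different route from the paper's. The paper applies Arzel\`a--Ascoli to the image sequence $\{\rho*\mu^i\}_{i=1}^\infty$: it bounds $\rho*\mu^i$ and $\grad(\rho*\mu^i)=(\grad\rho)*\mu^i$ in $L^\infty$ by $\abs{\mu^i}(\Omega)$ times a constant, extracts a uniformly convergent subsequence, and identifies the limit as $\rho*\mu$ by testing against $L^1$ functions and using Fubini --- which is essentially your fallback argument in the last sentence. Your main argument instead applies Arzel\`a--Ascoli on the \emph{predual} side, to the family of translates $\{\rho_x|_{\closure\Omega}\}$, and invokes uniform convergence of a bounded weak*-convergent sequence on norm-compact sets; this avoids the subsequence extraction and the limit-identification step entirely, and gives the uniform convergence in one stroke. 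The two proofs are of comparable length; yours isolates more cleanly where equicontinuity of $\rho$ enters. The caveat you raise about test functions near $\partial\Omega$ is genuine and not specific to your approach: if weak* convergence in $\Meas(\Omega;\R^m)$ is understood against $C_0(\Omega;\R^m)$, the statement can fail when mass concentrates on $\partial\Omega$ (take $\Omega=(0,1)$ and $\mu^i=\delta_{1/i}\weaktostar 0$, yet $\rho*\mu^i\to\rho\ne 0$ uniformly), and the paper's own proof makes the same implicit assumption when it pairs $\mu^i$ with $\phi*\rho$, which need not vanish on $\partial\Omega$. So your reading --- that the convergence is meant against $C(\closure\Omega;\R^m)$, equivalently that the zero-extensions converge weakly* in $\Meas(\R^n;\R^m)$ --- is the one under which both proofs, and the lemma itself, are valid.
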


\begin{proof}
    Let $K$ be a compact set such that $\Omega + \support \rho \subset K$.
    We have
    \[
        \norm{\rho * \mu^i}_{L^\infty(K; \R^m)} \le \norm{\rho}_{L^\infty(\R^n)} \abs{\mu^i}(\Omega)
    \]
    and
    \[
        \norm{\grad(\rho * \mu^i)}_{L^\infty(K; \R^n \times \R^m)} 
        =
        \norm{(\grad \rho) * \mu^i}_{L^\infty(K; \R^n \times \R^m)} 
        \le \norm{\grad \rho}_{L^\infty(\R^n; \R^n)} \abs{\mu^i}(\Omega).
    \]
    Thus $\{ \rho * \mu^i\}_{i=1}^\infty$ is uniformly bounded and equicontinuous.
    It follows from the Arzel{\'a}-Ascoli theorem that $\rho * \mu^i$ converges 
    uniformly (i.e., in $L^\infty(K; \R^m)$) to some $v \in C_c(K; \R^m)$.

    Let $\phi \in L^1(K; \R^m)$. Then by the weak* convergence we have
    \[
        \int_{\R^n} \iprod{\phi(x)}{(\rho * \mu^i)(x)} \d x 
        =
        \int_{\R^n} \iprod{(\phi * \rho)(x)}{\d \mu^i(x)}
        \to
        \int_{\R^n} \iprod{(\phi * \rho)(x)}{\d \mu(x)}
        =
        \int_{\R^n} \iprod{\phi(x)}{(\rho * \mu)(x)} \d x,
    \]
    as $i\to\infty$,
    so that $\rho * \mu^i \to \rho * \mu$ weakly in $L^\infty(K; \R^m)$.
    Then it holds that $\rho * \mu = v$, and the convergence is strong.
    Because $\support w \subset K$ for $w=\rho * \mu$ or $w=\rho * \mu^i$,
    the claim follows.
\end{proof}

\begin{proof}[Proof of Theorem \ref{thm:compact}]
    Suppose $\{u^i\}_{i=1}^\infty$ is a bounded sequence in $\BVspace(\Omega)$.
    %Let $\{w^i\}_{i=1}^\infty$ be the corresponding extensions by
    %zero in $\BVspace(\R^n)$. 
    %We may extract a subsequence, 
    %unrelabelled, such that $\{w^i\}_{i=1}^\infty$ is weakly*
    %convergent in $\BVspace(\R^n)$ to some $w \in \BVspace(\R^n)$.
    We may extract a subsequence, 
    unrelabelled, such that $\{u^i\}_{i=1}^\infty$ is weakly*
    convergent in $\BVspace(\Omega)$ to some $u \in \BVspace(\Omega)$.
    By Lemma \ref{lemma:meas-moll-conv}, then
    \[
        D_\epsilon u^i \to D_\epsilon u \quad \text{strongly in } L^\infty(\Omega; \R^m).
    \]
    Weak* lower semicontinuity now follows from Lemma \ref{lemma:tv-compact-k}.
    
    Let then $u \in C^1(\Omega)$. The estimate 
    \begin{equation}
        \label{eq:tvphic-epsilon-bound}
        \limsup_{\epsilon\downto 0} \TVphic[\phi,\epsilon](u) \le \widetilde \TVphic(u),
        \quad (u \in C^1(\Omega)),
    \end{equation}
    follows from the proof of Lemma \ref{lemma:tvphic}.
    By subadditivity we also have
    \[
        \widetilde \TVphic(u) - \TVphic[\phi,\epsilon](u)
        \le
        \int_\Omega \phi(\norm{(\rho_\epsilon * \grad u)(x) - \grad u(x)}) \d x.
    \]
    Writing $g_\epsilon(x) \defeq \norm{(\rho_\epsilon * \grad u)(x) - \grad u(x)}$,
    we have $g_\epsilon(x) \to 0$ in $L^1(\R^n)$. 
    We may again proceed as in the proof of Lemma \ref{lemma:tvphic}
    %\TODO{General lemma for the AC part!}
    to show
    \[
        \limsup_{\epsilon \downto 0}\bigl(\widetilde \TVphic(u) - \TVphic[\phi,\epsilon](u)\bigr) \le 0.
    \]
    This together with \eqref{eq:tvphic-epsilon-bound}
    proves \eqref{eq:tvphic-epsilon-conv}.
\end{proof}

\section{Proof of Theorem \ref{thm:sbveta}}
\label{app:thm:sbveta}

We now prove our result on the remedy based the SBV space.

\begin{proposition}
    \label{prop:remedy2-base}
    %Suppose $g^i \L^n \restrict \Omega \weakto g \L^n \restrict \Omega$ weakly* in $\Meas(\Omega; \R^m)$, 
    %and that $\sup_i \bar \eta(g^i) < \infty$.
    %Then $g^i \to g$ strongly in $L^1(\Omega; \R^m)$.
    Let $\bar\eta$ and $p \in (1, \infty)$ be as in \eqref{eq:bareta}.
    Suppose $g^i \weakto g$ weakly in $L^p(\Omega; \R^m)$, 
    and that $\sup_i \bar \eta(g^i) < \infty$.
    Then $g^i \to g$ strongly in $L^p(\Omega; \R^m)$.
    %\TODO{Does this extend to measures strictly?}
\end{proposition}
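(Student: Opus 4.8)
The plan is to reduce the statement to a convergence of $L^p$-norms and then invoke uniform convexity. Since $L^p(\Omega;\R^m)$ is uniformly convex for $p \in (1,\infty)$, it enjoys the Radon--Riesz (Kadets--Klee) property: a sequence converging weakly and also in norm converges strongly. As $g^i \weakto g$ is assumed, and weak lower semicontinuity of the norm always gives $\norm{g}_{L^p(\R^n;\R^m)} \le \liminf_i \norm{g^i}_{L^p(\R^n;\R^m)}$, it therefore suffices to prove the reverse inequality $\limsup_i \norm{g^i}_{L^p} \le \norm{g}_{L^p}$.

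Two observations make this possible. First, by Young's inequality and $\norm{\rho_\epsilon}_{L^1}=1$ we have $\norm{h*\rho_\epsilon}_{L^p} \le \norm{h}_{L^p}$, so each $\bar\eta_\ell \ge 0$; moreover, writing $\rho_{\epsilon_\ell}=\rho_{\epsilon_{\ell+1}}*\rho_{\epsilon_\ell-\epsilon_{\ell+1}}$ via the semigroup property of the mollifier family (recall $\epsilon_\ell\downto0$), the same inequality gives $\norm{h*\rho_{\epsilon_\ell}}_{L^p}\le\norm{h*\rho_{\epsilon_{\ell+1}}}_{L^p}$ and hence $\bar\eta_\ell(h)\ge\bar\eta_{\ell+1}(h)$; that is, $\ell\mapsto\bar\eta_\ell(h)$ is nonincreasing, exactly as in \eqref{eq:eta-decreasing}. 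Combining this with $\sup_i\sum_{\ell}\bar\eta_\ell(g^i)=:C<\infty$ (which follows from $\sup_i\bar\eta(g^i)<\infty$ and $\eta_0>0$) yields $\ell\,\bar\eta_\ell(g^i)\le\sum_{k=1}^{\ell}\bar\eta_k(g^i)\le C$, so $\bar\eta_\ell(g^i)\le C/\ell$ \emph{uniformly in $i$}. Second, extending each $g^i$ by zero to $\R^n$, the sequence $\{g^i\}$ is bounded in $L^p$, so for every fixed $\ell$ the mollifications $\{g^i*\rho_{\epsilon_\ell}\}_i$ are uniformly bounded and equicontinuous on a fixed compact $K\supset\Omega$ (and supported there), while $(g^i*\rho_{\epsilon_\ell})(x)=\iprod{g^i}{\rho_{\epsilon_\ell}(x-\freevar)}\to(g*\rho_{\epsilon_\ell})(x)$ pointwise; the Arzel\'a--Ascoli theorem then forces $g^i*\rho_{\epsilon_\ell}\to g*\rho_{\epsilon_\ell}$ strongly in $L^p$, so $\norm{g^i*\rho_{\epsilon_\ell}}_{L^p}\to\norm{g*\rho_{\epsilon_\ell}}_{L^p}\le\norm{g}_{L^p}$. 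This is the $L^p$-counterpart of Lemma \ref{lemma:meas-moll-conv}.

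Now fix $\epsilon>0$, choose $\ell$ with $C/\ell\le\epsilon$, and write $\norm{g^i}_{L^p}=\norm{g^i*\rho_{\epsilon_\ell}}_{L^p}+\bar\eta_\ell(g^i)\le\norm{g^i*\rho_{\epsilon_\ell}}_{L^p}+\epsilon$. Letting $i\to\infty$ gives $\limsup_i\norm{g^i}_{L^p}\le\norm{g}_{L^p}+\epsilon$, and since $\epsilon>0$ was arbitrary, $\limsup_i\norm{g^i}_{L^p}\le\norm{g}_{L^p}$. Together with the weak lower semicontinuity of the norm noted above, the norms converge, and uniform convexity delivers $g^i\to g$ strongly in $L^p(\Omega;\R^m)$.

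I expect the only genuine work to be the second observation — that convolution against a fixed mollifier, followed by restriction to a bounded set, sends weakly convergent $L^p$-sequences to strongly convergent ones — but this is classical and parallels Lemma \ref{lemma:meas-moll-conv} almost verbatim. The rest is bookkeeping with Young's inequality, the monotone partial sums of $\bar\eta$, and the Radon--Riesz property of $L^p$.
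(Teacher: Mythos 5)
Your proof is correct and follows essentially the same route as the paper's: monotonicity of $\ell \mapsto \bar\eta_\ell$ via the semigroup property, an Arzel\`a--Ascoli compactness argument for the mollified sequence at fixed $\ell$, convergence of the $L^p$-norms, and the Radon--Riesz property. Your only (harmless) streamlining is the explicit bound $\bar\eta_\ell(g^i) \le C/\ell$ and the use of $\norm{g*\rho_{\epsilon_\ell}}_{L^p} \le \norm{g}_{L^p}$ in the final estimate, which lets you bypass the paper's appeal to weak lower semicontinuity of $\bar\eta$ at the limit $g$.
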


\begin{proof}
    Let $K$ be a compact set such that $\Omega + \support \rho_1 \subset K$,
    and set $M \defeq \sup_i \bar \eta(g^i)$. 
    We observe that $\bar \eta$ is lower semicontinuous with
    respect to weak convergence in $L^p$. Therefore $\eta(g) \le M$.
    As in the proof of Lemma \ref{lemma:eta-ki}, we observe that
    $\bar\eta_\ell \ge \bar\eta_{\ell+1}$, from which it again 
    follows that
    \begin{equation}
        \label{eq:eta-bar-univconv}
        %\norm{h - h * \rho_{\epsilon_\ell}}_{L^1(K)} \to 0
        \bar\eta_\ell(h) \to 0
        \quad
        \text{ as } \ell \to \infty
        \quad
        \text{ uniformly for } h \in \{g, g^1, g^2, g^3, \ldots\}.
    \end{equation}
    We then observe that as in Lemma \ref{lemma:meas-moll-conv}, we have
    \begin{equation}
        \label{eq:eta-bar-strongconv}
        \rho_{\epsilon_\ell} * g^i \to \rho_{\epsilon_\ell} * g
        \quad \text{strongly in } L^\infty(K; \R^m)
    \end{equation}
    for each $\ell \in \{1,2,3,\ldots\}$.
    Thus, it holds that
    \[
        \norm{g^i}_{L^p(\Omega; \R^m)}
        -
        \norm{g}_{L^p(\Omega; \R^m)}
        \le
        \eta_\ell(g^i)
        -
        \eta_\ell(g)
        +
        \norm{\rho_{\epsilon_\ell} * g^i-\rho_{\epsilon_\ell} * g}_{L^p(K; \R^m)}.
    \]
    Given $\delta > 0$, thanks to \eqref{eq:eta-bar-univconv}, we may find $\ell$ such that
    \[
        \norm{g^i}_{L^p(\Omega; \R^m)}
        -
        \norm{g}_{L^p(\Omega; \R^m)}
        \le
        2 \delta
        +
        \norm{\rho_{\epsilon_\ell} * g^i-\rho_{\epsilon_\ell} * g}_{L^p(K; \R^m)}.
    \]
    With $\ell$ fixed, we thus get by \eqref{eq:eta-bar-strongconv} that
    \[
        \limsup_{i \to \infty}
        \norm{g^i}_{L^p(\Omega; \R^m)}
        -
        \norm{g}_{L^p(\Omega; \R^m)}
        \le
        2 \delta.
    \]
    Since $\delta>0$ was arbitrary, and using weak lower semicontinuity of $\|\cdot\|_{L^p(\Omega; \R^m)}$, we deduce
    \[
        \lim_{i \to \infty} \norm{g^i}_{L^p(\Omega; \R^m)}
        =
        \norm{g}_{L^p(\Omega; \R^m)}.
    \]
    But for $p \in (1, \infty)$, strict convergence implies strong 
    convergence \cite{fonseca2007mmc}. This concludes the proof.
\end{proof}

\begin{proof}[Proof of Theorem \ref{thm:sbveta}]
    If $\{u^i\}_{i=1}^\infty$ is weakly* convergent in $\BVspace(\Omega)$,
    we may -- without loss of generality -- assume that $\sup_i F(u^i) < \infty$, for otherwise
    lower semicontinuity is obvious. 
    Then by the SBV Compactness Theorem \ref{thm:sbv-compactness},
    the convergences \eqref{eq:sbv-conv-1}--\eqref{eq:sbv-conv-4}
    hold for a subsequence. This also shows weak* convergence, 
    if it did not hold originally. Moreover, by the same theorem,
    $u \mapsto \int_{J_u} \psi(\theta_u(x)) \d \H^{m-1}(x)$ is
    lower semicontinuous with respect to this convergence.
    By \eqref{eq:sbveta-p-bound} we may further assume $\{\grad u^i\}_{i=1}^\infty$ 
    weakly convergent in $L^p(\Omega; \R^m)$. Proposition \ref{prop:remedy2-base}
    now shows strong convergence of $\{\grad u^i\}_{i=1}^\infty$ 
    in $L^p(\Omega; \R^m)$. The functional $F$ is lower semicontinuous
    with respect to all of these convergences, which yields lower 
    semicontinuity with respect to weak* convergence in $\BVspace(\Omega)$.
\end{proof}

\end{document}